\documentclass{tac}

\setlength{\textwidth}{6.5in}
\setlength{\textheight}{8.5in}
\setlength{\footskip}{0.8in}
\setlength{\unitlength}{1mm}
\setlength{\evensidemargin}{0pt}
\setlength{\oddsidemargin}{0pt}
\setlength{\topmargin}{-0.5in}  	

\usepackage{enumerate,xspace}
\usepackage{amsmath,xspace,amssymb}
\usepackage[dvips]{graphics}
\usepackage{proof}
\usepackage{latexsym}  
\usepackage{euscript}  

\input xy
\xyoption{all}
\xyoption{2cell}
\UseAllTwocells
\title{Connections \\ in Tangent Categories}
\author{J.R.B. Cockett and G.S.H. Cruttwell}
\thanks{Partially supported by NSERC Discovery Grants.}
\copyrightyear{2017}

\bibliographystyle{plain}

%
%

\newtheorem{observation}{Remark}[section]
\newtheorem{lemma}[observation]{Lemma}  
\newtheorem{theorem}[observation]{Theorem}
\newtheorem{definition}[observation]{Definition}
\newtheorem{example}[observation]{Example}

\newtheorem{proposition}[observation]{Proposition} 
\newtheorem{corollary}[observation]{Corollary} 
 



\newcommand{\x}{\times}
\newcommand{\<}{\langle}
\renewcommand{\>}{\rangle}




\newcommand{\R}{\ensuremath{\mathbb R}\xspace}
\newcommand{\T}{\ensuremath{\mathbb T}\xspace}
\newcommand{\X}{\ensuremath{\mathbb X}\xspace}


\newcommand{\p}{\pi}
\newcommand{\zq}{0_{\sf q}}
\newcommand{\pq}{+_{\sf q}}
\newcommand{\pqone}{+_{\sf q1}}


\newcommand\nats{\hbox{$I \kern - .38em N$}} 
\newcommand\ints{\hbox{$Z \kern - .65em Z$}} 


\makeatletter


\newdimen\w@dth

\def\setw@dth#1#2{\setbox\z@\hbox{\scriptsize $#1$}\w@dth=\wd\z@
\setbox\@ne\hbox{\scriptsize $#2$}\ifnum\w@dth<\wd\@ne \w@dth=\wd\@ne \fi
\advance\w@dth by 1.2em}

\def\t@^#1_#2{\allowbreak\def\n@one{#1}\def\n@two{#2}\mathrel
{\setw@dth{#1}{#2}
\mathop{\hbox to \w@dth{\rightarrowfill}}\limits
\ifx\n@one\empty\else ^{\box\z@}\fi
\ifx\n@two\empty\else _{\box\@ne}\fi}}
\def\t@@^#1{\@ifnextchar_ {\t@^{#1}}{\t@^{#1}_{}}}

\def\t@left^#1_#2{\def\n@one{#1}\def\n@two{#2}\mathrel{\setw@dth{#1}{#2}
\mathop{\hbox to \w@dth{\leftarrowfill}}\limits
\ifx\n@one\empty\else ^{\box\z@}\fi
\ifx\n@two\empty\else _{\box\@ne}\fi}}
\def\t@@left^#1{\@ifnextchar_ {\t@left^{#1}}{\t@left^{#1}_{}}}

\def\two@^#1_#2{\def\n@one{#1}\def\n@two{#2}\mathrel{\setw@dth{#1}{#2}
\mathop{\vcenter{\hbox to \w@dth{\rightarrowfill}\kern-1.7ex
                 \hbox to \w@dth{\rightarrowfill}}%
       }\limits
\ifx\n@one\empty\else ^{\box\z@}\fi
\ifx\n@two\empty\else _{\box\@ne}\fi}}
\def\tw@@^#1{\@ifnextchar_ {\two@^{#1}}{\two@^{#1}_{}}}

\def\tofr@^#1_#2{\def\n@one{#1}\def\n@two{#2}\mathrel{\setw@dth{#1}{#2}
\mathop{\vcenter{\hbox to \w@dth{\rightarrowfill}\kern-1.7ex
                 \hbox to \w@dth{\leftarrowfill}}%
       }\limits
\ifx\n@one\empty\else ^{\box\z@}\fi
\ifx\n@two\empty\else _{\box\@ne}\fi}}
\def\t@fr@^#1{\@ifnextchar_ {\tofr@^{#1}}{\tofr@^{#1}_{}}}


\newdimen\W@dth
\def\setW@dth#1#2{\setbox\z@\hbox{$#1$}\W@dth=\wd\z@
\setbox\@ne\hbox{$#2$}\ifnum\W@dth<\wd\@ne \W@dth=\wd\@ne \fi
\advance\W@dth by 1.2em}

\def\T@^#1_#2{\allowbreak\def\N@one{#1}\def\N@two{#2}\mathrel
{\setW@dth{#1}{#2}
\mathop{\hbox to \W@dth{\rightarrowfill}}\limits
\ifx\N@one\empty\else ^{\box\z@}\fi
\ifx\N@two\empty\else _{\box\@ne}\fi}}
\def\T@@^#1{\@ifnextchar_ {\T@^{#1}}{\T@^{#1}_{}}}

\def\T@left^#1_#2{\def\N@one{#1}\def\N@two{#2}\mathrel{\setW@dth{#1}{#2}
\mathop{\hbox to \W@dth{\leftarrowfill}}\limits
\ifx\N@one\empty\else ^{\box\z@}\fi
\ifx\N@two\empty\else _{\box\@ne}\fi}}
\def\T@@left^#1{\@ifnextchar_ {\T@left^{#1}}{\T@left^{#1}_{}}}

\def\Tofr@^#1_#2{\def\N@one{#1}\def\N@two{#2}\mathrel{\setW@dth{#1}{#2}
\mathop{\vcenter{\hbox to \W@dth{\rightarrowfill}\kern-1.7ex
                 \hbox to \W@dth{\leftarrowfill}}%
       }\limits
\ifx\N@one\empty\else ^{\box\z@}\fi
\ifx\N@two\empty\else _{\box\@ne}\fi}}
\def\T@fr@^#1{\@ifnextchar_ {\Tofr@^{#1}}{\Tofr@^{#1}_{}}}

\def\Two@^#1_#2{\def\N@one{#1}\def\N@two{#2}\mathrel{\setW@dth{#1}{#2}
\mathop{\vcenter{\hbox to \W@dth{\rightarrowfill}\kern-1.7ex
                 \hbox to \W@dth{\rightarrowfill}}%
       }\limits
\ifx\N@one\empty\else ^{\box\z@}\fi
\ifx\N@two\empty\else _{\box\@ne}\fi}}
\def\Tw@@^#1{\@ifnextchar_ {\Two@^{#1}}{\Two@^{#1}_{}}}

\def\to{\@ifnextchar^ {\t@@}{\t@@^{}}}
\def\from{\@ifnextchar^ {\t@@left}{\t@@left^{}}}
\def\tofro{\@ifnextchar^ {\t@fr@}{\t@fr@^{}}}
\def\To{\@ifnextchar^ {\T@@}{\T@@^{}}}
\def\From{\@ifnextchar^ {\T@@left}{\T@@left^{}}}
\def\Two{\@ifnextchar^ {\Tw@@}{\Tw@@^{}}}
\def\Tofro{\@ifnextchar^ {\T@fr@}{\T@fr@^{}}}

\makeatother


\input{diagxy}

\begin{document}
\maketitle

\begin{abstract}
Connections are an important tool of differential geometry.  This paper investigates their definition and structure in the abstract setting of tangent categories.  At this level of abstraction we derive several classically important results about connections, including the Bianchi identities, identities for curvature and torsion, almost complex structure, and parallel transport. 
\end{abstract}

\tableofcontents 


\section{Introduction}
The fundamental and most important theorem of Riemannian Geometry is that any Riemannian manifold has a unique (metric compatible) connection -- the Levi-Civita connection -- which determines its geodesics and, thus, its geometry.  A connection on a smooth manifold in a very real sense encodes ``geometry'': not only does it determine geodesics, but also curvature, parallel transport, and, thus, the holonomy of the space.  This has made the investigation of connections, in their various incarnations, a central topic of Differential Geometry.    

Tangent categories provide an abstract and very general framework for Differential Geometry by axiomatizing the behaviour of the tangent bundle functor.  While the category of smooth manifolds is a standard example of a tangent category, there are {\em many\/} other sources of important examples:  categories of schemes from algebraic geometry, microlinear spaces from Synthetic Differential Geometry,  convenient categories of manifolds, tropical geometry, models of the differential $\lambda$-calculus from computer science, and categories of combinatorial species from combinatorics.  

Since connections are important in Differential Geometry, a natural question to ask is how one can define and work with connections in an abstract tangent category.  Providing answers to this question is the main goal of this paper.  In particular, we will use \emph{differential bundles} (see \cite{diffBundles}) -- the analog of vector bundles for tangent categories -- to give several different notions of a connection in a tangent category and explore how these different notions are related.  Classically these different notions are equivalent, however, this is not always the case for general tangent categories.

Perhaps the most widely used notion of connection is that of a \emph{Koszul connection} (on a tangent bundle, this is also known as a \emph{covariant derivative}).  This involves giving an operation on the global sections of a vector bundle $q:E \to M$.  In a tangent category one cannot expect the global sections of an object to completely determine the object (already in algebraic geometry this is not the case).  However, an important result of Patterson \cite{patterson} shows that, in the classical case, a Koszul connection on a vector bundle $q:E \to M$ bijectively corresponds to certain linear maps $K:T(E) \to E$ (where $T$ is the tangent bundle functor).  This latter notion can be reformulated in any tangent category (see Definition \ref{defnVertConnection}).  We call such a connection a \emph{vertical connection} as it is a special section of the \emph{vertical lift} -- a map which every differential bundle possesses.  We then show that this definition supports a very straightforward formulation of the concepts of curvature and torsion.

A different way of defining a connection on a vector bundle $q:E \to M$ is due to Ehresmann.  It involves splitting the tangent bundle $T(E)$ into a ``horizontal'' and ``vertical'' subspace.  This idea has a natural corresponding formulation in a tangent category as a special retract of the \emph{horizontal descent}  $\<T(q),p\>: T(E) \to T(M) \x_M E$ which we call a \emph{horizontal connection} (see Definition \ref{defnHorizConnection}).   This way of defining a horizontal connection is essentially the same as that used in Synthetic Differential Geometry (see Definition 1.1 of \cite{kockConnections}), and is used in some texts in ordinary differential geometry (see, for example \cite[pg. 104]{lang}).  

In many applications (in particular, see our discussion of  parallel transport in Section \ref{secParallelTransport}), it is useful to have both a vertical and a horizontal connection.  Thus, in a general tangent category, we define a full \emph{connection} to consist of both a vertical and a horizontal connection satisfying two compatibility conditions (see Definition \ref{defnConnection}). 

Classically, the two notions, a  Koszul/vertical connection and an Ehresmann/horizontal connection are equivalent and, indeed, generate a (full) connection.  However, the proof of this fact uses two key facts about smooth manifolds: (1) that one can negate tangent vectors, and (2) that every smooth manifold always has an Ehresmann connection.  In an arbitrary tangent category one does not necessarily have negatives as to do so would rule out  key examples arising from combinatorics and Computer Science.  Furthermore, there is no reason why a general object in a tangent category should possess an Ehresmann connection.  However, when one has negation and sufficient horizontal connections we show that these notions are indeed equivalent.  In particular, given the existence of negatives, we show having a horizontal connection implies one has a connection (see Proposition \ref{horizontalToConnection}).  Furthermore, given both negatives and a horizontal connection on an object, we show that having a vertical connection implies that one has a connection (see Proposition \ref{propConstructingHorizLift}).  

In addition to establishing these various forms for connections, we show that a number of their fundamental properties generalize to tangent categories.  In particular, we show how to construct horizontal and vertical connections from existing ones by pulling back or applying the tangent functor $T$.  We also provide generalizations of various classical tools such as the Bianchi identities, the existence of parallel transport from a connection, and the existence of an almost complex structure from an affine connection.  

This is by no means the end of the story on connections in tangent categories.  Indeed, an important classical property of connections is their correspondence to sprays \cite{sprays} which we hope to explore in future work.  

The paper is organized as follows.  In section \ref{secDiffBundles}, we review the idea of a \emph{differential bundle} in a tangent category from \cite{diffBundles}.  These are the analog of vector bundles in the category of smooth manifolds.  The section also contains some new results concerning linear maps between these bundles.  

In section \ref{secVertConn}, we look at how to define the analog of the Koszul connection in this setting, calling such connections \emph{vertical connections}.  We prove a variety of basic results about this notion of connection.   In addition, we show how to define curvature and torsion for such connections, and show the equivalence of such connections to the notion of a \emph{Finsler connection}.

In section \ref{secHorizConn}, we look at how to define the analog of Ehresmann connections in this setting, calling such connections \emph{horizontal connections}.  Again, we prove a variety of basic results about these connections.

In section \ref{secConn}, we define a \emph{connection} as a vertical connection together with a horizontal connection, satisfying two compatibility axioms.  We prove some basic results about these connections.  We show how under certain circumstances one can define a connection from the data of a vertical or horizontal connection.  We finish by recovering several classical results: given an assumption of the existence of certain \emph{curve objects}, we show how to define parallel transport for such a connection, and assuming the existence of negatives, we show how to define an almost complex structure from an affine connection.  

\medskip
Finally, we would like to draw the readers attention to several notational conventions we use:
\begin{itemize}
	\item Throughout the paper, we write composition in diagrammatic order, so that $f$, followed by $g$ is written as $fg$.
	\item We use the notation $\<f,g\>$ for the unique map into a pullback induced by the maps $f,g$, and $h \times k$ as shorthand for $\<\pi_0 h, \pi_1 k\>$ for maps $h,k$ out of a pullback 
	           (where $\pi_0$ and $\pi_1$ are the two projections of the pullback).  
	\item We occasionally use infix notation for summing terms from an additive bundle; that is, we will occasionally write $f + g$ to represent $\<f,g\>+$.
	\item To save space in long equations, we frequently omit the subscripts on the tangent category natural transformations (so that, for example, instead of writing $\ell_M c_M = \ell_M$, we simply write $\ell c = \ell$).  
\end{itemize}


\section{Differential bundles}\label{secDiffBundles}

We assume the reader is familiar with the basic theory of tangent categories as presented in \cite{sman3}.  In this section, we review (and slightly modify) the notion of \emph{differential bundles} in tangent categories as defined in \cite{diffBundles}.  This allows us to fix notation and add some results which we shall need about linear bundle morphisms between differential bundles.  Differential bundles are the appropriate generalization of vector bundles of smooth manifolds to tangent categories, and are the objects on which we define connections.  

\subsection{Definition and examples}\label{diffBundlesDefinition}

There are two differences with the version of differential bundles as presented in \cite{diffBundles} and the version used here.  First, we require extra pullback assumptions in order to describe horizontal connections on such bundle\footnote{Thanks to Rory Lucyshyn-Wright for noticing that an earlier version of this extra pullback assumption was insufficient.}.   Secondly, in \cite{diffBundles}, $\sigma$ and $\zeta$ were used to denote the addition and zero of an additive bundle; in order to facilitate the writing of sums in infix notation, in this paper we will use $\pq$ and $\zq$ instead.  

\begin{definition}
A \textbf{differential bundle} in a tangent category consists of an additive bundle $(q: E \to M, \pq: E_2 \to E, \zq: M \to E)$ 
with a map $\lambda: E \to T(E)$, called the {\bf lift}, such that
\begin{itemize}
        \item finite wide pullbacks of $q$ along itself exist and are preserved by each $T^n$ (call these objects $E_n$);
		\item for any $n, m,k \in \mathbb{N}$, the pullback of $E_n \to M$ along $T_mM \to M$ exists and is preserved by each $T^k$;
        \item $(\lambda,0_M)$ is an additive bundle morphism from $(E, q,\pq,\zq)$ to $(T(E), T(q), T(\pq), T(\zq))$;
	\item $(\lambda,\zq)$ is an additive bundle morphism from $(E, q,\pq,\zq)$ to $(T(E), p_E,+_E,0_E)$;
	\item the {\bf universality of the lift} requires that the following is a pullback:
	       $$\xymatrix{E_2 \ar[d]_{\p_0q=\pi_1q} \ar[rrr]^{\mu := \<\p_0\lambda, \p_10\>T(\pq)} & & & T(E) \ar[d]^{T(q)} \\ M \ar[rrr]_{0} && & T(M)}$$
	\item the equation $\lambda \ell_E = \lambda T(\lambda)$ holds.
\end{itemize}
We shall write $\sf q$ to denote the entire bundle structure $(q,\pq,\zq,\lambda)$.  
\end{definition}

There are several important examples of differential bundles and methods of constructing new differential bundles from existing ones.  Proofs for these results can all be found in \cite{diffBundles}.  

\begin{example}~ \label{basicdiffbundles}
{\em 
\begin{enumerate}[(i)]
\item In the tangent category of smooth manifolds, every vector bundle is a differential bundle.  The  converse, however,  is not true. Usually a vector bundle, $q: E \to M$, in the category of smooth manifolds 
is assumed to have a fixed vector space, $V$, such that for each $x \in M$ there is a neighbourhood, $x \in U$, over which the bundle is trivial with fibre $V$,  thus it takes the form $\pi_1: U \x V \to U$.    Differential 
bundle do not require that the fibres all be isomorphic to a particular $V$.
\item In a model of synthetic differential geometry $\X$, for any object $M$ in $\X$, Euclidean $R$-modules in $\X/M$ are the same as differential bundles over $M$. 
\item In any tangent category, for any object $M$, the tangent bundle $T(M)$ is a differential bundle, with structure
	\[ {\sf p}_M := (p_M: T(M) \to M,+_M,0_M,\ell_M) \]  
\item In any tangent category, if ${\sf q}$ is a differential bundle, then $$T({\sf q}):= (T(q),T(\pq),T(\zq),T(\lambda)c)$$ is also a differential bundle (the extra pullback requirements in this paper is automatic in this case). 
\item If ${\sf q}$ is a differential bundle over $M$ and $f: X \to M$ is a map such that the pullback of $q$ along $f$ (notated $f^*(E)$, with projections $f^*(q)$ and $f^*_E$, as below) exists and is preserved by $T$, and such that the pullback of $f^*(q)$ along $p_X$ exists and is preserved by each $T^n$, then $f^*(q)$ is a differential bundle, with lift $f^*(\lambda)$ given by
$$\xymatrix{ & T(f^{*}(E)) \ar[dd]|{\hole}^<<<<{T(f^{*}(q))} \ar[rr]^{T(f^{*}_E)} & & T(E) \ar[dd]^{T(q)} \\ 
           f^*(E) \ar[dd]_{f^{*}(q)}  \ar@{..>}[ur]^{f^{*}(\lambda)} \ar[rr]^>>>>>>>{f^{*}_E} & & E \ar[dd]^<<<<<<q \ar[ru]_{\lambda} \\
           & T(X) \ar[rr]_<<<<<<{T(f)}|>>>>>>>>>>>>{~~} & & T(M) \\
           X \ar[ur]^{0} \ar[rr]_{f} & & M \ar[ur]_{0}}$$
This bundle will be notated as $f^{*}({\sf q})$.  
\item An example of the above is when one has two bundles ${\sf q,q'}$ both over $M$.  In this case, one can take the pullback of $q \times q'$ along the diagonal $\Delta: M \to M \times M$, giving a bundle on $E \times_M E'$ over $M$.  We will write ${\sf q} \times {\sf q'}$ for this bundle: it is often called the Whitney sum.
\item In a Cartesian tangent category, differential bundles over a terminal object $1$ are also known as \emph{differential objects}; see Section 3 of \cite{diffBundles} for a discussion of this idea.  In particular, differential objects have in addition to their lift $\lambda: E \to T(E)$ a map $\hat{p}: T(E) \to E$, known as the \emph{principal projection}.  In the category of smooth manifolds, differential objects are vector spaces, and in models of SDG, differential objects are Euclidean $R$-modules.  
 \end{enumerate} }
\end{example}

Recall from \cite[Lemma 2.10]{diffBundles} that the universality of the lift in the definition of a differential bundle may alternatively be expressed by saying that the diagram 
$$E_2 \to^{\mu} T(E) \Two^{T(q)}_{pq0} T(M)$$
is an equalizer.  In particular, this means that for any $f: X \to T(E)$ with $fT(q) = fpq0$, there is a map from $X$ to $E_2$ which we shall write as $f_{|\mu}$.  By post-composing this map with $\pi_0: E_2 \to E$ we get a map from $X$ to $E$, which we write as $\{f\}$.  This operation and its properties (see \cite[Lemma 2.10, 2.12, 2.13]{diffBundles}) will be used a few times in this paper.

\subsection{Linear bundle morphisms}

In this section we recall the notion of \emph{linear} morphisms between differential bundles and prove some new results about these morphisms.  

\begin{definition}
Let $\sf q$ and $\sf q'$ be differential bundles.  A \textbf{bundle morphism} between these bundles simply consists of a pair of maps $f_1: E \to E'$, $f_0: M \to M'$ such that $f_1q' = qf_0$ 
(first diagram below).  A bundle morphism is \textbf{linear} in case, in addition, it preserves the lift, that is $f_1\lambda' = \lambda T(f_1)$ (the 
second diagram below).
$$\xymatrix{E \ar[d]_q \ar[rr]^{f_1} && E' \ar[d]^{q'} \\ M \ar[rr]_{f_0} && M'} ~~~~~~ 
 \xymatrix{E \ar[d]_\lambda \ar[rr]^{f_1} & & E' \ar[d]^{\lambda'} \\ T(E) \ar[rr]_{T(f_1)} && T(E')}$$
\end{definition}


By Proposition 2.16 from \cite{diffBundles}, linear bundle morphisms are automatically \emph{additive}, in the sense that they preserve the addition and zero of $\sf q$.  

The following result shows that a great variety of morphisms in a tangent category are linear.    

\begin{proposition}\label{examplesLinearMorphisms}
Let $\sf q$ be a differential bundle on $E$ over $M$.  
\begin{enumerate}[(i)]
	\item The identity bundle morphism $(1_E,1_M)$ is linear. 
	\item $(p_E,p_M)$ is a linear bundle morphism from $T({\sf q})$ to ${\sf q}$.
	\item $(0_E,0_M)$ is a linear bundle morphism from ${\sf q}$ to $T({\sf q})$.
	\item $(\pq,1)$ is a linear bundle morphism from ${\sf q} \times {\sf q}$ to ${\sf q}$.
	\item $(\lambda,0)$ is a linear bundle morphism from ${\sf q}$ to $T({\sf q})$.
	\item $(\lambda,\zq)$ is a linear bundle morphism from ${\sf q}$ to $\sf p_E$.
	\item For any $X$, $(c_X,1)$ is a linear bundle morphism from $T({\sf p_X})$ to ${\sf p_{T(X)}}$ and from ${\sf p_{T(X)}}$ to $T({\sf p_X})$.  
	\item $(c_E,c_M)$ is a linear bundle morphism from $T(T({\sf q}))$ to itself.  
	\item For any $f: X \to Y$, $(T(f),f)$ is a linear bundle morphism from ${\sf p_X}$ to ${\sf p_Y}$.
	\item For any $f: X \to M$, $(f^*_E,f)$ is a linear bundle morphism from $f^*({\sf q})$ to $\sf q$.  
\end{enumerate}
\end{proposition}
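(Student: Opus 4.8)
The plan is to verify each item by checking the two defining conditions of a linear bundle morphism: that it is a bundle morphism ($f_1 q' = q f_0$) and that it preserves the lift ($f_1 \lambda' = \lambda T(f_1)$). Most of the bundle-morphism squares are immediate from naturality of the relevant tangent-category transformations, so the real content in each case is the lift-preservation equation. Let me walk through the items.

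For (i), both conditions are trivialities. For (ii) and (iii), the bundle squares are the naturality of $p$ and $0$ against $q$, and the lift conditions are exactly the differential bundle axioms: $(\lambda, 0)$ being an additive bundle morphism to $(T(E), p_E, +_E, 0_E)$ gives $\lambda p_E = 0 \lambda$ (wait — more carefully, one wants $p_E \lambda' = $ — ) here $\lambda' = \ell_E$ is the lift of $\sf p_E$ and the lift of $\sf q$ applied with $T(p_E)$; the required equation $p_E \ell_E = \lambda T(p_E)$ follows from $\lambda$ being a section of $T$-type data — I would derive it from the axiom $\lambda\ell_E = \lambda T(\lambda)$ together with the projection equations, or cite the relevant lemma of \cite{diffBundles}. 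For (iii), $0_E \ell_E = \lambda T(0_E)$ is again one of the standard compatibilities between $0$, $\lambda$ and $\ell$; for (iv), linearity of $\pq$ is precisely the statement that $(\lambda, 0)$ is an additive bundle morphism in the strong sense needed, i.e. $\pq \lambda = (\lambda \times_M \lambda) T(\pq)$ up to the canonical identification of $T(E_2)$ with $T(E)_2$ — this is essentially built into the definition. Items (v) and (vi) restate, respectively, the axiom that $(\lambda, 0_M)$ is an additive bundle morphism into $T(\sf q)$ (whose lift is $T(\lambda)c$) — so the lift condition reads $\lambda (T(\lambda) c) = \lambda T(\lambda)$, which is exactly the final differential bundle axiom $\lambda \ell_E = \lambda T(\lambda)$ after using $\ell c = \ell$ — and the axiom that $(\lambda, \zq)$ is an additive bundle morphism into $\sf p_E$, whose lift condition $\lambda \ell_E = \lambda T(\lambda)$ is again the last axiom verbatim.

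For (vii)–(ix), I would use the known tangent-category identities among $c$, $\ell$, $T$ applied to the canonical lift $\ell$ of a tangent bundle. Specifically, linearity of $(c_X, 1)$ between $T(\sf p_X)$ and $\sf p_{T(X)}$ amounts to the coherence $c\, \ell_{TX} = T(\ell_X)\, c \cdot(\text{something})$; I would reduce it to the standard equations $\ell c = \ell$, $c T(\ell) = \ell_T c T(c)$ (or whichever form appears in \cite{sman3}), and the symmetric version for the other direction; (viii) is just functoriality of $c$ (naturality of $c$ against itself, $c_E T(T(q)) = T(T(q)) c_M$, plus $c$ preserving $\ell$), and (ix) is naturality of $p$ and of $\ell$ against $f$, i.e. $T(f) p_Y = p_X f$ and $T(f)\ell_Y = \ell_X T(T(f))$. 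Finally (x): the bundle square $f^*_E\, q = f^*(q)\, f$ is the defining commutativity of the pullback square, and the lift condition $f^*_E\, \lambda = f^*(\lambda)\, T(f^*_E)$ is literally the defining equation of $f^*(\lambda)$ as displayed in Example \ref{basicdiffbundles}(v).

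I expect the main obstacle to be bookkeeping rather than conceptual: several items (v), (vi) in particular, and (ii)–(iv)) require being careful about \emph{which} lift is meant on the codomain bundle — $T(\sf q)$ has lift $T(\lambda)c$, $\sf p_E$ has lift $\ell_E$ — and about the canonical isomorphisms $T(E_n) \cong (TE)_n$ used silently when talking about additive bundle morphisms. The cleanest route is to treat (v) and (vi) as immediate restatements of the differential-bundle axioms (modulo $\ell c = \ell$), then derive (ii), (iii), (iv) from the additive-bundle-morphism clauses of the definition together with \cite[Prop. 2.16]{diffBundles}, and handle (vii)–(ix) purely from the coherence equations for $\ell$ and $c$ in a tangent category, citing \cite{sman3} for the specific identities. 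No single step is hard; the risk is simply mismatching a subscript or a composite, so I would lay out each verification as a short explicit chain of named equations.
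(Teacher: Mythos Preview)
Your overall plan matches the paper's: check the bundle-morphism square and the lift-preservation equation item by item, with (i), (iv), (v), (vi), (ix), (x) reducing directly to axioms or definitions, and (vii)--(ix) to the tangent-category coherences for $c$ and $\ell$. For those items your sketch is fine and essentially what the paper does.

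The gap is in (ii) and (iii). You visibly lose track of which bundle you are mapping between: the morphism $(p_E,p_M)$ goes from $T({\sf q})$ to ${\sf q}$, so the domain lift is $T(\lambda)c$ and the codomain lift is $\lambda$; the equation to be verified is $p_E\,\lambda = T(\lambda)c\,T(p_E)$, not $p_E\,\ell_E = \lambda\,T(p_E)$. Similarly for (iii) the target lift is $T(\lambda)c$, and the required identity is $0_E\,T(\lambda)c = \lambda\,T(0_E)$. Neither of these is a differential-bundle axiom, and your proposed route via ``the additive-bundle-morphism clauses together with \cite[Prop.~2.16]{diffBundles}'' goes the wrong way: that proposition says linear $\Rightarrow$ additive, not the converse. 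The paper instead dispatches (ii) and (iii) in one line each using only tangent-category identities: $T(\lambda)c\,T(p) = T(\lambda)p = p\lambda$ (from $c\,T(p)=p$ and naturality of $p$) and $0\,T(\lambda)c = \lambda\,0\,c = \lambda\,T(0)$ (from naturality of $0$ and $0\,c = T(0)$). A smaller point: in (viii) no $\ell$ appears --- the lift of $T(T({\sf q}))$ is $T(T(\lambda)c)c$, and the proof is a short shuffle using naturality of $c$ and the Yang--Baxter coherence $cT(c)c = T(c)cT(c)$, not ``$c$ preserving $\ell$''.
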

\begin{proof}
\begin{enumerate}[(i)]
	\item Straightforward.
	\item The bundle morphism requirement is naturality of $p$: $pq = T(q)p$.  For linearity,
		\[ T(\lambda)cT(p) = T(\lambda)p = p \lambda \]
	by properties of $p$ and $c$.
	\item The bundle morphism requirement is naturality of $0$: $0T(q) = q0$.  For linearity,
		\[ 0T(\lambda)c = \lambda 0 c = \lambda T(0) \]
	by properties of $0$ and $c$.
	\item The bundle morphism requirement is one of the axioms for an additive bundle: $\pq q = \pi_0q = \pi_1 q$.  The linearity is additivity of $\lambda$ from ${\sf q}$ to $T({\sf q})$: $\pq \lambda = (\lambda \times \lambda)T(\pq)$.

	\item That $(\lambda,0)$ is a bundle morphism is one of the axioms of a differential bundle.  For linearity, we have
		\[ \lambda T(\lambda)c = \lambda \ell c = \lambda \ell = \lambda T(\lambda) \]
	using another axiom of a differential bundle ($\lambda \ell = \lambda T(\lambda)$).
	\item That $(\lambda,\zq)$ is a bundle morphism is one of the axioms of a differential bundle.  The linearity axiom is precisely one of the other axioms ($\lambda \ell = \lambda T(\lambda)$).
	\item The bundle morphism requirements are part of the axioms of a tangent category.  The linearity requirements ($c\ell = T(\ell)cT(c)$ and $c T(\ell)c = \ell T(c)$) are simply re-arrangements of the tangent category axiom $\ell T(c)c = c T(\ell)$, using $c^2 = 1$.
	\item The pair is a bundle morphism by naturality of $c$: $cT^2(q) = T^2(q)c$.  For linearity,
		\[ cT(T(\lambda)c)c = cT^2(\lambda)T(c)c = T^2(\lambda)cT(c)c = T^2(\lambda)T(c)cT(c) = T(T(\lambda) c)cT(c) \]
	by naturality and coherence of $c$.  
	\item The bundle morphism requirement is naturality of $p$: $T(f)p = pf$.  Linearity is naturality of $\ell$: $T(f)\ell = \ell T^2(f) = \ell T(T(f))$.
	\item The bundle morphism and linearity requirements follow directly from the definition of the bundle $f^*({\sf q})$.  
\end{enumerate}
\end{proof}

In addition, there are many ways to construct new linear bundle morphisms from existing ones.  

\begin{proposition}\label{constructingLinearMorphisms}
Let $\sf q,q',q''$ be differential bundles.  
\begin{enumerate}[(i)]
	\item If $f: {\sf q} \to {\sf q'}$ and $g: {\sf q'} \to {\sf q''}$ are linear bundle morphisms then so is the composite $fg$.
	\item If $(f_1,f_0): {\sf q} \to {\sf q'}$ is linear, then so is $(T(f_1),T(f_0)): T({\sf q}) \to T({\sf q'})$.
	\item Suppose that $f: {\sf q} \to {\sf q'}$ is a linear bundle morphism and a retract of some bundle morphism $s: {\sf q'} \to {\sf q}$.  If $g: {\sf q'} \to {\sf q''}$ is any bundle morphism such that $fg$ is linear, then $g$ is linear.  
	\item If $f: {\sf q} \to {\sf q'}$ is a linear bundle morphism with a bundle morphism inverse $f^{-1}: {\sf q'} \to {\sf q}$, then $f^{-1}$ is linear.  
	\item If $q'$ and $q''$ are both over $M'$ and $q' \times q''$ exists, with $(f_1,f_0)$ a linear bundle morphism from $\sf q$ to $\sf q'$ and $(g_1,g_0)$ a linear bundle morphism from $\sf q$ to $\sf q''$ such that $f_1q' = g_1q''$, then $(\<f_1,g_1\>,f_0)$ and $(\<f_1,g_1\>,g_0)$ are linear bundle morphisms from $\sf q$ to ${\sf q'} \times {\sf q''}$. 
	\item If $f = (f_1,f_0)$ and $g = (g_1,g_0)$ are linear bundle morphisms from $\sf q$ to $\sf q'$ such that $f_1q' = g_1q'$, then $(\<f_1,g_1\>\pq,f_0)$ is also a linear bundle morphism from $\sf q$ to $\sf q'$.
	\item If $f = (f_1,f_0)$ and $g = (g_1,g_0)$ are linear bundle morphisms from $\sf q$ to $T({\sf q'})$ such that $f_1p = g_1p$ and $f_0p = g_0p$, then $f+g := (f_1 + g_1,f_0 + g_0) = (\<f_1,g_1\>+, \<f_0,g_0\>+)$ is also a linear bundle morphism from $\sf q$ to $T({\sf q'})$. 
\end{enumerate}
\end{proposition}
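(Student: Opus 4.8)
The plan is to verify each clause directly from the two conditions defining a linear bundle morphism ${\sf q}\to{\sf q'}$ — the bundle-morphism equation $f_1q'=qf_0$ and lift-preservation $f_1\lambda'=\lambda T(f_1)$ — and to reduce the later clauses to the earlier ones and to Proposition \ref{examplesLinearMorphisms} wherever possible. Clauses (i) and (ii) are immediate diagram chases: (i) follows by substituting the hypotheses and using functoriality of $T$; (ii) uses that $T({\sf q})$ carries the lift $T(\lambda)c$, so that lift-preservation for $(T(f_1),T(f_0))$ reads $T(f_1)\,T(\lambda')c = T(f_1\lambda')c = T(\lambda T(f_1))c = (T(\lambda)c)\,T(T(f_1))$ by functoriality and naturality of $c$.

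Clause (iii) carries the one genuine idea. Writing $s=(s_1,s_0)$, the hypothesis that $f$ is a retract of $s$ gives $sf=1_{{\sf q'}}$, hence $s_1f_1=1_{E'}$. From linearity of $fg$ and of $f$ we get $f_1g_1\lambda'' = \lambda T(f_1g_1) = \lambda T(f_1)T(g_1) = f_1\lambda'T(g_1)$, and pre-composing with $s_1$ to cancel $f_1$ on the left yields $g_1\lambda'' = \lambda'T(g_1)$; since $g$ is a bundle morphism by hypothesis, this says $g$ is linear. Clause (iv) is then the special case $s=g=f^{-1}$: $f^{-1}f=1_{{\sf q'}}$ exhibits $f$ as a retract of $f^{-1}$, and $ff^{-1}=1_{{\sf q}}$ is linear by Proposition \ref{examplesLinearMorphisms}(i), so (iii) applies. (Directly: $f_1\lambda'=\lambda T(f_1)$ gives $\lambda = f_1^{-1}\lambda T(f_1)$, whence $\lambda'T(f_1^{-1}) = f_1^{-1}\lambda T(f_1f_1^{-1}) = f_1^{-1}\lambda$.)

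For clause (v), recall that the Whitney sum ${\sf q'}\times{\sf q''}$ has total space the pullback $E'\times_{M'}E''$ — which $T$ preserves — and that its lift is built so that the two projections become linear, i.e. $\lambda_\times T(\pi_0)=\pi_0\lambda'$ and $\lambda_\times T(\pi_1)=\pi_1\lambda''$ (Example \ref{basicdiffbundles}(v),(vi)). The pair $\<f_1,g_1\>$ exists since $f_1q'=g_1q''$; it is a bundle morphism both over $f_0$ (as $\<f_1,g_1\>\pi_0q'=f_1q'=qf_0$) and over $g_0$ (as $\<f_1,g_1\>\pi_1q''=g_1q''=qg_0$); and, because $T$ preserves the defining pullback, its linearity — which does not depend on the base map — may be checked against $T(\pi_0)$ and $T(\pi_1)$ separately, where it is precisely the linearity of $f$ and of $g$ respectively. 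Clause (vi) follows either by composing the linear maps $(\<f_1,g_1\>,f_0):{\sf q}\to{\sf q'}\times{\sf q'}$ (from (v); the relevant Whitney square over $M'$ exists since $E'_2$ does and is $T^n$-stable by the differential-bundle axioms) and $(\pq,1):{\sf q'}\times{\sf q'}\to{\sf q'}$ (Proposition \ref{examplesLinearMorphisms}(iv)) by means of (i); or directly, using that $(\lambda',0)$ is an additive bundle morphism from ${\sf q'}$ to $T({\sf q'})$ (so $\pq\lambda' = \<\pi_0\lambda',\pi_1\lambda'\>T(\pq)$), whence $\<f_1,g_1\>\pq\lambda' = \<f_1\lambda',g_1\lambda'\>T(\pq) = \<\lambda T(f_1),\lambda T(g_1)\>T(\pq) = \lambda T(\<f_1,g_1\>\pq)$, the outer equalities using that $T$ preserves $E'_2$.

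Clause (vii) is the one I expect to cost the most work. The bundle-morphism part is routine: $(T(q'),q'):{\sf p_{E'}}\to{\sf p_{M'}}$ is linear (Proposition \ref{examplesLinearMorphisms}(ix)), hence additive, so $+_{E'}T(q') = (T(q')\times T(q'))+_{M'}$, and therefore $\<f_1,g_1\>+_{E'}T(q') = \<f_1T(q'),g_1T(q')\>+_{M'} = \<qf_0,qg_0\>+_{M'} = q(\<f_0,g_0\>+_{M'})$. For lift-preservation one must show $\<f_1,g_1\>+_{E'}\,T(\lambda')c = \lambda\,T(\<f_1,g_1\>+_{E'})$; rewriting the right-hand side as $\lambda\,\<T(f_1),T(g_1)\>\,T(+_{E'})$ (using that $T$ preserves $T_2E':=TE'\times_{E'}TE'$) and using the linearity of $f$ and $g$ to replace $\lambda T(f_1)$ by $f_1T(\lambda')c$ and $\lambda T(g_1)$ by $g_1T(\lambda')c$, the problem reduces to a single coherence — that the fibrewise tangent-bundle addition $+_{E'}$ respects the lift $T(\lambda')c$ of $T({\sf q'})$, i.e. $+_{E'}\,T(\lambda')c = \<\pi_0T(\lambda')c,\pi_1T(\lambda')c\>\,T(+_{E'})$, the pairing taken into $T(T_2E')$. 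Establishing this identity requires a diagram chase through the tangent-category axioms relating $c$, $p$, $\ell$ and the two additive structures on the double tangent bundle (in particular $cp=Tp$ and the naturality of $p$ and $\ell$); I expect this to be the hard part, the rest being bookkeeping.
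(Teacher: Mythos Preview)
Your proposal is correct and follows essentially the same route as the paper's proof for all seven clauses; in particular your argument for (iii) is exactly the one the paper cites from \cite{diffBundles}, and your reductions of (iv) and (vi) to earlier clauses match the paper's.

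The only point worth flagging is (vii): the coherence you isolate, $+_{E'}\,T(\lambda')c = (T(\lambda')c \times T(\lambda')c)\,T(+_{E'})$, is not a hard diagram chase but a two-step consequence of tangent-category axioms.  Naturality of $+$ (as a transformation $T_2 \Rightarrow T$) gives $+_{E'}T(\lambda') = (T(\lambda')\times T(\lambda'))\,+_{TE'}$, and the axiom that $c$ is an additive bundle isomorphism between the two structures on $T^2E'$ gives $+_{TE'}\,c = (c\times c)\,T(+_{E'})$; composing these yields the identity immediately, with no appeal to $p$ or $\ell$.  The paper's proof of (vii) is just this, written inline.
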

\begin{proof}
\begin{enumerate}[(i)]
	\item Straightforward.  
	\item The bundle requirement is functoriality of $T$: $T(f_1)T(q) = T(f_1q) = T(qf_0) = T(q)T(f_0)$.  Linearity uses functoriality of $T$ and naturality of $c$:
		\[ T(f_1)T(\lambda')c = T(f_1\lambda)c = T(\lambda T(f_1))c = T(\lambda)T^2(f_1)c = T(\lambda)c T^2(f_1). \]
	\item This can be found in \cite{diffBundles}, Proposition 2.18.i.  
	\item Follows immediately from (iii).  
	\item The bundle requirement for the first is $\<f_1,g_1\>\pi_0 q' = f_1q' = qf_0$, while the second is similar since the projection for ${\sf q'} \times {\sf q''}$ is also $\pi_1 q''$, so $\<f_1,g_1\>\pi_1 q'' = g_1q' = qg_0$.  The linearity requirement is the same for both, and similarly straightforward:
		\[ \<f_1,g_1\>(\lambda' \times \lambda'') = \<f_1 \lambda', g_1 \lambda''\> = \<\lambda T(f_1), \lambda T(f_2)\> = \lambda T(\<f_1,f_2\>). \]
	\item $(\<f_1,g_1\>\pq,f_0)$ is the composite of $(\<f_1,g_1\>,f_0)$ and $(\pq,1)$, which are linear by the result above and by Lemma \ref{examplesLinearMorphisms}.iv.  
	\item For the bundle morphism requirement, using naturality of $+$,
		\[ \<f_1,g_1\>+T(q') = \<f_1T(q'),g_1T(q')\>+ = \<qf_0,qg_0\>+ = q\<f_0,g_0\>+. \]
	For linearity, using naturality of $+$ and additivity of $c$,
	\begin{eqnarray*}
	&   & \<f_1,g_1\>+T(\lambda)c \\
	& = & \<f_1T(\lambda),g_1T(\lambda)\>+c \\
	& = & \<f_1T(\lambda)c,g_1T(\lambda)c\>T(+) \\
	& = & \<\lambda f_1, \lambda g_1\>T(+) \\
	& = & \lambda T(\<f_1,g_1\>+)
	\end{eqnarray*}
\end{enumerate}
\end{proof}

The bracketing operation associated to differential bundles (see the discussion at the end of section \ref{diffBundlesDefinition}) preserves linearity in two different ways.  

\begin{proposition}\label{propBracketLinear1}(Bracketing of linear bundle morphisms {\rm I})
Suppose $\sf q'$ and $\sf q$ are differential bundles, with $(f_1,f_0): {\sf q'} \to T({\sf q})$ a linear bundle morphism such that $f_1T(q) = f_1T(q)p0$.  Then $(\{f_1\},f_0p)$ is a linear bundle morphism from $\sf{q'}$ to $\sf q$.  
\end{proposition}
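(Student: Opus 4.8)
The plan is to unpack the definition of the bracketing operation $\{-\}$ and verify the two defining properties of a linear bundle morphism: the bundle morphism square $\{f_1\} q = q' (f_0 p)$, and the linearity equation $\{f_1\} \lambda = \lambda' T(\{f_1\})$. Recall from the discussion after Example \ref{basicdiffbundles} that, given $f_1 : E' \to T(E)$ with $f_1 T(q) = f_1 p q 0$, the map $f_{1|\mu} : E' \to E_2$ is the unique map into the equalizer with $f_{1|\mu}\mu = f_1$, and $\{f_1\} := f_{1|\mu}\pi_0$. The hypothesis $f_1 T(q) = f_1 T(q) p 0$ is exactly what is needed for $f_{1|\mu}$ to exist (noting $p0$ on $T(M)$ agrees with the required condition after one uses naturality/identities to rewrite $f_1 T(q) p q 0$ appropriately).

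First I would record the properties of $\{-\}$ from \cite[Lemma 2.10, 2.12, 2.13]{diffBundles} that I expect to need: namely how $\{f_1\}$ interacts with $q$ (something like $\{f_1\} q = f_1 T(q) p$, giving a handle on the base map) and how $\{-\}$ interacts with the lift $\lambda$ and with post-composition by linear maps or by $T$. Concretely, for the bundle morphism square I would compute $\{f_1\} q$ and show it equals $f_0 p q' $... more precisely $f_1 p_E q = f_1 T(q) p_M$ by naturality of $p$, and since $(f_1,f_0)$ is a bundle morphism $f_1 T(q) = q' f_0$, so $\{f_1\} q = f_1 T(q) p = q' f_0 p$, which is the required square for base map $f_0 p$. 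This uses only the explicit description of $\{f_1\}$ together with naturality of $p$.

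The linearity equation is the substantive part. The strategy is to use the universal property of the equalizer $E_2 \to^\mu T(E) \rightrightarrows T(M)$ (after applying $T$, which preserves it by the pullback assumptions in the definition of a differential bundle): it suffices to show that $\{f_1\}\lambda$ and $\lambda' T(\{f_1\})$ become equal after composing with the relevant monic/equalizing data, i.e. to push everything back through $\mu$ (or through $T(\mu)$). I would rewrite $\{f_1\}\lambda = f_{1|\mu}\pi_0\lambda$ and use the characterization of $\mu = \langle \pi_0\lambda, \pi_1 0\rangle T(\pq)$ together with the additive-bundle-morphism axioms for $\lambda$, turning $\pi_0\lambda$ into something expressible via $f_1$ and the lift $\ell$ on $T(E)$; on the other side, $\lambda' T(\{f_1\}) = \lambda' T(f_{1|\mu})T(\pi_0)$ and I would use linearity of $(f_1, f_0)$, i.e. $f_1 \lambda = \lambda' T(f_1)$, together with the coherence $\lambda \ell = \lambda T(\lambda)$ and naturality/coherence of $c$, to match the two expressions. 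The key identities in play will be the differential bundle axiom $\lambda \ell_E = \lambda T(\lambda)$, the two additive-bundle-morphism conditions on $\lambda$, naturality of $c$ and $\ell$, and the properties of $\{-\}$ established in \cite{diffBundles} (in particular any statement there saying $\{-\}$ commutes with $T$ up to $c$ and with post-composition by linear morphisms).

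The main obstacle I anticipate is bookkeeping: correctly tracking which copy of $T$ the lift $\ell$, the canonical flip $c$, and the bracket's auxiliary map $\mu$ live on, and assembling the precise chain that shows $\{f_1\}\lambda$ and $\lambda' T(\{f_1\})$ agree after passing through the equalizer. In particular, verifying that $\lambda' T(\{f_1\})$ actually satisfies the hypothesis needed to be written via the bracket in the first place (so that the universal property applies) may require an extra small lemma, and the cleanest route is probably to quote the appropriate part of \cite[Lemma 2.12 or 2.13]{diffBundles} asserting that $\{-\}$ is natural with respect to linear bundle morphisms and commutes suitably with the tangent functor, reducing the linearity check to an identity among already-established natural transformations rather than a from-scratch diagram chase.
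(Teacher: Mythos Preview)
Your bundle-morphism argument is exactly the paper's: $\{f_1\}q = f_1T(q)p = q'f_0p$ via \cite[Lemma 2.12.iii]{diffBundles}. For linearity, your final suggestion --- quote the bracketing lemmas rather than chase the equalizer by hand --- is precisely what the paper does, in a five-line computation:
\[
\lambda' T(\{f_1\}) \;=\; \lambda'\{T(f_1)c\} \;=\; \{\lambda' T(f_1) c\} \;=\; \{f_1 T(\lambda) c c\} \;=\; \{f_1 T(\lambda)\} \;=\; \{f_1\}\lambda,
\]
using in turn \cite[Lemma 2.13]{diffBundles} ($T$ commutes with $\{-\}$ up to $c$), \cite[Lemma 2.12.i]{diffBundles} (precomposition passes inside $\{-\}$), the linearity hypothesis, $c^2=1$, and \cite[Lemma 2.12.ii]{diffBundles}. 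So your instinct about the cleanest route is right, and the direct equalizer chase you sketch first is unnecessary.

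One genuine slip to fix: you state the linearity hypothesis on $(f_1,f_0)$ as $f_1\lambda = \lambda' T(f_1)$, but the target bundle is $T({\sf q})$, whose lift is $T(\lambda)c$, not $\lambda$. The correct hypothesis is $f_1\, T(\lambda)c = \lambda' T(f_1)$, equivalently $\lambda' T(f_1) c = f_1 T(\lambda)$. That extra $c$ is not cosmetic: it is exactly what cancels the $c$ introduced by \cite[Lemma 2.13]{diffBundles} in the third equality above. If you attempted your hands-on equalizer approach with the wrong form of linearity, the computation would not close up.
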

\begin{proof}
It is a bundle morphism since by \cite[Lemma 2.12.iii]{diffBundles}, $\{f_1\}q = f_1T(q)p = q'(f_0p)$.  For linearity, consider
\begin{eqnarray*}
&   & \lambda'T(\{f_1\}) \\
& = & \lambda'\{T(f_1)c\} \mbox{ (by \cite[Lemma 2.13]{diffBundles})} \\
& = & \{\lambda'T(f_1)c\} \mbox{ (by \cite[Lemma 2.12.i]{diffBundles})} \\
& = & \{f_1 T(\lambda)cc\} \mbox{ ($f$ is linear by assumption)} \\
& = & \{f_1 T(\lambda)\} \\
& = & \{f_1\}\lambda \mbox{ (by \cite[Lemma 2.12.ii]{diffBundles})}
\end{eqnarray*}
as required.  
\end{proof}

A similar result holds for the other differential structure on $T(E)$:

\begin{proposition}\label{propBracketLinear2}(Bracketing of linear bundle morphisms {\rm II})
Suppose $\sf q'$ and $\sf q$ are differential bundles, with $(f_1,f_0): {\sf q'} \to {\sf p_E}$ a linear bundle morphism such that $f_1T(q) = f_1T(q)p0$.  Then $(\{f_1\},f_0q)$ is a linear bundle morphism from $\sf{q'}$ to $\sf q$.  
\end{proposition}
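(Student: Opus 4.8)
The plan is to follow the proof of Proposition \ref{propBracketLinear1} almost verbatim, keeping track of the two places where the bundle ${\sf p_E}$ behaves differently from $T({\sf q})$: a bundle morphism into ${\sf p_E}$ satisfies an equation involving $p_E$ rather than $T(q)$, and the lift of ${\sf p_E}$ is $\ell_E$ rather than $T(\lambda)c$.

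First one must check that $\{f_1\}$ is defined. Since $p$ is natural, $f_1T(q)p = f_1pq$, so the hypothesis $f_1T(q) = f_1T(q)p0$ is exactly the bracketing condition $f_1T(q) = f_1pq0$ for ${\sf q}$, and hence $\{f_1\}$ exists. For the bundle morphism requirement, \cite[Lemma 2.12.iii]{diffBundles} gives $\{f_1\}q = f_1T(q)p = f_1pq$ (naturality of $p$ again), and since $(f_1,f_0)$ is a bundle morphism into ${\sf p_E}$ we have $f_1p = q'f_0$; hence $\{f_1\}q = q'f_0q = q'(f_0q)$, as required.

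For linearity I would run the analog of the chain of equalities in the proof of Proposition \ref{propBracketLinear1}:
\[ \lambda'T(\{f_1\}) = \lambda'\{T(f_1)c\} = \{\lambda'T(f_1)c\} = \{f_1\ell c\} = \{f_1\ell\} = \{f_1\}\lambda, \]
where the first step is \cite[Lemma 2.13]{diffBundles}, the second is \cite[Lemma 2.12.i]{diffBundles}, the third uses that $(f_1,f_0)$ is linear into ${\sf p_E}$, whose lift is $\ell$, so that $\lambda'T(f_1) = f_1\ell$, the fourth uses the coherence $\ell c = \ell$ (which replaces the use of $cc = 1$ in Proposition \ref{propBracketLinear1}), and the last is \cite[Lemma 2.12.ii]{diffBundles}.

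The step requiring genuine care, and which I expect to be the main obstacle, is the final equality $\{f_1\ell\} = \{f_1\}\lambda$: here the bracketed map is $f_1\ell$, not the $f_1T(\lambda)$ appearing in Proposition \ref{propBracketLinear1}, and $f_1\ell$ and $f_1T(\lambda)$ do not agree in general even when $f_1$ satisfies the bracketing condition, although their brackets do. If \cite[Lemma 2.12.ii]{diffBundles} is not already stated in a form mentioning $\ell$, I would obtain the identity as follows: factor $f_1 = g\mu$ through the universality of the lift, with $g: E'\to E_2$ (so that $\{f_1\} = g\pi_0$), pull $g$ out of the bracket using \cite[Lemma 2.12.i]{diffBundles}, and reduce to the single identity $\{\mu\ell\} = \pi_0\lambda$, where the bracket on the left is taken with respect to $T({\sf q})$. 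This identity holds because the factorization of $\mu\ell$ through the universal map of $T({\sf q})$ is precisely the map $\langle\pi_0\lambda,\pi_1 0_E\rangle: E_2\to T(E_2)$ already appearing inside the definition of $\mu$; verifying this uses only the differential-bundle axiom $\lambda\ell = \lambda T(\lambda)$ and the additivity of $\lambda$ from ${\sf q}$ to $T({\sf q})$.
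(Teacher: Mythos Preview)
Your proposal is correct and tracks the paper's proof almost exactly through the chain $\lambda'T(\{f_1\}) = \{f_1\ell\}$, including the same citations of \cite[Lemma 2.12, 2.13]{diffBundles} and the same use of $\ell c = \ell$ in place of $cc = 1$.

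For the remaining identity $\{f_1\ell\} = \{f_1\}\lambda$ you are right that \cite[Lemma 2.12.ii]{diffBundles} does not apply verbatim; the paper also singles out this step. The difference is only organizational: the paper verifies directly that $\{f_1\}\lambda$ satisfies the defining property of $\{f_1\ell\}$ with respect to $T({\sf q})$, computing
\[
\langle \{f_1\}\lambda\, T(\lambda)c,\; f_1\ell\, p0\rangle\, T^2(\pq) \;=\; \langle \{f_1\}\lambda,\; f_1 p0\rangle\, T(\pq)\,\ell \;=\; f_1\ell,
\]
whereas you first factor $f_1$ through $\mu$ and reduce to the single instance $\{\mu\ell\} = \pi_0\lambda$. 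Unwinding your verification that $\langle\pi_0\lambda,\pi_1 0\rangle\,\mu_{T({\sf q})} = \mu\ell$ yields exactly the same manipulations (use of $\lambda T(\lambda) = \lambda\ell$, $\ell c = \ell$, and naturality of $\ell$), so the two arguments are the same computation packaged differently; your version has the minor advantage of isolating a reusable identity about $\mu$.
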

\begin{proof}
The pair $(\{f_1\},f_0q)$ is a bundle morphism since by \cite[Lemma 2.12.iii]{diffBundles},
	\[ \{f_1\}q = f_1T(q)p = f_1pq = q'(f_0q). \]
For linearity, by \cite[Lemma 2.13]{diffBundles} and the fact that $f_1$ is linear,
	\[ \lambda'T(\{f_1\}) = \lambda'\{T(f_1)c\} = \{\lambda'T(f_1)c\} = \{f_1 \ell c\} = \{f_1 \ell \} \]
To complete the proof of linearity, we thus wish to show that $\{f_1 \ell \}$ equals $\{f_1\}\lambda$.  Thus, it suffices to show that $\{f_1\}\lambda$ has the same universal property (\cite[Lemma 2.10.ii]{diffBundles}) as $\{f_1 \ell \}$ (Note that the bracketing of this last term is in relation to the lift $T(\lambda)c$).  Thus, we consider
\begin{eqnarray*}
&   & \<\{f_1\}\lambda T(\lambda)c,f_1\ell p0\>T^2(\pq) \\
& = & \<\{f_1\}\lambda \ell c,f_1p0\ell\>T^2(\pq) \mbox{ (coherence for $\lambda$ and naturality of $p0$)} \\
& = & \<\{f_1\}\lambda \ell ,f_1p0\ell\>T^2(\pq) \\
& = & \<\{f_1\}\lambda,f_1p0\>T(\pq)\ell \mbox{ (naturality of $\ell$)} \\
& = & f_1\ell
\end{eqnarray*}
Thus $\{f\}\lambda$ has the same universal property as $\lambda'T(\{f\}) = \{f \ell_E \}$, so $\{f\}\lambda = \lambda'T(\{f\})$, as required for linearity.   
\end{proof}


\section{Vertical connections}\label{secVertConn}

The first notion of connection on a differential bundle $\sf q$ we consider consists of a map $K: T(E) \to E$ which is a retract of the lift $\lambda: E \to T(E)$ and is appropriately linear.  This way of describing a connection seems to have first appeared in \cite{dombrowski}, with a detailed comparison of its relationship to the covariant derivative in \cite{patterson}.


\subsection{Definition and examples}

\begin{definition}\label{defnVertConnection}
Let $\sf q$ be a differential bundle on $E$ over $M$.  A \textbf{vertical descent} on $\sf q$ is a map $K: T(E) \to E$ which is a retract of $\lambda: E \to T(E)$.  A \textbf{vertical connection} on $\sf q$ is a vertical descent $K$ such that
\begin{enumerate}[{\bf [C.1]}]
	\item $(K,p):T({\sf q}) \to {\sf q}$ is a linear bundle morphism;
	\item $(K,q): {\sf p}_E \to {\sf q}$ is a linear bundle morphism.  
\end{enumerate}
A vertical connection is said to be \textbf{affine} when $\sf q$ is a tangent bundle ${\sf p}_M$ (so that $K: T^2(M) \to T(M)$).
\end{definition}

The definition in this form needs unwrapping.  The retract requirement is $\lambda K = 1_E$.  To ensure that the two bundle morphisms are linear morphisms requires that they are minimally bundle morphisms, that is $K q = T(q) p  = p q$.  To ensure they are linear requires that $\ell T(K) = K \lambda$ and, recalling that 
the lift of $T({\sf q})$ is $T(\lambda)c:T(E) \to T^2(E)$, that $T(\lambda)cT(K) = K \lambda$.  We collect these equations into the following:

\begin{lemma}\label{vertDescConditions}  A map $K: T(E)  \to E$ is a vertical connection for the bundle ${\sf q}$ if and only if the following equations are  satisfied:
\begin{enumerate}[(a)] 
\item $\lambda K= 1_E$;
\item $K q = p q$;
\item $K \lambda = \ell T(K)$;
\item $K \lambda = T(\lambda)c T(K)$.
\end{enumerate}
\end{lemma}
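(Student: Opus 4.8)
The plan is to unwind the two structural conditions \dr{C.1} and \dr{C.2} together with the retract requirement into the explicit equations (a)--(d), and then check that each equation is not only necessary but also sufficient. Since a vertical connection is \emph{by definition} a vertical descent satisfying \dr{C.1} and \dr{C.2}, the forward direction is essentially bookkeeping: the retract requirement $\lambda K = 1_E$ is exactly (a); the requirement that $(K,p)$ and $(K,q)$ be bundle morphisms gives $Kq = pq$ (using that $p$ and $q$ make the relevant squares commute, i.e.\ $K q = T(q) p = p q$), which is (b); linearity of $(K,p): T({\sf q}) \to {\sf q}$ means $K$ preserves the lifts, and since the lift of $T({\sf q})$ is $T(\lambda)c$ and the lift of ${\sf q}$ is $\lambda$, this says $T(\lambda)c\,T(K) = K\lambda$, which is (d); and linearity of $(K,q): {\sf p}_E \to {\sf q}$ says $\ell\,T(K) = K\lambda$, since the lift of ${\sf p}_E$ is $\ell_E$, giving (c).

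For the converse I would assume (a)--(d) and reassemble the structural statements. Condition (a) gives that $K$ is a vertical descent. For \dr{C.1}, I need $(K,p)$ to be a bundle morphism from $T({\sf q})$ to ${\sf q}$, i.e.\ $K q = T(q) p$; but $T(q)p = p q$ by naturality of $p$, so (b) supplies this, and then (d) is exactly the linearity condition $T(\lambda)c\,T(K) = K\lambda$, so $(K,p)$ is a linear bundle morphism. For \dr{C.2}, $(K,q)$ is a bundle morphism from ${\sf p}_E$ to ${\sf q}$ provided $K q = p q$, which is again (b) (note $p$ here is the projection $p_E$ of the bundle ${\sf p}_E$), and (c) is the linearity condition $\ell\,T(K) = K\lambda$. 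Hence $K$ satisfies \dr{C.1} and \dr{C.2} and is a vertical connection.

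The only subtlety worth spelling out --- and the closest thing to an obstacle --- is making sure the lift structures on the three bundles involved are correctly identified, so that ``preserves the lift'' translates to the stated equations: the lift of ${\sf q}$ is $\lambda$, the lift of $T({\sf q})$ is $T(\lambda)c$ (as recorded in Example \ref{basicdiffbundles}(iv) and reiterated just before the lemma), and the lift of ${\sf p}_E$ is $\ell_E$ (Example \ref{basicdiffbundles}(iii)). Once these are pinned down, there is nothing left to prove: the equivalence is a direct unpacking of Definition \ref{defnVertConnection} using the definition of linear bundle morphism, exactly as the paragraph preceding the lemma already indicates. I would therefore keep the proof short, essentially just citing the unwrapping discussion and noting that each of \dr{C.1}, \dr{C.2} decomposes into its bundle-morphism part (subsumed by (b) via naturality of $p$) and its linearity part ((c) and (d) respectively).
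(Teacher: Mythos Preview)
Your proposal is correct and matches the paper's approach exactly: the paper treats this lemma as a direct unwrapping of Definition~\ref{defnVertConnection}, carried out in the paragraph immediately preceding the statement, identifying the retract condition as (a), the bundle-morphism condition $Kq = T(q)p = pq$ as (b), and the two linearity conditions (using that the lifts of ${\sf p}_E$ and $T({\sf q})$ are $\ell_E$ and $T(\lambda)c$ respectively) as (c) and (d). There is no additional argument in the paper beyond what you have written.
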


A useful result will also be the following.
\begin{lemma}\label{lemmaVConnector}
If $K$ is a vertical connection on ${\sf q}$ then $\mu K = \pi_0$.
\end{lemma}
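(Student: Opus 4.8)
The plan is to unfold $\mu K$ directly from the definition $\mu = \langle \pi_0\lambda, \pi_1 0\rangle T(\pq)$ (where the $0$ here is $0_E: E \to T(E)$), using the fact that $K$ is \emph{additive} with respect to both relevant bundle structures. Recall that, by [C.1] and [C.2], the pairs $(K,p): T({\sf q}) \to {\sf q}$ and $(K,q): {\sf p}_E \to {\sf q}$ are linear bundle morphisms, hence additive (Proposition 2.16 of \cite{diffBundles}); in particular $K$ preserves the fibrewise addition of $T({\sf q})$ — which is $T(\pq): T(E_2) \to T(E)$, since $T$ preserves the pullback $E_2$ — and $K$ sends the zero $0_E$ of ${\sf p}_E$ to the zero $\zq$ of ${\sf q}$.

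Concretely, I would argue in three steps. First, since $(K,p)$ is additive, $T(\pq)K = (K\times K)\pq$ with $K \times K = \langle T(\pi_0)K, T(\pi_1)K\rangle : T(E_2) \to E_2$; composing with $\langle \pi_0\lambda,\pi_1 0\rangle$ (which indeed lands in $T(E_2) = T(E)\times_{T(M)}T(E)$, as $\pi_0\lambda T(q) = \pi_0 q0 = \pi_1 q0 = \pi_1 0 T(q)$) gives
\[ \mu K = \langle \pi_0\lambda K,\ \pi_1\, 0_E K\rangle\pq. \]
Second, Lemma \ref{vertDescConditions}(a) gives $\lambda K = 1_E$, and additivity of $(K,q)$ gives $0_E K = q\zq$, so $\mu K = \langle \pi_0,\ \pi_1 q\zq\rangle\pq$. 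Third, since $E_2$ is the pullback of $q$ along itself we have $\pi_0 q = \pi_1 q$, hence $\langle \pi_0, \pi_1 q\zq\rangle = \langle \pi_0, \pi_0 q\zq\rangle = \pi_0\langle 1_E, q\zq\rangle$, and the unit axiom for the additive bundle ${\sf q}$ (namely $\langle 1_E, q\zq\rangle\pq = 1_E$) yields $\mu K = \pi_0$.

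I do not expect a genuine obstacle here: the content is just invoking additivity of $K$ coming from \emph{both} [C.1] (for the $T(\pq)$ step) and [C.2] (for the $0_E$ step). The only thing requiring care is the bookkeeping with pullbacks — which bundle's addition or zero each symbol refers to, and the well-definedness of the various maps into $E_2$ and $T(E_2)$ — but in every instance this reduces to $\pi_0 q = \pi_1 q$, naturality of $0$, and $Kq = pq = T(q)p$ from Lemma \ref{vertDescConditions}(b).
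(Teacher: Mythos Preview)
Your proof is correct and is essentially identical to the paper's own proof: both unfold $\mu K$ using additivity of $(K,p)$ for the $T(\pq)$ step, then use $\lambda K = 1$ and $0_E K = q\zq$ (the latter from additivity of $(K,q)$), and finish with $\pi_0 q = \pi_1 q$ and the unit law for $\pq$. The paper compresses this into a single chain of equalities, but the content is the same.
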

\begin{proof}
Using the fact that $K$ is additive, we have
\begin{eqnarray*} 
\mu K & = & \<\pi_0 \lambda, \pi_1 0\>T(\pq)K = \< \pi_0 \lambda K, \pi_1 0 K\> \pq \\
& =  & \< \pi_0 \lambda K, \pi_1 q \zq \> \pq = \< \pi_0, \pi_0 q \zq \> \pq = \pi_0. 
\end{eqnarray*}
\end{proof}

Classically, every Cartesian space $\mathbb{R}^n$ has a canonical choice of connection (as a vector bundle over $1$); a similar result holds in tangent categories for differential objects.  We follow the notation for differential objects used in \cite[Section 3.1]{diffBundles}.  

\begin{proposition}\label{diffObjectVD}
In a Cartesian tangent category, if $A$ is a differential object (that is, a differential bundle over $1$) then $\hat{p}: T(A) \to A$, the principal projection, is a vertical connection on this differential bundle.
\end{proposition}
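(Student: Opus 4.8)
The plan is to verify the three conditions defining a vertical connection for $K = \hat{p}$ — the retract equation $\lambda K = 1_A$ together with the linearity conditions [C.1] and [C.2] — where $\sf q$ is the differential object structure on $A$, so that $E = A$ and $M = 1$. The key observation is that, because $A$ is a differential bundle over the terminal object $1$, almost everything is formal: the only genuine input needed is the one property of the principal projection recorded in \cite[Section 3.1]{diffBundles}, namely that $\lambda\hat{p} = 1_A$ — which is exactly the statement that $\hat p$ is a vertical descent.

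The bundle-morphism ("minimal") parts of [C.1] and [C.2] are immediate: they amount to $\hat{p}q = T(q)p_1$ and $\hat{p}q = p_Aq$ respectively, and in each case both sides are the unique map $T(A) \to 1$. Hence all the remaining content lies in the two linearity statements, and I would obtain both of them from Proposition \ref{constructingLinearMorphisms}(iii) without any calculation.

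For [C.2]: by Proposition \ref{examplesLinearMorphisms}(vi), $(\lambda, \zq)\colon {\sf q} \to {\sf p}_A$ is a linear bundle morphism, and it is a retract of the bundle morphism $(\hat{p}, q)\colon {\sf p}_A \to {\sf q}$, since
$$(\lambda, \zq)(\hat{p}, q) = (\lambda\hat{p},\, \zq q) = (1_A, 1_1) = \mathrm{id}_{\sf q},$$
using $\lambda\hat{p} = 1_A$ and the fact that $\zq$ is a section of $q$. Applying Proposition \ref{constructingLinearMorphisms}(iii) with $g = (\hat{p}, q)$ — for which $fg = \mathrm{id}_{\sf q}$ is linear by Proposition \ref{examplesLinearMorphisms}(i) — yields that $(\hat{p}, q)\colon {\sf p}_A \to {\sf q}$ is linear, which is [C.2]. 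The argument for [C.1] is identical, now starting from the linear bundle morphism $(\lambda, 0_1)\colon {\sf q} \to T({\sf q})$ of Proposition \ref{examplesLinearMorphisms}(v), which is a retract of $(\hat{p}, p_1)\colon T({\sf q}) \to {\sf q}$ because $(\lambda, 0_1)(\hat{p}, p_1) = (\lambda\hat{p},\, 0_1 p_1) = (1_A, 1_1) = \mathrm{id}_{\sf q}$ (here $0_1 p_1 = 1_1$ since zero sections are sections of the projection); Proposition \ref{constructingLinearMorphisms}(iii) then gives that $(\hat{p}, p_1)\colon T({\sf q}) \to {\sf q}$ is linear, which is [C.1].

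I do not expect any real obstacle here: once $\lambda\hat{p} = 1_A$ is in hand the proof is purely formal, the only thing to keep straight being the types of the various bundle morphisms — but since the bases involved are $1$, $T(1) = 1$ and $A$, these checks all collapse to "the unique map to $1$" or to standard identities like $0p = 1$. Alternatively, one could verify equations (a)–(d) of Lemma \ref{vertDescConditions} directly, using the isomorphism $T(A) \cong A \times A$ and the induced formulas for $\lambda$, $\ell$ and $c$ on a differential object from \cite[Section 3.1]{diffBundles}; but routing through Proposition \ref{constructingLinearMorphisms}(iii) sidesteps those computations entirely.
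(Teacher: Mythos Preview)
Your invocation of Proposition~\ref{constructingLinearMorphisms}(iii) is in the wrong direction, and this is a genuine gap rather than a terminological slip. In the paper's usage (compare the definition of a vertical descent: ``$K$ is a retract of $\lambda$'' means $\lambda K = 1$), the hypothesis of (iii) is that $f$ is a \emph{retraction}: there must exist $s:{\sf q'}\to{\sf q}$ with $sf = 1_{{\sf q'}}$. With your choice $f=(\lambda,\zq):{\sf q}\to{\sf p}_A$, that would require some $s$ with $s_1\lambda = 1_{T(A)}$. You do not provide such an $s$; what you verify is $(\lambda,\zq)(\hat p,q)=1_{\sf q}$, which says only that $(\lambda,\zq)$ is a \emph{section}. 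The proof of (iii) works by writing $g_1 = s_1 f_1 g_1$ and using that $f$ is split epi to cancel it on the left; with $f_1=\lambda$ a split mono one instead obtains $\lambda\,(\hat p\lambda)=\lambda\,(\ell\,T(\hat p))$, and cancelling $\lambda$ on the left would require it to be epi, which it is not (already for $A=\R^n$ in smooth manifolds $\lambda:a\mapsto(a,0)$ is not surjective). The same objection applies verbatim to your argument for [C.1].

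The paper's proof avoids this by using the one extra piece of structure that a differential object has beyond $\lambda\hat p=1$: the product decomposition $T(A)\cong A\times A$ with projections $\hat p$ and $p$. It then checks the two linearity equations $\ell\,T(\hat p)=\hat p\lambda$ and $T(\lambda)c\,T(\hat p)=\hat p\lambda$ componentwise, invoking the identities for $\hat p$ recorded in \cite[Propositions~3.4 and~3.6]{diffBundles}. This is precisely the ``alternative'' you mention at the end; it is not merely an alternative but, as far as I can see, necessary --- the purely formal route through Proposition~\ref{constructingLinearMorphisms}(iii) does not go through.
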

\begin{proof}
By \cite[Proposition 3.4]{diffBundles}, $\lambda \hat{p} = 1$ and $\hat{p}$ is an additive bundle morphism for both bundle structures on $T(A)$.  All that remains is to show it is a linear bundle morphism for these structures.  

For the first linearity, we need $\ell T(\hat{p}) = \hat{p}\lambda$.  These are both maps into $T(A)$, which is a product with projections $\hat{p}$ and $p$, so it suffices to show this equality holds when post-composing by these projections.  Indeed, again using \cite{diffBundles}, Proposition 3.4,
	\[ \ell T(\hat{p})\hat{p} = \hat{p} = \hat{p} \lambda \hat{p} \]
while using axioms of a tangent category,
	\[ \ell T(\hat{p})p = \ell p \hat{p} = p0\hat{p} = p ! \zq = ! \zq \]
and
	\[ \hat{p} \lambda p = \hat{p} ! \zq = ! \zq \]

For the second linearity, we need $T(\lambda)c T(\hat{p}) = \hat{p} \lambda$.  Again, we can check this by checking the equality when post-composing by the projections of $T(A)$: using Proposition 3.4 and 3.6 of \cite{diffBundles},
	\[ T(\lambda)c T(\hat{p})\hat{p} = T(\lambda)T(\hat{p})\hat{p} = T(\lambda \hat{p})\hat{p} = \hat{p} = \hat{p} \lambda \hat{p} \]
while
	\[ T(\lambda)c T(\hat{p})p = T(\lambda)cp\hat{p} = T(\lambda)T(p) \hat{p} =  T(\lambda p) \hat{p} = T(! \zq)\hat{p} = ! T(\zq) \hat{p} = ! \zq \]
which equals $\hat{p} \lambda p$ by the calculation above.
\end{proof}
We shall refer to this as the \textbf{canonical vertical connection} on $A$.  In Corollary \ref{diffObjectVDUnique} we shall see that this vertical connection is unique.  


As mentioned in the introduction, Patterson \cite[Theorem 1]{patterson} shows that in the category of smooth manifolds, giving a vertical connection on a vector bundle is equivalent to giving a (Koszul) connection \cite[pg. 227]{spivak2} on the vector bundle.  Thus, any connection in the standard sense is a vertical connection, so there is an abundance of connections in the classical case, the tangent category of smooth manifolds.  However, describing a connection in the form given above is probably unfamiliar to a general reader, and so in the next two examples we explicitly investigate the form of vertical connections on some standard smooth manifolds.    

\begin{example}\label{exampleVD}{\em ~
\begin{enumerate}[(1)] 
\item In the category of smooth manifolds, let us explicitly consider the form of an affine vertical connection on a Euclidean space $\mathbb{R}^n$:   it is a map 
$$K: T^2(\R^n)=\R^{4n} \to T(\R^n)=\R^{2n}; (w,v,y,x) \mapsto (k(w,v,y)_x, x)$$
Since $K$ is a section of $\ell$ we have $k(w,0,0)_x = w$.   We also know that $K$ is additive in two different ways.  Thus, we know $k(w_1+w_2,v_1+v_2,y)_x = k(w_1,v_1,y)_x+k(w_2,v_2,y)_x$ and $k(w_1+w_2,v,y_1+y_2)_x = k(w_1,v,y_1)_x+k(w_2,v,y_2)_x$.  This allows the following  reductions:
\begin{eqnarray*}
k(w,v,y)_x & = & k(w,0,y)_x + k(0,v,y)_x = k(w,0,0)_x + k(0,0,y)_x + k(0,v,y)_x \\ & = & w + k(0,0,1)_x  \cdot y+ k(0,1,1)_x  \cdot v \cdot y\\
k(w,v,y)_x & = & k(w,v,0)_x + k(0,v,y)_x = k(w,0,0)_x + k(0,v,0)_x + k(0,v,y)_x \\ & = & w + k(0,1,0)_x \cdot v + k(0,1,1)_x  \cdot v \cdot  y
\end{eqnarray*}
These imply that $k(0,0,1)_x \cdot y = k(0,1,0) \cdot v$; setting $y=v$ shows $k(0,0,1)_x = k(0,1,0)_x$, and setting $y=0$ shows that both functions are identically zero.  Thus, setting $\psi(x) := k(0,1,1)_x$ we have shown so far:
$$k(w,v,y)_x = w + \psi(x) \cdot v \cdot y$$
where minimally $\psi$ is a smooth function from $\R^n$ to bilinear forms on $\R^n$: recall that a bilinear form on $\R^n$ is just an $n \x n \x n$-matrix.  

There are two further equations to check, namely  $K \lambda = \ell T(K)$ and $K \lambda = T(\lambda)c T(K)$:
\begin{eqnarray*}
\lambda(K(w,v,y,x)) & = & \lambda(w+\psi(x) \cdot v \cdot y,x) = (w+ \psi(x) \cdot v \cdot y,0,0,x) \\
T(K)(\ell(w,v,y,x)) & = & T(K)(w,v,0,0,0,0,y,x) \\
& = & ((w'+ \psi(x) \cdot v'\cdot y + \psi(X) \cdot v \cdot y' + \left(\frac{\partial \psi(x)}{\partial x}(x) \cdot x'\right)\cdot v\cdot y)\\ & & ~~~~~[w/w',v/v',0/y',0/x',0/w,0/v,y/y,x/x], 0,0,x) \\
& = & (w+\psi(x) \cdot v\cdot y,0,0,x) \\
T(K)(c(T(\lambda)(w,v,y,x))) & = & T(K)(c(w,0,0,v,y,0,0,x)) = T(K)(w,0,y,0,0,v,0,x) \\
& = & ((w'+\psi(x) \cdot v'\cdot  y + \psi(X) \cdot v \cdot  y' + \left(\frac{\partial \psi(x)}{\partial x}(x) \cdot x'\right)\cdot v \cdot y)\\ & & ~~~~~[w/w',0/v',y/y',0/x',0/w,v/v,0/y,x/x], 0,0,x) \\
& = & (w+\psi(x) \cdot v \cdot y,0,0,x)
\end{eqnarray*}
So the general form of a vertical connection on $\R^n$ is:
$$K:T^2(\R^n) \to T(\R^n); (w,v,y,x) \mapsto (w + \psi(x) \cdot v \cdot  y,x)$$
Where $\psi(x)$ is a smooth map from $\R^n$ into $n \x n \x n$-matrices.   The values $\psi(x)_{(i,j,k)}$ of the matrix are the so called {\em Christoffel symbols\/} (of the second kind) \cite[pg. 186]{spivak2} of the connection $K$: they are often written $\Gamma^i_{jk}$.   

Every $\mathbb{R}^n$ has a canonical affine vertical connection as it is a differential object (see example \ref{exampleTrivialDiffBundleVD}).  The canonical affine vertical connection on $\R^n$ is when $\psi(x) = 0$ so is
$$K_0:T^2(\R^n) \to T(\R^n); (w,v,y,x) \mapsto (w,x).$$

\item The $n$-dimensional sphere in smooth manifolds is defined by 
$$S_n = \{ x | x \cdot x^T -1 = 0 \}$$
(where $x^T$ is the transpose of the vector $x$).  The tangent space of $S_n$ is obtained by applying the tangent functor to the equation and taking its kernel:
$$T(S_n) = \{  (y,x) | x \cdot x^T -1 = 0 , x \cdot y^T + y \cdot x^T =0 \}$$
The second equation can clearly be simplified to $y \cdot x^T = 0$, which says that the tangent space at the unit vector $x$ is orthogonal to $x$, as expected.  To obtain the second tangent space we differentiate again:
$$T^2(S_n) =\{ (w,v,y,x) | x \cdot x^T -1 = 0 , y \cdot x^T=0, v \cdot x^T = 0, v \cdot y^T + w \cdot x^T = 0 \}$$
The vertical lift has $\ell(y,x) = (y,0,0,x)$ as usual.

The canonical vertical connection for the sphere is then defined by 
$$K: T^2(S_n) \to T(S_n); (w,v,y,x) \mapsto (w + (v \cdot y^T) x, x)$$
where note that $(w + (v \cdot y^T) x) \cdot x^T = w \cdot x^T + (v \cdot y^T) (x \cdot x^T) = w \cdot x^T + v \cdot y^T = 0$.  Intuitively $w$ is orthogonally projected onto the tangent space.  This is clearly a section of the lift and a bundle morphism.  We must check the identities involving the lift:
\begin{eqnarray*}
\lambda(K(w,v,y,x)) & = & \lambda( w + (v \cdot y^T) x, x) = (w + (v \cdot y^T) x, 0,0,x) \\
T(K)(\ell(w,v,y,x)) & = & T(K)(w,v,0,0,0,0,y,x) \\
& = & ((w'+ (v' \cdot y^T)x + (v \cdot y'^T)x + (v.y^T)x')\\ & & ~~~~~[w/w',v/v',0/y',0/x',0/w,0/v,y/y,x/x],0,0,x) \\
& = & (w + (v \cdot y^T) x, 0,0,x) \\
T(K)(c(T(\lambda)(w,v,y,x))) & = & T(K)((w,0,y,0,0,v,0,x) \\
& = & ((w'+ (v' \cdot y^T)x + (v \cdot y'^T)x + (v.y^T)x')\\ & & ~~~~~[w/w',0/v',y/y',0/x',0/w,v/v,0/y,x/x],0,0,x) \\
& = & (w + (v \cdot y^T) x, 0,0,x)
\end{eqnarray*}
as required.  
\end{enumerate} }
\end{example}

In the next section, we shall see ways to construct new vertical connections out of existing ones, increasing our supply of examples.  


\subsection{Basic properties of vertical connections}

We begin by investigating three fundamental ways to construct vertical connections out of existing ones: by applying $T$, by pullback, and by retract.  

\begin{proposition}\label{tVertDescent}
Suppose that $K$ is a vertical connection on a differential bundle ${\sf q}$.  Then $cT(K)$ is a vertical connection on the differential bundle $T({\sf q})$.
\end{proposition}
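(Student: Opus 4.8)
The plan is to verify the four equations of Lemma \ref{vertDescConditions} for the candidate map $cT(K): T^2(E) \to T(E)$, viewed as a putative vertical connection on the differential bundle $T({\sf q}) = (T(q), T(\pq), T(\zq), T(\lambda)c)$. So I must check: (a) $T(\lambda)c \cdot cT(K) = 1_{T(E)}$; (b) $cT(K) \cdot T(q) = p_{T(E)} \cdot T(q)$; (c) $cT(K) \cdot T(\lambda)c = \ell_{T(E)} T(cT(K))$; and (d) $cT(K) \cdot T(\lambda)c = T(T(\lambda)c)c \cdot T(cT(K))$, where in (c) and (d) the relevant lift on the domain side is that of $T(T({\sf q}))$.

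First, for (a): $T(\lambda)c \cdot cT(K) = T(\lambda)T(K) = T(\lambda K) = T(1_E) = 1$, using $c^2 = 1$ and functoriality of $T$ together with equation (a) of Lemma \ref{vertDescConditions} for $K$. Second, for (b): $cT(K)T(q) = cT(Kq) = cT(pq) = cT(p)T(q) = pcT(q)$ — hmm, I need to be careful: naturality of $c$ gives $cT^2(q) = T^2(q)c$ and $cT(p_E) = p_{T(E)}$ is one of the coherences (the equation $cp = T(p)$, or rather $c_E p_{T(E)} = T(p_E)$), so $cT(K)T(q) = cT(Kq) = cT(pq) = cT(p)T(q) = p_{T(E)} T(q)$, as desired — actually I should double-check the direction, but this is routine coherence manipulation.

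The substantive part is equations (c) and (d), which involve the lifts of $T({\sf q})$ and of $T(T({\sf q}))$. These are exactly the kind of identities that follow from the tangent-category coherences relating $\ell$ and $c$ (namely $c^2 = 1$, $\ell c = \ell$, the coherence $\ell T(c) c = c T(\ell)$, naturality of $\ell$ and of $c$) combined with the two linearity equations (c) and (d) for $K$ itself. The strategy is to expand $T(cT(K))$ using functoriality, push the outer $c$'s through using naturality, and reduce the $\ell$-coherences until the expression collapses to $cT(K)T(\lambda)c$ via applying $T$ to equation (c) (resp. (d)) for $K$. I would write each of (c) and (d) as a short chain of equalities with the coherence or naturality law cited at each step.

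The main obstacle will be bookkeeping the tangent-category coherence identities correctly — in particular getting the right form of the $\ell$–$c$ interaction law and tracking which $T$-power each natural transformation lives at when it is submerged under applications of $T$ (e.g. distinguishing $T(c)$, $cT$, and the coherence $\ell T(c) c = c T(\ell)$ and its rearrangements). There is no conceptual difficulty; the risk is purely a sign/placement error in the string-rewriting. An alternative, cleaner route worth mentioning is to observe that $(c_E, c_M): T(T({\sf q})) \to T(T({\sf q}))$ is a linear bundle morphism by Proposition \ref{examplesLinearMorphisms}(viii), and that $(K,p): T({\sf q}) \to {\sf q}$ linear gives, by Proposition \ref{constructingLinearMorphisms}(ii), that $(T(K), T(p)): T(T({\sf q})) \to T({\sf q})$ is linear; composing, $(cT(K), cT(p)) = (cT(K), p)$ is a linear bundle morphism from $T(T({\sf q}))$ to $T({\sf q})$, which is precisely axiom [C.1] for $cT(K)$ on $T({\sf q})$. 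For [C.2], one similarly wants $(cT(K), q')$ — with $q'$ the appropriate base map — to be linear from ${\sf p}_{T(E)}$ to $T({\sf q})$, which should follow from the linearity of $(K,q): {\sf p}_E \to {\sf q}$ together with Proposition \ref{examplesLinearMorphisms}(vii) (the $(c_X,1)$ morphism between ${\sf p}_{T(X)}$ and $T({\sf p}_X)$) and Proposition \ref{constructingLinearMorphisms}(ii). Packaging the proof this way avoids the raw coherence grind entirely, so I would present it in that form, falling back on direct computation only where a clean bundle-morphism factorization is not immediately available.
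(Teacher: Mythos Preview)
Your proposal is correct, and the ``cleaner route'' you describe at the end is exactly the proof the paper gives: after checking the retract condition $T(\lambda)ccT(K)=1$ just as you do, the paper establishes {\bf [C.1]} by factoring $(cT(K),p)$ as $(c_E,c_M)$ followed by $(T(K),T(p))$ (Propositions \ref{examplesLinearMorphisms}(viii) and \ref{constructingLinearMorphisms}(ii)), and {\bf [C.2]} by factoring $(cT(K),T(q))$ as $(c,1): {\sf p}_{T(E)} \to T({\sf p}_E)$ followed by $(T(K),T(q))$ (Propositions \ref{examplesLinearMorphisms}(vii) and \ref{constructingLinearMorphisms}(ii)). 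The paper does not carry out the direct coherence computation for conditions (c) and (d) at all, so your instinct to present the factorization argument and skip the raw rewriting matches the paper's choice exactly.
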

\begin{proof}
The lift on $T({\sf q})$ is $T(\lambda)c$.    $cT(K)$ is then a retraction of that map since
	\[ T(\lambda)ccT(K) = T(\lambda)T(K) = T(\lambda K) = T(1) = 1. \]
For the first linearity condition, $(cT(K),p)$ is linear from $T^2({\sf q})$ to $T({\sf q})$ since it is the composite of the bundle morphisms
	\[ T^2({\sf q}) \to^{(c_E,c_M)} T^2({\sf q}) \to^{T(K),T(p)} T({\sf q}) \]
which are both linear by Proposition \ref{examplesLinearMorphisms}.  For the second linearity condition, $(cT(K),T(q))$ is linear from $\sf p_{T(E)}$ to $T({\sf q})$ since it is the composite of the bundle morphisms
	\[ {\sf p_{T(E)}} \to^{(c,1)} T({\sf p_E}) \to^{(T(K),T(q))} T({\sf q}), \]
which are again both linear by Proposition \ref{examplesLinearMorphisms}.  

\end{proof}

\begin{proposition}\label{pullbackVD}
Suppose that $K$ is a vertical connection on ${\sf q}$, a differential bundle over $M$.  Then for any map $f: X \to M$, $f^*K$ is a vertical connection on $f^*({\sf q})$, where $f^*K$ is defined by
		$$\xymatrix{ 
& T(f^{*}(E)) \ar[dd]|{\hole}^<<<<{T(f^{*}(q))} \ar[rr]^{T(f^{*}_E)} & & T(E) \ar[dd]^{T(q)} \ar[dl]_{K} \\ 
  f^*(E) \ar[dd]_{f^{*}(q)}  \ar@{<..}[ur]^{f^{*}(K)} \ar[rr]^>>>>>>>{f^{*}_E} & & E \ar[dd]^<<<<<<q  \\
  & T(X) \ar[dl]_{p} \ar[rr]_<<<<<<{T(f)}|>>>>>>>>>>>>{~~} & & T(M) \ar[dl]_{p} \\
  X  \ar[rr]_{f} & & M }$$
\end{proposition}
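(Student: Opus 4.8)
The plan is to verify the three defining clauses of a vertical connection for $f^*(K)$ — that it is a retract of $f^*(\lambda)$, that $(f^*(K),p)\colon T(f^*({\sf q}))\to f^*({\sf q})$ is a linear bundle morphism (\textbf{[C.1]}), and that $(f^*(K),f^*(q))\colon {\sf p}_{f^*(E)}\to f^*({\sf q})$ is a linear bundle morphism (\textbf{[C.2]}) — working directly from the universal description of $f^*(K)$. Here $f^*(K)$ is the unique map into the pullback $f^*(E)$ satisfying $f^*(K)f^*_E = T(f^*_E)K$ and $f^*(K)f^*(q) = T(f^*(q))p$; it is well defined since $T(f^*_E)Kq = T(f^*_E)T(q)p = T(f^*(q)f)p = T(f^*(q))pf$, using that $K$ is a bundle morphism, the pullback square $f^*_E q = f^*(q)f$, and naturality of $p$. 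Because the pullback defining $f^*(E)$ is preserved by $T$ (an assumption of Example \ref{basicdiffbundles}.v), both $(f^*_E,f^*(q))$ and $(T(f^*_E),T(f^*(q)))$ are jointly monic, so each equation below may be checked after postcomposing with these projection pairs.

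The one reusable fact I would isolate first is a reflection principle for linearity into a pullback bundle: \emph{if $(g_1,g_0)\colon {\sf r}\to f^*({\sf q})$ is a bundle morphism and $(g_1 f^*_E, g_0 f)\colon {\sf r}\to {\sf q}$ is linear, then $(g_1,g_0)$ is linear.} To prove $g_1 f^*(\lambda) = \lambda_{\sf r}T(g_1)$ one compares after postcomposing with $T(f^*_E)$ and with $T(f^*(q))$. Postcomposing with $T(f^*_E)$ and using the defining equation $f^*(\lambda)T(f^*_E) = f^*_E\lambda$ turns the claim into precisely the linearity of $(g_1 f^*_E, g_0 f)$; postcomposing with $T(f^*(q))$ is automatic, since using that $(f^*(\lambda),0)$ and $(\lambda_{\sf r},0)$ are additive bundle morphisms, that $(g_1,g_0)$ is a bundle morphism, and naturality of $0$, both sides collapse to $q_{\sf r}\,0\,T(g_0)$, where $q_{\sf r}$ denotes the projection of ${\sf r}$.

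With this in hand the three clauses follow from results already proved. The retract condition $f^*(\lambda)f^*(K) = 1$ is checked on projections: postcomposing with $f^*_E$ gives $f^*_E\lambda K = f^*_E$, and postcomposing with $f^*(q)$ gives $f^*(q)\,0\,p = f^*(q)$. For \textbf{[C.1]}, $(f^*(K),p)$ is a bundle morphism by one of the defining equations of $f^*(K)$, and by the reflection principle its linearity reduces to linearity of $(f^*(K)f^*_E, pf) = (T(f^*_E)K, T(f)p)$; but that is the composite of $(T(f^*_E),T(f))\colon T(f^*({\sf q}))\to T({\sf q})$ — linear by Proposition \ref{constructingLinearMorphisms}.ii applied to the linear morphism $(f^*_E,f)$ of Proposition \ref{examplesLinearMorphisms}.x — with $(K,p)\colon T({\sf q})\to {\sf q}$, linear by \textbf{[C.1]} for $K$, hence linear by Proposition \ref{constructingLinearMorphisms}.i. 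For \textbf{[C.2]}, $(f^*(K),f^*(q))$ is a bundle morphism because $f^*(K)f^*(q) = T(f^*(q))p = pf^*(q)$ by naturality of $p$, and by the reflection principle its linearity reduces to linearity of $(f^*(K)f^*_E, f^*(q)f) = (T(f^*_E)K, f^*_E q)$, which is the composite of $(T(f^*_E),f^*_E)\colon {\sf p}_{f^*(E)}\to {\sf p}_E$ — linear by Proposition \ref{examplesLinearMorphisms}.ix — with $(K,q)\colon {\sf p}_E\to {\sf q}$, linear by \textbf{[C.2]} for $K$.

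I expect the only genuinely delicate step to be the reflection principle, together with the bookkeeping of the universal property of $f^*(E)$ and of its $T$-image that supports it; once that is set up, every remaining verification is a one-line reduction to linearity facts from Propositions \ref{examplesLinearMorphisms} and \ref{constructingLinearMorphisms}, with the bundle-morphism conditions essentially immediate.
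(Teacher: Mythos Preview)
Your proof is correct and takes a genuinely different route from the paper's. The paper proceeds by direct computation: it writes $f^*(K)$ explicitly as $\langle T(f^*_E)K, T(f^*(q))p\rangle$ and then verifies the four equational conditions of Lemma~\ref{vertDescConditions} one by one, expanding each side using naturality of $\ell$, $p$, $0$, and $c$. Your approach instead isolates a reusable \emph{reflection principle} (linearity into $f^*({\sf q})$ is detected by linearity into ${\sf q}$ after postcomposing with $f^*_E$), and then discharges \textbf{[C.1]} and \textbf{[C.2]} by exhibiting $(f^*(K)f^*_E,\,\cdot)$ as a composite of linear morphisms already catalogued in Propositions~\ref{examplesLinearMorphisms} and~\ref{constructingLinearMorphisms}. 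This buys you modularity --- the reflection principle would serve equally well for other constructions into pullback bundles --- and avoids the longer chains of naturality/coherence rewrites that the paper's proof of conditions (c) and (d) requires. The paper's version, on the other hand, is self-contained and does not depend on assembling the right pair $(g_1,g_0)$ to feed into an auxiliary lemma. Both arguments ultimately rest on the same facts (the defining equations of $f^*(K)$ and $f^*(\lambda)$, naturality, and the hypotheses on $K$), so the difference is one of packaging rather than of mathematical content.
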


\begin{proof}
First, note that $f^*K$ is well-defined since by definition of $K$ and naturality of $p$,
	\[ T(f^*_E)Kq = T(f^*_E)pq = pf^*_Eq = pf^*(q)f = T(f^*(q))pf. \]
To show that $f^*K$ is a vertical connection, we will explicitly check the four conditions from Lemma \ref{vertDescConditions}.  For the first,
	\[ f^*(\lambda)f^*(K) = \<f^*_E\lambda, f^*(q)0\>\<T(f^*_E)K,T(f^*(q))p\> = \<f^*_E\lambda K, f^*(q)0p\> = \<f^*_E,f^*(q)\> = 1_{f^*E}. \]
For the second condition, by definition of $f^*(K)$,
	\[ f^*(K)f^*(q) = T(f^*(q))p = pf^*(q). \]
For the third and fourth conditions, we first calculate 
	\[ f^*(K)f^*(\lambda) = \<T(f^*_E)K,T(f^*(q))p\>\<f^*_E\lambda, f^*(q)0\> = \<T(f^*_E)K\lambda,T(f^*(q))p0\> (\dagger) \]
Then we have the third condition since
\begin{eqnarray*}
&   & \ell T(f^*(K)) \\
& = & \ell \<T^2(f^*_E)T(K),T^2(f^*(q))T(p)\> \\
& = & \<\ell T^2(f^*_E)T(K),\ell T^2(f^*(q))T(p)\> \\
& = & \<T(f^*_E)\ell T(K),T(f^*(q))\ell T(p) \> \mbox{ (by naturality of $\ell$)} \\
& = & \<T(f^*_E) K \lambda,T(f^*(q)) p0 \> \mbox{ (by condition on $K$ and coherence of $\ell$)} \\
& = & f^*(K)f^*(\lambda) \mbox{ (by $\dagger$)} \\
\end{eqnarray*}
Finally, for the fourth condition,
\begin{eqnarray*}
&   & T(f^*(\lambda))cT(f^*(K)) \\
& = & \<T(f^*_E)T(\lambda),T(f^*(q))T(0)\> c \<T^2(f^*_E)T(K),T^2(f^*(q))T(p) \> \\
& = & \<T(f^*_E)T(\lambda),T(f^*(q))T(0)\> \<T^2(f^*_E)c T(K),T^2(f^*(q))c T(p) \> \mbox{ (by naturality of $c$)} \\
& = & \<T(f^*_E)T(\lambda)c T(K),T(f^*(q))T(0)c T(p) \> \mbox{ ($T^2(f^*_E),T^2(f^*(q))$ are the projections of $T^2(f^*E)$)} \\
& = & \<T(f^*_E) K \lambda,T(f^*(q))T(0)p \> \mbox{ (condition on $K$ and coherence of $c$)} \\
& = & \<T(f^*_E) K \lambda,T(f^*(q))p0 \> \mbox{ (naturality of $p$)} \\
& = & f^*(K)f^*(\lambda) \mbox{ (by $\dagger$)} \\
\end{eqnarray*}
\end{proof}

\begin{example}\label{exampleTrivialDiffBundleVD}{\em  ~
By proposition \ref{diffObjectVD}, any differential object $A$ has a canonical vertical connection $\hat{p}: T(A) \to A$.  Thus for any $M$, whenever the product $A \times M$ exists, the following square is a pullback:
\[
\bfig
	\square<500,250>[A \times M`A`M`1;\pi_0`\pi_1`!`!] 
\efig
\]
and hence by the above result the bundle $\pi_1: A \times M \to M$ has a vertical connection given by pulling back the vertical connection $\hat{p}$ on $A$.   In particular:
\begin{itemize}
	\item Taking $M = A$ itself, we have $T(A) \cong A \times A$ with projections $\hat{p}$ and $p$, and so $p: T(A) \to A$ (ie., the differential bundle $\sf p_A$) acquires a connection, which we shall refer to as the \textbf{canonical affine vertical connection} on $A$.   We can explicitly calculate that the vertical connection in this case is 
		\[ T^2(A) \to^{\<T(\hat{p})\hat{p},pp\>} T(A) \]
	Note that this is typically not the only connection on $\sf p_A$: see Example \ref{exampleVD}(1).  
	\item In the category of smooth manifolds, any open subset $U \subseteq R^n$ has the property that $T(U) \cong U \times R^n$, so the tangent bundle of any open subset has a canonical vertical connection, with a formula similar to the previous example. 
\end{itemize}
}
\end{example}

\begin{proposition}\label{retractVD}
Suppose that we have a section/retraction pair, $\rhd^s_r: {\sf q} \to {\sf q'}$, in the category of differential bundles and linear maps:
\[
\bfig
	\morphism(0,0)|a|/{@{>}@/^1em/}/<500,0>[{\sf q'}`{\sf q};(s_1,s_0)]
	\morphism(500,0)|b|/{@{>}@/^1em/}/<-500,0>[{\sf q}`{\sf q'};(r_1,r_0)]
\efig
\]
Then if $K$ is a vertical connection on $\sf q$, then $T(s_1)Kr_1$ is a vertical connection on $\sf q'$.
\end{proposition}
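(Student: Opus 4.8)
The plan is to recognize $T(s_1)Kr_1$ as a composite of linear bundle morphisms, so that the two linearity conditions \textbf{[C.1]} and \textbf{[C.2]} fall out of Propositions \ref{examplesLinearMorphisms} and \ref{constructingLinearMorphisms}, leaving only the retract (vertical descent) equation $\lambda'(T(s_1)Kr_1) = 1_{E'}$ to verify directly. Note first that $T(s_1)Kr_1$ is at least a well-defined map $T(E') \to E'$.

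For \textbf{[C.1]}, the pair $(T(s_1),T(s_0)): T({\sf q'}) \to T({\sf q})$ is linear by Proposition \ref{constructingLinearMorphisms}(ii), $(K,p): T({\sf q}) \to {\sf q}$ is linear since $K$ is a vertical connection, and $(r_1,r_0): {\sf q} \to {\sf q'}$ is linear by hypothesis; composing (Proposition \ref{constructingLinearMorphisms}(i)) shows that $(T(s_1)Kr_1,\, T(s_0)pr_0): T({\sf q'}) \to {\sf q'}$ is linear. Since $T(s_0)pr_0 = ps_0r_0 = p$ by naturality of $p$ and $s_0r_0 = 1_{M'}$, this is exactly condition \textbf{[C.1]}. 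For \textbf{[C.2]}, the pair $(T(s_1),s_1): {\sf p}_{E'} \to {\sf p}_E$ is linear by Proposition \ref{examplesLinearMorphisms}(ix) (taking $f = s_1$), $(K,q): {\sf p}_E \to {\sf q}$ is linear since $K$ is a vertical connection, and $(r_1,r_0): {\sf q} \to {\sf q'}$ is linear; composing gives that $(T(s_1)Kr_1,\, s_1qr_0): {\sf p}_{E'} \to {\sf q'}$ is linear, and $s_1qr_0 = q's_0r_0 = q'$ because $(s_1,s_0)$ is a bundle morphism and $s_0r_0 = 1_{M'}$. This is condition \textbf{[C.2]}.

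Finally, for the vertical descent condition, linearity of $(s_1,s_0): {\sf q'} \to {\sf q}$ gives $\lambda'T(s_1) = s_1\lambda$, so
$$\lambda'T(s_1)Kr_1 = s_1\lambda Kr_1 = s_1r_1 = 1_{E'},$$
using $\lambda K = 1_E$ and $s_1r_1 = 1_{E'}$. By Lemma \ref{vertDescConditions} (together with the reformulation of vertical connections via \textbf{[C.1]}, \textbf{[C.2]}), this completes the proof. There is no real obstacle here beyond bookkeeping: the only point requiring a moment's care is tracking the base (object) components of the composites and checking that $T(s_0)pr_0$ reduces to $p$ and $s_1qr_0$ reduces to $q'$, which is where the section/retraction equations $s_0r_0 = 1_{M'}$ and the bundle-morphism identity $s_1q = q's_0$ are used.
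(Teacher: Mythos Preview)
Your proof is correct and follows essentially the same approach as the paper: establish the retract equation via linearity of $s$, and obtain \textbf{[C.1]} and \textbf{[C.2]} by exhibiting $T(s_1)Kr_1$ as a composite of the linear bundle morphisms $(T(s_1),T(s_0))$ (resp.\ $(T(s_1),s_1)$), $(K,p)$ (resp.\ $(K,q)$), and $(r_1,r_0)$, then simplifying the base components using naturality of $p$, the bundle condition on $s$, and $s_0r_0=1$. Your treatment is slightly more explicit about these base-component reductions than the paper's.
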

\begin{proof}
For the retract condition, we use the assumption that $s$ is linear:
		\[ \lambda'T(s_1)Kr_1 = s_1\lambda K r_1 = s_1r_1 = 1. \]
For the linearity conditions, by Propositions \ref{examplesLinearMorphisms} and \ref{constructingLinearMorphisms}, the following composites are linear:
	\[ T({\sf q'}) \to^{(T(s_1),T(s_0))} T({\sf q}) \to^{(K,p)} {\sf q} \to^{(r_1,r_0)} {\sf q'} \]
	\[ {\sf p_{E'}} \to^{(T(s_1),s_1)} {\sf p_E} \to^{(K,q)} {\sf q} \to^{(r_1,r_0)} {\sf q'} \]
But by naturality of $p$, the first composite is $(T(s_1)Kr_1,p)$, and by the bundle condition on $s$, the second composite is $(T(s_1)Kr_1,q')$, so we have the required linearity conditions.  
\end{proof}

A useful corollary of this last result is the following.

\begin{corollary}\label{retractAffineVD}
If $K$ is an affine vertical connection on $M$, and $M'$ is a retract of $M$, $\rhd^s_r : M' \to M$, then $T^2(s)KT(r)$ is an affine vertical connection on $M'$.
\end{corollary}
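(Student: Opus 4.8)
The plan is to present this as a direct instance of Proposition \ref{retractVD}, using the fact that the tangent functor carries the given retract $\rhd^s_r : M' \to M$ to a retract of the associated tangent differential bundles. Recall that, by definition, an affine vertical connection on $M$ is precisely a vertical connection on the differential bundle ${\sf p}_M$, and similarly an affine vertical connection on $M'$ is a vertical connection on ${\sf p}_{M'}$. So it suffices to exhibit a section/retraction pair between ${\sf p}_M$ and ${\sf p}_{M'}$ in the category of differential bundles and linear maps, and then apply Proposition \ref{retractVD} to it and $K$.

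Writing $s : M' \to M$ for the section and $r : M \to M'$ for the retraction (so that $sr = 1_{M'}$ in diagrammatic order), functoriality of $T$ gives $T(s)T(r) = T(sr) = 1_{T(M')}$. Moreover, by Proposition \ref{examplesLinearMorphisms}(ix), for any map $f : X \to Y$ the pair $(T(f),f)$ is a linear bundle morphism from ${\sf p}_X$ to ${\sf p}_Y$; applying this with $f = s$ and with $f = r$ shows that $(T(s),s) : {\sf p}_{M'} \to {\sf p}_M$ and $(T(r),r) : {\sf p}_M \to {\sf p}_{M'}$ are linear bundle morphisms. Together with the identity above, they constitute a section/retraction pair $\rhd^{(T(s),s)}_{(T(r),r)} : {\sf p}_M \to {\sf p}_{M'}$ in the category of differential bundles and linear maps.

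Applying Proposition \ref{retractVD} to $K$ (a vertical connection on ${\sf q} := {\sf p}_M$) with ${\sf q'} := {\sf p}_{M'}$, $(s_1,s_0) := (T(s),s)$ and $(r_1,r_0) := (T(r),r)$ now yields that $T(s_1)Kr_1 = T(T(s))\,K\,T(r) = T^2(s)KT(r)$ is a vertical connection on ${\sf p}_{M'}$, i.e.\ an affine vertical connection on $M'$, as claimed. There is no real obstacle here: the only thing to watch is the orientation, namely that $K$ must sit on the bundle ${\sf q} = {\sf p}_M$ from which we restrict, and that the section of the pair runs ${\sf q'} \to {\sf q}$, i.e.\ ${\sf p}_{M'} \to {\sf p}_M$, induced by the original section $s : M' \to M$; with this matched up correctly the rest is routine bookkeeping.
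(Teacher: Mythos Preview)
Your proof is correct and follows essentially the same approach as the paper: you invoke Proposition~\ref{examplesLinearMorphisms}(ix) to see that $(T(s),s)$ and $(T(r),r)$ are linear bundle morphisms between ${\sf p}_{M'}$ and ${\sf p}_M$, observe that functoriality of $T$ makes them a section/retraction pair, and then apply Proposition~\ref{retractVD}. This is exactly what the paper does, only spelled out in more detail.
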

\begin{proof}
By Proposition \ref{examplesLinearMorphisms}, applying $T$ to the section/retraction pair $(s,r)$ makes $(T(s),s)$ a section for $(T(r),r)$ in the category of differential bundles and linear maps; we then apply the previous result.
\end{proof}

The results above give one way of proving the classical result that every smooth manifold has an affine connection (see, for example, \cite[pg. 132]{diffGeoDynamicalSystems}).  Every smooth manifold is a smooth retract of an open subset of some $\mathbb{R}^n$ \cite[pg. 267]{lawvereToposesOfGraphs}; each of these open subsets have a canonical choice of vertical connection as described in example \ref{exampleTrivialDiffBundleVD}.  Thus by Corollary \ref{retractAffineVD}, every smooth manifold inherits a canonical choice of affine vertical connection.  

When a bundle has a vertical connection this immediately simplifies calculations for the maps involved in the universal property of the lift:

\begin{lemma}\label{lemmaKUniversality}
Suppose a differential bundle ${\sf q}$ has a vertical connection $K$.  For any $f: X \to T(E)$ with  $fT(q) = fT(q)p0$:
\begin{enumerate}[(i)]
	\item $f_{|\mu} = f\<K,p\>$;
	\item $\{f\} = fK$.
\end{enumerate}
and if $g: X \to T^2(E)$ satisfies $gT(p) = gT(p)p0$, then
\begin{enumerate}[(i)]
	\setcounter{enumi}{2}
	\item $\{g\}K = gT(K)K$.
\end{enumerate}
\end{lemma}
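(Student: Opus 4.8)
The plan is to exploit the characterization of the bracket operation via the universality of the lift. Recall that for $f\colon X\to T(E)$ with $fT(q)=fT(q)p0$, the map $f_{|\mu}\colon X\to E_2$ is the unique map with $f_{|\mu}\mu=f$, and $\{f\}=f_{|\mu}\pi_0$. So for part (i) I first need a candidate for $f_{|\mu}$, and the natural guess is $f\langle K,p\rangle\colon X\to E\times_M E$, where the pairing lands in $E_2$ because $fKq=fpq=fT(q)p=fT(q)p0\,p = fT(q)\cdot\mathrm{id}$... more carefully: $fKq = fpq$ and $fpq$ factors appropriately so that $\langle fK, fp\rangle$ is a legitimate map into $E_2$ (the pullback of $q$ along $q$). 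To verify this candidate equals $f_{|\mu}$, by the universal property it suffices to check $f\langle K,p\rangle\mu = f$. Expanding $\mu = \langle\pi_0\lambda,\pi_1 0\rangle T(\pq)$ and using that $K$ is additive together with $\lambda K = 1$ (Lemma \ref{vertDescConditions}(a)) and the argument pattern from Lemma \ref{lemmaVConnector}, this should collapse: $f\langle K,p\rangle\mu = f\langle K\lambda, p0\rangle T(\pq)$, and then I use that $(K,p)$ is an additive bundle morphism $T({\sf q})\to{\sf q}$ for the $(p_E,+_E,0_E)$ structure (condition \textbf{[C.2]} / Lemma \ref{vertDescConditions}(b,d)) to recognize $\langle K\lambda,p0\rangle T(\pq)$ as $\mu$-applied-after-something, or more directly verify the identity $\langle K\lambda, p0\rangle T(\pq) = \mathrm{id}_{T(E)}$ on the image of $f$. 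Part (ii) is then immediate: $\{f\} = f_{|\mu}\pi_0 = f\langle K,p\rangle\pi_0 = fK$.

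For part (iii), I apply parts (i) and (ii) at a higher level. Given $g\colon X\to T^2(E)$ with $gT(p)=gT(p)p0$, the term $\{g\}$ is the bracket computed with respect to the differential bundle $T({\sf q})$, whose lift is $T(\lambda)c$ and whose vertical connection (by Proposition \ref{tVertDescent}) is $cT(K)$. So part (ii), applied to the bundle $T({\sf q})$ with its connection $cT(K)$, gives $\{g\} = g\,cT(K)$. Now $\{g\}K = g\,cT(K)K$. The remaining task is to show $g\,cT(K)K = g\,T(K)K$, i.e. that the $c$ may be dropped. This should follow from the hypothesis $gT(p)=gT(p)p0$ — this is precisely the condition that makes $g$ land in the equalizer relevant to $\{g\}$ as computed in the bundle ${\sf p}_E$ rather than $T({\sf q})$, and I'd like to invoke naturality: $gcT(K)K$ versus $gT(K)K$. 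Actually the cleaner route: $\{g\}$ here is bracket in ${\sf p}_E$ (the relevant structure, since $g$ satisfies $gT(p) = gT(p)p0$, a $p$-condition not a $T(q)$-condition), so by part (ii) applied to ${\sf p}_E$ — whose vertical connection... hmm, ${\sf p}_E$ need not have a canonical vertical connection. I will instead use Lemma \ref{lemmaKUniversality}(ii) for the bundle $T({\sf q})$ and then verify $cT(K)K = T(K)K$ directly on maps satisfying the stated condition, using that $c T(K) = $ (by naturality of $c$ applied to $K\colon T(E)\to E$, since $K$ is a map of the underlying objects) relates to $T(K)c$ at the appropriate spot and the condition $gT(p) = gT(p)p0$ kills the discrepancy via $K\lambda = \ell T(K)$.

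The main obstacle I anticipate is part (iii): getting the bookkeeping right about which differential bundle structure the bracket $\{g\}$ is being taken with respect to, and correspondingly which lift/equalizer the hypothesis $gT(p)=gT(p)p0$ feeds into. Parts (i) and (ii) are short verifications against a universal property, with the only subtlety being the check that $f\langle K,p\rangle$ genuinely factors through $E_2$ (routine, from $Kq = pq$) and that $f\langle K,p\rangle\mu = f$ (a computation in the style of Lemma \ref{lemmaVConnector}, using additivity of $K$ and $\lambda K = 1$). For (iii) I expect to need one of the linearity axioms of $K$ — most likely \textbf{[C.2]}, $K\lambda = T(\lambda)cT(K)$ — precisely to commute $c$ past $T(K)$ when restricted to the sub-object cut out by the condition on $g$.
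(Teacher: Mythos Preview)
Your approach to (i) and (ii) is essentially correct but slightly more roundabout than the paper's. Rather than verifying $f\langle K,p\rangle\mu = f$ directly (which, as you sense, requires care since $\langle K\lambda,p0\rangle T(\pq)$ is \emph{not} the identity on all of $T(E)$), the paper simply observes that $\mu\langle K,p\rangle = 1_{E_2}$ (this is Lemma~\ref{lemmaVConnector} together with $\mu p = \pi_1$), whence $f_{|\mu} = f_{|\mu}\mu\langle K,p\rangle = f\langle K,p\rangle$. Part (ii) then follows exactly as you say.

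Your plan for (iii), however, has a genuine gap. You correctly diagnose the bookkeeping issue: the hypothesis $gT(p) = gT(p)p0$ places the bracket $\{g\}$ in the bundle ${\sf p}_E$, not in $T({\sf q})$, so invoking the vertical connection $cT(K)$ on $T({\sf q})$ is a mismatch, and ${\sf p}_E$ itself has no given vertical connection to which (ii) could be applied. The attempt to rescue this by showing $g\,cT(K)K = g\,T(K)K$ on the relevant subobject is not carried through, and it is not clear it can be made to work without essentially reproving the needed bracket identity. The paper's argument avoids all of this: since $(K,q)\colon {\sf p}_E \to {\sf q}$ is a \emph{linear} bundle morphism (condition \textbf{[C.2]}), the general bracket compatibility \cite[Lemma~2.12.ii]{diffBundles} gives $\{g\}K = \{gT(K)\}$, where the right-hand bracket is now taken in ${\sf q}$. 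One checks that $gT(K)$ satisfies the equalizer condition for ${\sf q}$ (using $Kq = pq$ and the hypothesis on $g$), and then part (ii) applied to $gT(K)$ yields $\{gT(K)\} = gT(K)K$. The linearity of $K$ is thus used not to commute $c$ past $T(K)$, but to transport the bracket from ${\sf p}_E$ to ${\sf q}$, where the vertical connection lives.
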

\begin{proof}
\begin{enumerate}[(i)]
\item $ \mu \<K,p\> = 1 \Rightarrow f_{|\mu} = f_{|\mu}\mu \<K,p\> =  f \<K,p\>$. 
\item $\{f\} = f_{|\mu}\pi_0 = f\<K,p\>\pi_0 = fK$. 
\item Since $K$ is a linear morphism, 
	$ \{g\}K = \{gT(K)\}$
by \cite[Lemma 2.12.ii]{diffBundles}.  But then by part (ii) of this lemma, $\{gT(K)\} = gT(K)K$.  
\end{enumerate}
\end{proof}

One standard way of defining a connection on a vector bundle $\sf q$ over $M$ involves giving a covariant derivative: an operation which takes as input a vector field on $M$ (that is, a section of $p: T(M) \to M$) and a section of $q$, and produces a section of $q$, satisfying various axioms.  Any vertical connection $K$ gives rise to such an operation.

\begin{definition}
For a differential bundle $\sf q$, let $\chi({\sf q})$ denote the sections of $q$.  
\end{definition}

\begin{definition}
Let $K$ be a vertical connection on a differential bundle $\sf q$.  The \textbf{covariant derivative} for $K$ is an operation
	\[ \nabla_K: \chi({\sf p_M}) \times \chi({\sf q}) \to \chi({\sf q}) \]
given by mapping $(w: M \to T(M), s: M \to E)$ to
	\[ \nabla_K (w,s) := M \to^{w} T(M) \to^{T(s)} T(E) \to^{K} E \]
\end{definition}
Note that $\nabla(w,s) \in \chi({\sf q})$ since
	\[ \nabla_K(w,s) = T(s)Kq = wT(s)T(q)p = wT(sq)p = wp = 1. \]
As mentioned earlier, in the category of smooth manifolds to give such an operation (satisfying the appropriate axioms) is equivalent to giving a vertical connection (see \cite{patterson}).  However, such a result does not hold in a general tangent category.


\subsection{Curvature}

Classically, every connection has an associated notion of curvature, described as an operation involving vector fields and sections of the differential bundle.   As discussed in \cite{patterson}, for a vertical connection on a differential bundle on $E$, a more appropriate notion of curvature is found by considering certain maps with domain $T^2(E)$.  We follow this idea to describe curvature of a connection in a tangent category.  

\begin{definition}
In a tangent category with a vertical connection $K$ on a differential bundle $\sf q$, define the \textbf{abstract curvature} of $K$ to be the map
	\[ D_K := \<cT(K)K,T(K)K\>: T^2(E) \to E_2 \]
Say that the vertical connection is \textbf{flat} if the abstract curvature $D_K$ factors through the diagonal (that is, if $cT(K)K = T(K)K$).  
\end{definition}

\begin{example}{\em ~
The canonical vertical connection on a differential object (Proposition \ref{diffObjectVD}) is flat, since by Proposition 3.6 of \cite{diffBundles}, $cT(\hat{p})\hat{p} = T(\hat{p})\hat{p}$.
}
\end{example}

Flatness is preserved by applying $T$ and by pulling back:

\begin{proposition} Suppose that $K$ is a flat vertical connection on $\sf q$.  Then:
\begin{enumerate}[(i)]
	\item $cT(K)$ is a flat vertical connection on $T({\sf q})$;
	\item for any $f: X \to M$, $f^*(K)$ is a flat vertical connection on $f^*({\sf q})$.
\end{enumerate}
\end{proposition}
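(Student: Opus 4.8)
The plan is to lean on Propositions~\ref{tVertDescent} and~\ref{pullbackVD}: they already establish that $cT(K)$ is a vertical connection on $T({\sf q})$ and that $f^*(K)$ is a vertical connection on $f^*({\sf q})$, so in each case the only thing left to prove is the flatness equation for the connection in question.

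For~(i), flatness of $cT(K)$ amounts to the equation $c\,T(cT(K))\,cT(K) = T(cT(K))\,cT(K)$ of maps $T^3(E) \to T(E)$, where the outer flip is $c_{T(E)}$. I would expand the left-hand side using functoriality of $T$, slide the inner flip past $T^2(K)$ by naturality of $c$, and then use the coherence $c\,T(c)\,c = T(c)\,c\,T(c)$ to move an extra copy of $T(c)$ to the front. This leaves the subword $T(c)\,T^2(K)\,T(K)$ exposed, which equals $T^2(K)\,T(K)$ since it is just $T$ applied to the flatness equation $c\,T(K)\,K = T(K)\,K$; undoing the naturality and functoriality rewrites then recovers $T(cT(K))\,cT(K)$. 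Notably only $T$ of flatness of $K$ is used, not flatness itself. The only real subtlety is keeping track of which instance of $c$ — $c_{T(E)}$ versus $T(c_E)$, at which iterate of $T$ — appears at each step, so that the coherence law is applied in the right place.

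For~(ii), flatness of $f^*(K)$ amounts to $c\,T(f^*(K))\,f^*(K) = T(f^*(K))\,f^*(K)$ as maps $T^2(f^*(E)) \to f^*(E)$. Since $f^*(E)$ is a pullback preserved by every $T^n$, two maps into $T^2(f^*(E))$ are equal as soon as they agree after post-composing with the two projections $T^2(f^*_E)$ and $T^2(f^*(q))$; so it suffices to verify the equation after each. Post-composing with $f^*_E$ and using the defining identity $f^*(K)\,f^*_E = T(f^*_E)\,K$ (together with functoriality of $T$ and naturality of $c$) collapses both sides to $T^2(f^*_E)\,T(K)\,K$, where flatness of $K$ is applied on one side. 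Post-composing with $f^*(q)$ and using $f^*(K)\,f^*(q) = T(f^*(q))\,p$ (from the proof of Proposition~\ref{pullbackVD}) reduces both sides to an expression built from $T^2(f^*(q))$ and iterates of $p$ and $c$; these coincide by the identity $c\,T(p) = p$ and naturality of $p$, so on this projection flatness of $K$ is not needed at all.

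The main obstacle is nothing deep — just the indexing bookkeeping in part~(i); part~(ii) is routine once the joint monicity of the projections out of $T^2(f^*(E))$ is observed.
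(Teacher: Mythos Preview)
Your argument is correct and is essentially the paper's own proof: for~(i) the paper carries out exactly the sequence you describe (functoriality, naturality of $c$, the coherence $cT(c)c = T(c)cT(c)$, then $T$ of flatness), and for~(ii) the paper writes $f^*(K) = K \times p$ and checks the two components simultaneously, which is the same as your projection-by-projection check. One small slip in your write-up of~(ii): the codomain of the flatness equation is $f^*(E)$, not $T^2(f^*(E))$, so the jointly monic projections you need are $f^*_E$ and $f^*(q)$ themselves (not $T^2(f^*_E)$ and $T^2(f^*(q))$)---but your actual verification already uses the correct ones, so this is just a wording error.
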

\begin{proof}
\begin{enumerate}[(i)]
	\item Proposition \ref{tVertDescent} established that $cT(K)$ is a vertical connection.  To show that it is flat if $K$ is, we calculate
	\begin{eqnarray*}
	&   & cT(cT(K))cT(K) \\
	& = & cT(c)T^2(K)cT(K) \\
	& = & cT(c)cT^2(K)T(K) \mbox{ (naturality of $c$)} \\
	& = & T(c)cT(c)T^2(K)T(K) \mbox{ (coherence of $c$)} \\
	& = & T(c)cT(cT(K)K) \\
	& = & T(c)cT(T(K)K) \mbox{ (flatness of $K$)} \\
	& = & T(c)cT^2(K)T(K)  \\
	& = & T(c)T^2(K)cT(K) \mbox{ (naturality of $c$)} \\
	& = & T(cT(K))cT(K)
	\end{eqnarray*}
	as required.  
	
	\item Proposition \ref{pullbackVD} established that $f^*(K) = K \times p$ is a vertical connection.  To show that it is flat if $K$ is, we calculate
	\begin{eqnarray*}
	&   & cT(K \times p)(K \times p) \\
	& = & c(T(K) \times T(p)(K \times p) \\
	& = & cT(K)K \times cT(p)p \\
	& = & T(K)K \times pp \mbox{ (flatness of $K$ and $c$ a bundle morphism)} \\
	& = & T(K)K \times T(p)p \mbox{ (naturality of $p$)} \\
	& = & T(K \times p)(K \times p)
	\end{eqnarray*}
	as required.
\end{enumerate}
\end{proof}

In particular, by (ii), the canonical affine vertical connection on a differential object (Example \ref{exampleTrivialDiffBundleVD}) is flat.  

In general, the retract of a flat vertical connection (see Proposition \ref{retractVD}) is not necessarily flat, since
	\[ cT(T(s_1)Kr_1)T(s_1)Kr_1 = cT^2(s_1)T(K)T(r_1)T(s_1)Kr_1 = T^2(s_1)cT(K)T(r_1s_1)Kr_1 \]
so that the idempotent $r_1s_1$ prevents bringing the $cT(K)$ and $K$ terms together.  

If the tangent category has negatives, we can take the difference of the two terms in the abstract curvature, giving the notion of curvature considered in \cite{patterson}.

\begin{definition}
In a tangent category with negatives with a vertical connection $K$ on a differential bundle $\sf q$, define the \textbf{curvature} of $K$ to be the map
	\[ C_K := cT(K)K - T(K)K: T^2(E) \to E \]
\end{definition}
Note that in this case, $K$ is flat if and only $C_K$ is identically zero (that is, $C_K = pp\zq$). 

Curvature is typically seen as a tensor; that is, an operation on vector fields of $M$ and sections of $E$.  The general notion considered above can be used to derive such an operation:
\begin{definition}
The \textbf{curvature tensor} $R_K$ of a vertical connection $K$ is an operation which takes two vector fields $w_1,w_2$ on $M$ and a section $s: M \to E$ of $q$ and produces a section of $E$ by
	\[ R_K(w_1,w_2,s) := w_2T(w_1)T^2(s)C_K. \]
\end{definition}
Note that if $K$ is flat then $R_K$ is identically zero.  

The following result shows that this derived operation is given by the standard expression in terms of the covariant derivative:

\begin{proposition}\label{standardCurvProp}
$R$ is given by the standard expression for the curvature tensor (eg., \cite[pg. 239]{spivak2}); that is, 
	\[ R_K(w_1,w_2,s) = \nabla_K(w_1,\nabla(w_2,s)) - \nabla_K(w_2,\nabla(w_1,s)) - \nabla_K([w_1,w_2],s) \]
(where $[w_1,w_2]$ is defined in \cite[Lemma 3.13]{sman3}).  
\end{proposition}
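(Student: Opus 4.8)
The plan is to expand both sides of the claimed equation into composites, cancel the summand they obviously share, and reduce everything to a single core identity that combines the definition of the Lie bracket with two of the vertical connection axioms. (Everything takes place in a tangent category with negatives, as is needed for $C_K$, the curvature tensor, and the bracket to be defined.)

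First I would unwind the three terms on the right. Since $\nabla_K(w_2,s) = w_2 T(s) K$ is a section of $q$, functoriality of $T$ gives
\[ \nabla_K(w_1,\nabla_K(w_2,s)) = w_1 T(w_2 T(s) K) K = w_1 T(w_2) T^2(s) T(K) K, \]
and symmetrically $\nabla_K(w_2,\nabla_K(w_1,s)) = w_2 T(w_1) T^2(s) T(K) K$, while $\nabla_K([w_1,w_2],s) = [w_1,w_2] T(s) K$. On the left, unwinding $C_K = cT(K)K - T(K)K$,
\[ R_K(w_1,w_2,s) = w_2 T(w_1) T^2(s)\, cT(K)K \;-\; w_2 T(w_1) T^2(s)\, T(K)K. \]
The summand $w_2 T(w_1) T^2(s) T(K) K$ occurs on both sides of the proposed equation, so cancelling it reduces the proposition to the core identity
\[ w_2 T(w_1) T^2(s)\, cT(K)K \;=\; w_1 T(w_2) T^2(s)\, T(K)K \;-\; [w_1,w_2] T(s) K. \]

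To prove the core identity I would first move the flip past $T^2(s)$ using naturality of $c$, rewriting the left side as $w_2 T(w_1)\, c\, T^2(s)\, T(K) K$. The point is that $w_2 T(w_1) c$ is precisely the composite appearing in the definition of the Lie bracket in \cite[Lemma 3.13]{sman3}: it and $w_1 T(w_2)$ become equal after post-composing with $T(p)$ and with $p$, so their difference factors through the vertical lift $\ell$, and that factorization is $[w_1,w_2]$; hence $w_2 T(w_1) c = w_1 T(w_2) - [w_1,w_2]\ell$ (modulo the sign and ordering convention used there). Substituting this and distributing over $T^2(s) T(K) K$, the left side becomes $w_1 T(w_2) T^2(s) T(K) K - [w_1,w_2]\ell\, T^2(s) T(K) K$, so it only remains to identify $[w_1,w_2]\ell\, T^2(s) T(K) K$ with $[w_1,w_2] T(s) K$. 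That follows from naturality of $\ell$ (giving $\ell T^2(s) = T(s)\ell$) together with axioms (c) and (a) of Lemma \ref{vertDescConditions}, which yield $\ell T(K) K = K\lambda K = K$. Beyond this, the argument is repeated use of functoriality of $T$ and naturality of $c$, $\ell$ and $p$; I expect the only real obstacle to be bookkeeping the flip $c$ correctly — in $C_K$ it sits on the $cT(K)K$ summand and so must end up paired with the ``out of order'' composite $w_2 T(w_1)$ — and matching this against the precise conventions for the bracket in \cite{sman3}. If that convention carries the opposite sign, the roles of $w_1$ and $w_2$ inside the bracket term swap accordingly, with no other change.
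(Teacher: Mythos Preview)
Your overall strategy---expand both sides, cancel the shared summand, and reduce to a single core identity---is sound, and the core identity you isolate is exactly the one the paper's proof establishes. The gap is in how you prove that identity.

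You assert that $w_1T(w_2)$ and $w_2T(w_1)c$ agree after post-composing with $T(p)$ and with $p$, ``so their difference factors through the vertical lift $\ell$'', i.e.\ $w_1T(w_2) - w_2T(w_1)c = [w_1,w_2]\ell$. This is false. Agreement after $T(p)$ and $p$ only tells you the difference equalizes $T(p)$ and $pp0$, so by the universality of the lift it factors through $\mu: T_2(M) \to T^2(M)$, not through $\ell$. Concretely, $(w_1T(w_2) - w_2T(w_1)c)p = w_2$, whereas anything of the form $g\ell$ has $g\ell p = gp0$, which lies in the image of $0$; so the factorization through $\ell$ cannot exist unless $w_2$ is trivial. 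The bracket $[w_1,w_2]$ is by definition the \emph{first} projection $\{w_1T(w_2) - w_2T(w_1)c\}$ of the $\mu$-factorization, and the second projection (namely $w_2$) does not vanish.

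The paper repairs exactly this step: rather than attempting an explicit $\ell$-factorization, it uses the bracketing calculus together with Lemma~\ref{lemmaKUniversality} to compute
\[
[w_1,w_2]T(s)K \;=\; \{w_1T(w_2)-w_2T(w_1)c\}T(s)K \;=\; \{(w_1T(w_2)-w_2T(w_1)c)T^2(s)\}K \;=\; (w_1T(w_2)-w_2T(w_1)c)T^2(s)T(K)K,
\]
the middle equality by \cite[Lemma 2.12.ii]{diffBundles} and the last by Lemma~\ref{lemmaKUniversality}(iii). That lemma is precisely what absorbs the ``extra'' $\mu$-component you were implicitly discarding. With this correction in place, your remaining manipulations (naturality of $c$, additivity of $T^2(s)$, $T(K)$, $K$) complete the proof.
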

\begin{proof}
First consider
\begin{eqnarray*}
	&   & \nabla_K([w_1,w_2],s) \\
	& = & \{w_1T(w_2) - w_2T(w_1)c\}T(s)K \\
	& = & \{(w_1T(w_2) - w_2T(w_1)c)T^2(s)\}K \mbox{ (by \cite[Lemma 2.12.ii]{diffBundles}} \\
	& = & (w_1T(w_2) - w_2T(w_1)c)T^2(s)T(K)K \mbox{ (by Lemma \ref{lemmaKUniversality})} \\
	& = & w_1T(w_2)T^2(s)T(K)K - w_2T(w_1)cT^2(s)T(K)K \mbox{ ($T^2(s), T(K),K$ all additive)} \\
	& = & w_1T(w_2)T^2(s)T(K)K - w_2T(w_1)T^2(s)cT(K)K \mbox{ (naturality of $c$)} \\
\end{eqnarray*}
Thus
\begin{eqnarray*}
&   & \nabla_K(w_1,\nabla(w_2,s)) - \nabla_K(w_2,\nabla(w_1,s)) - \nabla_K([w_1,w_2],s) \\
& = & w_1T(w_2)T^2(s)T(K)K - w_2T(w_1)T^2(s)T(K)K\\
& & ~~~~~ - (w_1T(w_2)T^2(s)T(K)K - w_2T(w_1)T^2(s)cT(K)K) \\
& = & w_2T(w_1)T^2(s)cT(K)K - w_2T(w_1)T^2(s)T(K)K \\
& = & w_2T(w_1)T^2(s)(cT(K)K - T(K)K) \\
& = & w_2T(w_1)T^2(s)C_K \\
& = & R_K(w_1,w_2,s)
\end{eqnarray*}
\end{proof}


\subsection{Torsion and the Bianchi identities}

When the vertical connection is affine, that is, when $\sf q$ is the tangent bundle on some object $M$ (and hence $K: T^2(M) \to T(M)$), then one can also consider appropriate generalizations of the notion of torsion.  As with curvature in a tangent category, the torsion of an affine vertical connection is better seen as a map involving $T^2(M)$, rather than as an operation on vector fields.  However, as we shall see, just as for curvature, the vector field operation can be derived from the general notion.  

\begin{definition}
In a tangent category  define the \textbf{abstract torsion} of $K$, for an affine vertical connection $K: T^2(M) \to T(M)$, to be the map
	\[ W_K := \< cK,K\>: T^2(M) \to^{} T_2(M) \]
Say that the vertical connection is \textbf{torsion-free} if the torsion factors through the diagonal (that is, if $cK = K$).  
\end{definition}

\begin{example}{\em ~
The canonical affine vertical connection on a differential object $A$ (Example \ref{exampleTrivialDiffBundleVD}) is torsion-free, since 
	\[ c\<T(\hat{p})\hat{p},pp\> = \<cT(\hat{p})\hat{p},cpp\> = \<T(\hat{p})\hat{p},T(p)p\> = \<T(\hat{p})\hat{p},pp\> \]
using Proposition 3.6 of \cite{diffBundles} and naturality of $p$.
}
\end{example}

\begin{example}{\em ~
An affine connection on a Euclidean space (Example \ref{exampleVD}.1) is torsion-free if and only if the Christoffel symbols of the second kind are symmetric; that is, if and only if for all $i,j,k$, $\Gamma^i_{jk} = \Gamma^{i}_{kj}$. 
}
\end{example}

\begin{example}{\em ~
The canonical vertical connection on the sphere (Example \ref{exampleVD}.2) is torsion-free since
	\[ w + (v \cdot y^T)x = w + (y\cdot v^T)x. \]
}
\end{example}

Torsion-freeness is preserved by retraction:

\begin{proposition}
Suppose that $K$ is a torsion-free affine vertical connection on $M$, and $M'$ is a retract of $M$ via the maps $s: M' \to M$, $r: M \to M'$.   Then $T^2(s)KT(r)$ is a torsion-free affine vertical connection on $M'$. 
\end{proposition}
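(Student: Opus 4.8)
The plan is to reduce almost everything to results already in hand. By Corollary~\ref{retractAffineVD}, $T^2(s)KT(r)$ is already known to be an affine vertical connection on $M'$, so the only new content to establish is torsion-freeness. Write $K' := T^2(s)KT(r)$ for brevity. Torsion-freeness of $K$ unwraps (via the definition of $W_K$) to the single equation $c_M K = K$, and torsion-freeness of $K'$ is the single equation $c_{M'} K' = K'$. So the entire task is to derive the latter from the former.

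First I would record the one naturality fact needed: since $c$ is a natural transformation $T^2 \Rightarrow T^2$, applying it to $s : M' \to M$ gives $c_{M'} T^2(s) = T^2(s) c_M$. Then the computation is immediate:
\[ c_{M'} K' = c_{M'} T^2(s) K T(r) = T^2(s) c_M K T(r) = T^2(s) K T(r) = K', \]
where the middle step uses naturality of $c$ and the third step uses the hypothesis that $K$ is torsion-free. That completes the proof.

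The only point requiring any care is keeping the subscripts on $c$ straight: the left-hand occurrence lives at level $M'$, and only after commuting past $T^2(s)$ does it become the $c_M$ that the torsion-free hypothesis on $K$ can be applied to. This is precisely what naturality supplies, so I do not anticipate any genuine obstacle; the argument is purely formal, exactly parallel to the proof of Corollary~\ref{retractAffineVD} but one axiom shorter.
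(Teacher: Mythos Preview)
Your proof is correct and essentially identical to the paper's own proof: both invoke Corollary~\ref{retractAffineVD} to get that $T^2(s)KT(r)$ is an affine vertical connection, then verify torsion-freeness via the one-line computation $cT^2(s)KT(r) = T^2(s)cKT(r) = T^2(s)KT(r)$ using naturality of $c$ and the hypothesis $cK = K$.
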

\begin{proof}
Proposition \ref{retractAffineVD} proves that $T^2(s)KT(r)$ is an affine vertical connection on $M'$.  To show that it is torsion-free if $K$ is straightforward using naturality of $c$:
	\[ cT^2(s)KT(r) = T^2(s)cKT(r) = T^2(s)KT(r). \]
\end{proof}

\begin{definition}
In a tangent category with negatives, given an affine vertical connection $K: T^2(M) \to T(M)$, define the \textbf{torsion} of $K$ to be the map
	\[ V_K := cK - K: T^2(M) \to T(M)\]
\end{definition}
Note that in this case $K$ is torsion-free if and only if $V_K$ is identically zero (that is, $V_K = pp0$).

\begin{definition}
For an affine vertical connection $K$, the \textbf{torsion tensor} of two vector fields $w_1,w_2: M \to T(M)$ is defined by
	\[ T_K(w_1,w_2) := w_2T(w_1)V_K. \]
\end{definition}

This agrees with the standard expression of the torsion tensor (eg., see \cite[pg. 236]{spivak2}):
\begin{proposition}
If $K$ is an affine connection with associated torsion $V_K$ then
	\[ T_K(w_1,w_2) = \nabla_K(w_1,w_2) - \nabla_K(w_2,w_1) - [w_1,w_2]. \]
\end{proposition}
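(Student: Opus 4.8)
The plan is to expand everything in sight and reduce the proposition to a single identity relating the Lie bracket to $K$. For the tangent bundle ${\sf p_M}$, unwinding the definitions gives $\nabla_K(w_1,w_2) = w_1 T(w_2) K$, $\nabla_K(w_2,w_1) = w_2 T(w_1) K$, and $T_K(w_1,w_2) = w_2 T(w_1) V_K = w_2 T(w_1) c K - w_2 T(w_1) K$. Thus the proposition follows once we show
\[ [w_1,w_2] = w_1 T(w_2) K - w_2 T(w_1) c K, \]
since then the right-hand side $\nabla_K(w_1,w_2) - \nabla_K(w_2,w_1) - [w_1,w_2]$ telescopes to $w_2 T(w_1) c K - w_2 T(w_1) K = T_K(w_1,w_2)$, all of this arithmetic taking place in the abelian group $\chi({\sf p_M})$ of vector fields on $M$ (available since we are in a tangent category with negatives).

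To establish the displayed identity I would follow the pattern of the proof of Proposition \ref{standardCurvProp}. Recall that the Lie bracket is computed by the bracketing operation of the tangent bundle, $[w_1,w_2] = \{w_1 T(w_2) - w_2 T(w_1) c\}$ (see \cite[Lemma 3.13]{sman3}); in particular $w_1 T(w_2) - w_2 T(w_1) c$ is by construction an argument to which $\{-\}$ applies, so it satisfies the side hypothesis of Lemma \ref{lemmaKUniversality}. Applying Lemma \ref{lemmaKUniversality}(ii) to the vertical connection $K$ on ${\sf p_M}$ then rewrites the bracket as $(w_1 T(w_2) - w_2 T(w_1) c) K$, and since $K$ is additive --- being a linear bundle morphism ${\sf p}_E \to {\sf q}$ by condition {\bf [C.2]} of Definition \ref{defnVertConnection}, hence additive by \cite[Proposition 2.16]{diffBundles} --- $K$ distributes over this difference to give $w_1 T(w_2) K - w_2 T(w_1) c K$, as wanted.

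I do not expect a genuine obstacle: the argument is simply the torsion counterpart of the curvature computation of Proposition \ref{standardCurvProp}, and is in fact shorter, because for an affine connection the bundle is the tangent bundle itself and there is no auxiliary section to carry along. The only point that needs care is bookkeeping of which additive bundle structure each difference lives over --- $w_1 T(w_2) - w_2 T(w_1) c$ over the projection of ${\sf p}_{T(M)}$, whereas $cK - K$ and the final telescoping are over $p_M$ --- so that the single appeal to additivity of $K$ and all the cancellations among sections are legitimate.
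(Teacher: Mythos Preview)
Your proposal is correct and follows essentially the same route as the paper: expand $[w_1,w_2]$ via Lemma \ref{lemmaKUniversality}(ii) as $(w_1T(w_2)-w_2T(w_1)c)K$, distribute $K$ by additivity, and then cancel against $\nabla_K(w_1,w_2)-\nabla_K(w_2,w_1)$ to leave $w_2T(w_1)(cK-K)=T_K(w_1,w_2)$. The paper's proof is organized in the same two steps and invokes the same lemma and additivity.
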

\begin{proof}
First consider
\begin{eqnarray*}
&   & [w_1,w_2] \\
& = & \{w_1T(w_2) - w_2T(w_1)c\} \\
& = & (w_1T(w_2) - w_2T(w_1)c)K \mbox{ (by lemma \ref{lemmaKUniversality})} \\
& = & w_1T(w_2)K - w_2T(w_1)cK \mbox{ ($K$ is additive)} \\
\end{eqnarray*}
So that
\begin{eqnarray*}
&   & \nabla_K(w_1,w_2) - \nabla_K(w_2,w_1) - [w_1,w_2] \\
& = & w_1T(w_2)K - w_2T(w_1)K - (w_1T(w_2)K - w_2T(w_1)cK) \\
& = & w_2T(w_1)cK - w_2T(w_1)K \\
& = & w_2T(w_1)(cK - K) \\
& = & T_K(w_1,w_2)
\end{eqnarray*}
\end{proof}

As a corollary we obtain another standard result:

\begin{corollary}\label{corConnectionLieBracket}
If $K$ is torsion-free, then
	\[ [w_1,w_2] = \nabla_K(w_1,w_2) - \nabla_K(w_2,w_1). \]
\end{corollary}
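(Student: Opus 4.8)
The plan is to read this off directly from the preceding proposition, which already establishes
\[ T_K(w_1,w_2) = \nabla_K(w_1,w_2) - \nabla_K(w_2,w_1) - [w_1,w_2] \]
as an identity in the abelian group $\chi({\sf p}_M)$ of vector fields on $M$ (recall that we are working in a tangent category with negatives, so the fibrewise addition $+_M$ makes $\chi({\sf p}_M)$ a group, and the subtractions above are taken there). It therefore suffices to show that, when $K$ is torsion-free, the left-hand side $T_K(w_1,w_2)$ is the zero vector field $0_M$, and then to add $[w_1,w_2]$ to both sides of the displayed identity.

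For the vanishing of $T_K(w_1,w_2)$, recall that torsion-freeness of $K$ was noted to be equivalent to $V_K$ being identically zero, i.e.\ $V_K = pp0$ as a map $T^2(M) \to T(M)$. Hence $T_K(w_1,w_2) = w_2T(w_1)V_K = w_2T(w_1)pp0$. Using naturality of $p$ (so that $T(w_1)p = pw_1$) together with the two section equations $w_1 p = 1_M$ and $w_2 p = 1_M$, we get $w_2 T(w_1) p p = w_2 p w_1 p = 1_M$, and therefore $w_2T(w_1)pp0 = 0_M$. Substituting $T_K(w_1,w_2) = 0_M$ into the proposition and rearranging yields the claim.

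There is no real obstacle here; this is a routine corollary. The only point needing (minimal) care is the bookkeeping that $w_2T(w_1)pp0$ genuinely is the additive identity of $\chi({\sf p}_M)$, which is exactly the short naturality-and-sections computation above.
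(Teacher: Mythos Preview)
Your proof is correct and follows essentially the same route as the paper: invoke the preceding proposition, use torsion-freeness to replace $V_K$ by $pp0$, and reduce $w_2T(w_1)pp0$ to $0_M$. You actually spell out the naturality and section steps more explicitly than the paper does, which simply writes $w_2T(w_1)pp0 = 0$ without further comment.
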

\begin{proof}
By the previous result, if $K$ is torsion-free, then
\begin{eqnarray*}
&   & \nabla_K(w_1,w_2) - \nabla_K(w_2,w_1) - [w_1,w_2] \\
& = & w_2T(w_1)V_K \\
& = & w_2T(w_1)pp0 \\
& = & 0
\end{eqnarray*}
so that $[w_1,w_2] = \nabla(w_1,w_2) - \nabla(w_2,w_1)$.
\end{proof}

If we have a torsion-free affine vertical connection, we can give an alternative expression for the curvature tensor: 

\begin{proposition}
If $K$ is a torsion-free affine connection, then 
	\[ R_K(w_1,w_2,s) = \nabla_K^2(w_1,w_2,s) - \nabla_K^2(w_2,w_1,s) \]
where
	\[ \nabla_K^2(w_1,w_2,s) := \nabla_K(w_1,\nabla_K(w_2,s)) - \nabla_K(\nabla_K(w_1,w_2),s). \]
\end{proposition}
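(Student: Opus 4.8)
The plan is to obtain this as a purely formal consequence of the Bianchi-style expression for the curvature tensor in Proposition~\ref{standardCurvProp} together with the torsion-free identity for the Lie bracket in Corollary~\ref{corConnectionLieBracket}, once we know that $\nabla_K$ is additive (and compatible with negation) in its vector-field argument.

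The only step needing genuine tangent-category bookkeeping is that last remark, so I would establish it first: for vector fields $u,u'$ on $M$ and a section $s$ of $q$, $\nabla_K(u-u',s) = \nabla_K(u,s) - \nabla_K(u',s)$. Unwinding the definition, $\nabla_K(u,s) = uT(s)K$, so it suffices to see that $u \mapsto uT(s)K$ respects sums and negatives. Since $u,u'$ are sections of $p$, naturality of $p$ gives $uT(s)p = ups = s = u'T(s)p$, so $uT(s)$ and $u'T(s)$ agree after $p$; naturality of the bundle addition then gives $(u+u')T(s) = \langle uT(s), u'T(s)\rangle +$. Now condition~\textbf{[C.2]} says $(K,q): {\sf p}_E \to {\sf q}$ is linear, hence additive by \cite[Proposition 2.16]{diffBundles}; and since the ambient tangent category has negatives, its additive bundles have abelian-group fibres, so this additive morphism is a fibrewise group homomorphism and carries $+$ to $+$ and negation to negation. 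Post-composing with $K$ gives the claimed additivity of $\nabla_K(-,s)$.

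With this in hand the rest is rearrangement. Applying the additivity just proved to the identity $[w_1,w_2] = \nabla_K(w_1,w_2) - \nabla_K(w_2,w_1)$ of Corollary~\ref{corConnectionLieBracket} yields
\[ \nabla_K([w_1,w_2],s) = \nabla_K(\nabla_K(w_1,w_2),s) - \nabla_K(\nabla_K(w_2,w_1),s). \]
Expanding $\nabla_K^2(w_1,w_2,s) - \nabla_K^2(w_2,w_1,s)$ by the definition of $\nabla_K^2$ produces four terms which regroup as
\[ \left( \nabla_K(w_1,\nabla_K(w_2,s)) - \nabla_K(w_2,\nabla_K(w_1,s)) \right) - \left( \nabla_K(\nabla_K(w_1,w_2),s) - \nabla_K(\nabla_K(w_2,w_1),s) \right), \]
and substituting the previous display into the second parenthesized term leaves $\nabla_K(w_1,\nabla_K(w_2,s)) - \nabla_K(w_2,\nabla_K(w_1,s)) - \nabla_K([w_1,w_2],s)$, which is exactly $R_K(w_1,w_2,s)$ by Proposition~\ref{standardCurvProp}.

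I expect the additivity step to be the only real obstacle, and within it the only subtlety is tracking which maps agree after which projections $p$, so that naturality of $+$ and the additivity of $K$ for the ${\sf p}_E$-structure genuinely apply; one should also note that all the terms being combined above are sections of $p$, so the fibrewise sums and differences are well defined. Everything else is formal substitution.
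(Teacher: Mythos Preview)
Your proof is correct and follows essentially the same strategy as the paper: both reduce, via Proposition~\ref{standardCurvProp}, to the identity $\nabla_K([w_1,w_2],s) = \nabla_K(\nabla_K(w_1,w_2),s) - \nabla_K(\nabla_K(w_2,w_1),s)$, and both derive this from the torsion-free expression for the Lie bracket. The only difference is cosmetic: you isolate the additivity of $u \mapsto \nabla_K(u,s)$ as a preliminary lemma and then apply Corollary~\ref{corConnectionLieBracket} directly, whereas the paper unfolds the definitions $\nabla_K(\nabla_K(w_i,w_j),s) = w_iT(w_j)KT(s)K$, inserts a $c$ using $cK=K$, factors as $(w_1T(w_2)-w_2T(w_1)c)KT(s)K$, and recognises the bracket via Lemma~\ref{lemmaKUniversality}. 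Your packaging is a little cleaner conceptually; the paper's direct computation avoids the separate additivity discussion.
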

\begin{proof}
By using proposition \ref{standardCurvProp}, it suffices to show that
	\[ \nabla_K([w_1,w_2],s) = \nabla_K(\nabla_K(w_1,w_2),s) - \nabla_K(\nabla_K(w_2,w_1),s). \]
Indeed,
\begin{eqnarray*}
&   & \nabla_K(\nabla_K(w_1,w_2),s) - \nabla_K(\nabla_K(w_2,w_1),s) \\
& = & w_1T(w_2)KT(s)K - w_2T(w_1)KT(s)K \\
& = & w_1T(w_2)KT(s)K - w_2T(w_1)cKT(s)K \mbox{ (since $K$ torsion-free)} \\
& = & (w_1T(w_2) - w_2T(w_1)c)KT(s)K \\
& = & [w_1,w_2]T(s)K \mbox{ (by corollary \ref{corConnectionLieBracket}}) \\
& = & \nabla_K([w_1,w_2],s)
\end{eqnarray*}
\end{proof}

Returning to the more general curvature $C_K = cT(K)K - T(K)K$, we can prove several important identities for this definition when $K$ is a torsion-free affine vertical connection.   
\begin{theorem}
In a tangent category with negative, if $K: T^2(M) \to T(M)$ is a torsion-free affine vertical connection and $C = C_K$ is its associated curvature then
\begin{enumerate}[(i)]
	\item (Anti-symmetry) $cC = -C$;
	\item (First Bianchi identity) 	
		\[ C + cT(C)C + T(c)cC = 0. \]
	\item (Second Bianchi identity) 	
		\[ D_K(C) + T(c)T^2(c)D_K(C) + T^2(c)T(c)D_K(C) = 0, \] 
	where $D_K(C) = T(C)K$.  
\end{enumerate}
\end{theorem}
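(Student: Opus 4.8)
The plan is to verify all three parts by direct computation with maps out of the iterated tangent bundles $T^3(M)$ and $T^4(M)$, using the following toolkit: the four defining equations of an affine vertical connection, obtained from Lemma~\ref{vertDescConditions} by taking $\lambda = \ell_M$ and $q = p_M$ (so $\ell_M K = 1$, $K p_M = p\,p_M$, $K\ell_M = \ell\, T(K)$, and $K\ell_M = T(\ell)\,c\,T(K)$); the fact from Proposition~\ref{tVertDescent} that $cT(K)$ is a vertical connection on $T({\sf p}_M)$, which supplies the ``shifted'' copies of those equations; the torsion-free hypothesis $cK = K$ together with its images $T(c)T(K) = T(K)$ and $T^2(c)T^2(K) = T^2(K)$ under $T$ and $T^2$; and the coherences for the flip and the lift — $cc = 1$, $\ell c = \ell$, naturality of $c$ and of $\ell$, and the axiom $\ell\,T(c)\,c = c\,T(\ell)$ — which let the various copies of $c$ living on $T^3(M)$ and $T^4(M)$ be brought into a common normal form. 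Part~(i) is then immediate, and does not even need torsion-freeness: since $C = cT(K)K - T(K)K$ and $cc = 1$,
\[ cC \;=\; c\,c\,T(K)K \,-\, c\,T(K)K \;=\; T(K)K \,-\, cT(K)K \;=\; -C. \]

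For part~(ii) I would substitute $C = cT(K)K - T(K)K$ into each of the three summands of $C + cT(C)C + T(c)cC$, expand using functoriality of $T$, and then push every flip to the left past the occurrences of $T(K)$ and $T^2(K)$ by naturality of $c$, cancelling adjacent pairs via $cc = 1$. After this each summand is an alternating sum of composites of the single shape $(\mathrm{flip})\cdot T^2(K)\cdot T(K)\cdot K$, and the two ``Leibniz'' equations $K\ell = \ell T(K)$ and $K\ell = T(\ell)cT(K)$ together with $cK = K$ identify the remaining flips with representatives of the $S_3$-action on $T^3(M)$; the three summands then become, up to sign, the three cyclic images of a single composite $\Gamma: T^3(M)\to T(M)$. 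Finally the coherence $\ell\,T(c)\,c = c\,T(\ell)$ — the abstract stand-in for the Jacobi identity — together with the anti-symmetry from part~(i) forces $\Gamma + \sigma\Gamma + \sigma^2\Gamma$ to telescope to the appropriate zero section. This is the shadow of the classical argument, in which torsion-freeness is applied three times and the Jacobi identity for the Lie bracket once.

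Part~(iii) follows the same strategy one level up. I would first record $T(C) = cT^2(K)T(K) - T^2(K)T(K)$ (just $T$ of the definition of $C$, using functoriality and naturality of $c$), so that $D_K(C) = T(C)K$ becomes an explicit alternating composite of $T^3(K), T^2(K), T(K), K$; then substitute this into the three summands of the second Bianchi expression and repeat the normalization, now moving the flips $T(c)$ and $T^2(c)$ across $T^2(K)$ and $T^3(K)$ using $T(c)T(K) = T(K)$, $T^2(c)T^2(K) = T^2(K)$ and the $T$-image of $\ell\,T(c)\,c = c\,T(\ell)$, after which the three summands again reduce to the cyclic images of a common composite and cancel. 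The main obstacle throughout is bookkeeping rather than ideas: on $T^3(M)$ and $T^4(M)$ there are several distinct flips ($c_M$, $c_{T(M)}$, $T(c_M)$ and their $T$-images), and the delicate part is tracking precisely which flip occurs where and how each migrates past $T(K)$, $T^2(K)$ and $T^3(K)$; once every flip is in normal form, the torsion-free equation and the $c$-coherences dictate which terms cancel and the cyclic sums collapse essentially automatically.
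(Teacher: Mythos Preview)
Your part~(i) is correct and matches the paper exactly.

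For parts~(ii) and~(iii), your overall strategy (expand, normalize the flips, cancel in pairs) is right, but you are reaching for the wrong tools. The equations $K\ell = \ell T(K)$ and $K\ell = T(\ell)cT(K)$ and the coherence $\ell T(c)c = cT(\ell)$ are irrelevant here: no $\ell$ appears in $C$, in $D_K(C)$, or in any of the cyclic prefactors, so there is no way to invoke these identities. The entire argument lives in the world of $c$ and $K$ alone. The only facts actually used are $c^2 = 1$, the Yang--Baxter coherence $cT(c)c = T(c)cT(c)$ (and its $T$-image $T(c)T^2(c)T(c) = T^2(c)T(c)T^2(c)$), and torsion-freeness in the form $T(c)T(K) = T(K)$ (for~(ii)) and $T^2(c)T^2(K) = T^2(K)$ (for~(iii)). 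Once you write $Q := T^2(K)T(K)K$, part~(iii) becomes $D_K(C) = T(c)Q - Q$ and $T^2(c)Q = Q$, and the six-term expansion telescopes immediately; part~(ii) is the analogous six-term cancellation with $cT(K)K$ and $T(K)K$. Note also that your description of~(iii) mentions $T^3(K)$, but $D_K(C) = T(C)K$ involves only $T^2(K)$, $T(K)$ and $K$, so the maps live on $T^3(M)$, not $T^4(M)$.

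Finally, the middle summand in~(ii) as printed, $cT(C)C$, does not type-check (domains $T^3(M)$ versus $T^2(M)$); it should read $cT(c)C$, and the paper's proof is for that corrected expression.
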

\begin{proof}
\begin{enumerate}[(i)]
	\item Follows directly from $c^2 = 1$.
	\item Consider
	\begin{eqnarray*}
	&   & C + cT(c)C + T(c)cC \\
	& = & (cT(K)K - T(K)K) + cT(c)(cT(K)K - T(K)K) + T(c)c(cT(K)K - T(K)K) \\
	& = & cT(K)K - T(K)K + cT(c)cT(K)K - cT(c)T(K)K + T(c)T(K)K - T(c)cT(K)K \\
	& = & cT(K)K - T(K)K + T(c)cT(c)T(K)K \\ & & ~~~~~ - cT(c)T(K)K + T(c)T(K)K - T(c)cT(K)K \\
	& = & cT(K)K - T(K)K + T(c)cT(K)K \\ & & ~~~~~ - cT(K)K + T(K)K - T(c)cT(K)K \mbox{ (since $K$ torsion-free)} \\
	& = & 0
	\end{eqnarray*}
	\item For simplicity, let $Q = T^2(K)T(K)K$, so $D_K(C) = T(c)Q - Q$.  Note that since $K$ is torsion-free, $T^2(c)Q = Q$.  Then consider
	\begin{eqnarray*}
	&   & D_K(C) + T(c)T^2(c)D_K(C) + T^2(c)T(c)D_K(C) \\
	& = & (T(c)Q - Q) + T(c)T^2(c)(T(c)Q - Q) + T^2(c)T(c)(T(c)Q - Q) \\
	& = & T(c)Q - Q + T(c)T^2(c)T(c)Q - T(c)T^2(c)Q + T^2(c)Q - T^2(c)T(c)Q \\
	& = & T(c)Q - Q + T^2(c)T(c)T^2(c)Q - T(c)T^2(c)Q + T^2(c)Q - T^2(c)T(c)Q \\
	& = & T(c)Q - Q + T^2(c)T(c)Q - T(c)Q + Q - T^2(c)T(c)Q \mbox{ (since $T^2(c)Q = Q$)} \\
	& = & 0
	\end{eqnarray*}
\end{enumerate}
\end{proof}


\subsection{Finsler connections}

In this section, we shall briefly investigate an alternative way of defining a vertical connection. It involves the bundle $q^{*}({\sf q})$, which is referred to as the Finsler bundle in \cite{SLK}.  The  Finsler bundle is the differential bundle structure induced on $\pi_1: E_2 \to E$ as the bundle obtained by pulling back ${\sf q}$ along $q$.  The lift $\lambda_{q^{*}({\sf q})}$ of this bundle is given as the unique map  fitting in:
$$\xymatrix{ & T(E_2) \ar[dd]^<<<<<<<{T(\pi_1)}|\hole  \ar[rr]^{T(\pi_0)}  && T(E) \ar[dd]^{T(q)} \\
                   E_2 \ar@{..>}[ur]^{\lambda_{q^{*}({\sf q})}} \ar[dd]_{\pi_1} \ar[rr]_>>>>>>>{\pi_0}  & & E \ar[dd]^<<<<<<<{q} \ar[ru]^{\lambda} \\
                     & T(E) \ar[rr]_<<<<<<<{T(q)}|\hole  & & T(M) \\
                    E \ar[ru]_{0} \ar[rr]_{q} & & M \ar[ru]_{0}}$$
and so $\lambda_{q^{*}({\sf q})} := \< \pi_0\lambda,\pi_10\> = \lambda \x_M 0$.  Clearly the additive structure of this bundle is given, as expected, by addition on the first coordinate.
 
 We observe that there is the following important linear bundle morphism from the Finsler bundle to the tangent bundle of $E$:
 
 \begin{lemma}\label{muLinearity} 
For any differential bundle ${\sf q}$ 
           $$(\mu, 1_E) : q^{*}({\sf q}) \to {\sf p}_E$$
is a linear bundle morphism.
 \end{lemma}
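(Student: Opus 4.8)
The plan is to verify the two required conditions directly: first that $(\mu, 1_E)$ is a bundle morphism, i.e.\ that $\mu T(q) = \pi_1 q$ (recalling $\pi_1$ is the bundle projection for the Finsler bundle $q^*({\sf q})$ and $T(q)$ is the projection for ${\sf p}_E$), and second that it is linear, i.e.\ that it preserves the lifts: $\mu T(\ell_E) = \lambda_{q^*({\sf q})} T(\mu)$, where $\lambda_{q^*({\sf q})} = \langle \pi_0\lambda, \pi_1 0\rangle$. The first condition is essentially immediate: by definition $\mu = \langle \pi_0\lambda, \pi_1 0\rangle T(\pq)$, so $\mu T(q) = \langle \pi_0\lambda, \pi_1 0\rangle T(\pq) T(q) = \langle \pi_0\lambda, \pi_1 0\rangle T(\pi_1 q)$ (using $\pq q = \pi_1 q$ from the additive bundle axioms), and this unwinds, using naturality and the axioms $\lambda T(q) = q0$ and $0T(q) = q0$, to $\pi_1 q$. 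Actually the cleanest route is to note that the pullback square defining universality of the lift already tells us $\mu T(q) = (\p_0 q = \p_1 q)\, 0$; combining $\mu T(q) = \pi_1 q\, 0$ with the projection $p$ of ${\sf p}_E$ being $p_E$ is not quite what we want — so I will instead just track that $\mu$ lands in $T(E)$ over $M$ appropriately and identify the projection directly.

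For linearity, the key computation is to show $\mu \ell_E = \lambda_{q^*({\sf q})} T(\mu)$, i.e.\ that $\langle \pi_0\lambda, \pi_1 0\rangle T(\pq)\ell_E = \langle \pi_0\lambda, \pi_1 0\rangle T(\langle \pi_0\lambda, \pi_1 0\rangle T(\pq))$. I would expand the right-hand side using functoriality of $T$ and the fact that $T(\langle a, b\rangle) = \langle T(a), T(b)\rangle$ for maps into a pullback preserved by $T$, reducing it to $\langle \pi_0\lambda T(\lambda), \pi_1 0 T(0)\rangle T^2(\pq)$ (after using naturality to commute the two $T(\pq)$'s appropriately — here one needs $T(\pq)$'s interaction with the pairing, and that $E_2$ and its pullbacks are preserved by $T^n$). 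On the left-hand side, I apply the axiom $\lambda \ell_E = \lambda T(\lambda)$ on the first coordinate and $0\ell_E = 0 T(0)$ (naturality of $0$, or rather the coherence $0\ell = 0T(0)$) on the second, together with naturality of $\ell$ with respect to $\pq$, namely $\pq \ell = (\ell \times \ell)T(\pq)$ — this is precisely the statement that $(\lambda,\zq)$ (or rather the additive bundle morphism structure) behaves well, or more directly it is the naturality square for $\ell$ against the map $\pq$. Pushing $\ell_E$ through $T(\pq)$ via this naturality and then through the pairing via the coherences for $\ell$ should land both sides at the same normal form.

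The main obstacle I anticipate is bookkeeping the pullback structure: $\mu$ is a map out of $E_2$ built from a pairing followed by $T(\pq)$, and $T(\mu)$ therefore involves $T$ applied to a pairing into a pullback, so I must repeatedly invoke that finite wide pullbacks of $q$ and the mixed pullbacks with $T_m M$ are preserved by all $T^n$ (an axiom of differential bundles) to justify $T(\langle -,-\rangle) = \langle T(-), T(-)\rangle$ and to manipulate $T^2(\pq)$. The genuinely delicate point is getting the naturality of $\ell$ against $\pq$ into the right form and matching it with the coherence $\ell T(\lambda) = \lambda\ell$ used on the first coordinate; once those two facts are lined up, the rest is routine diagram-chasing. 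I would organize the proof as: (1) identify the projections and check the bundle morphism equation; (2) set up the linearity equation with both sides written in terms of pairings out of $E_2$; (3) reduce each coordinate separately using the differential bundle axioms for $\lambda$ and the naturality/coherence of $\ell$, $0$, and conclude equality.
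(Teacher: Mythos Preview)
Your linearity argument is essentially the paper's own: expand $T(\mu)$ through the pairing, use the differential-bundle coherence $\lambda T(\lambda) = \lambda\ell$ on the first coordinate and the tangent-category coherence $0T(0) = 0\ell$ on the second, then pull $\ell$ back through $T(\pq)$ by naturality. That part is fine.

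The bundle-morphism step, however, is muddled. You write that the condition to check is $\mu\, T(q) = \pi_1 q$ and claim ``$T(q)$ is the projection for ${\sf p}_E$'' --- but the projection of ${\sf p}_E = (p_E, +_E, 0_E, \ell_E)$ is $p_E : T(E) \to E$, not $T(q)$. With the base map $1_E$, the bundle-morphism condition $(\mu,1_E)$ must satisfy is simply
\[
\mu\, p_E \;=\; \pi_1 \cdot 1_E \;=\; \pi_1,
\]
and this is exactly the elementary identity $\mu p = \pi_1$ for differential bundles (the paper cites it from \cite{diffBundles}, Lemma~2.7). Your equation $\mu T(q) = \pi_1 q$ is not even well-typed: the left-hand side lands in $T(M)$ while the right-hand side lands in $M$. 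You seem to sense something is off (``not quite what we want'') but never state the correct projection or the correct equation. Once you do, this step is a one-liner and the rest of your argument goes through.
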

 
 \begin{proof}
 We must show this is a bundle morphism that is
 $$\xymatrix{E_2 \ar[d]_{\pi_1} \ar[rr]^{\mu} & & T(E) \ar[d]^{p} \\ E \ar@{=}[rr] & & E}$$ 
 commutes.  However, $\mu p = \pi_1$ is a basic identity of differential bundles (see \cite{diffBundles} Lemma 2.7).
 
 Thus, it remain to show that this is a linear bundle morphism (that is, $\lambda_{\sf f} T(\mu) = \mu \ell$) which we calculate out as follows:
 \begin{eqnarray*}
 \lambda_{\sf f} T(\mu) & = & \<\pi_0\lambda,\pi_10\>T(\<\pi_0\lambda,\pi_10\>T(\pq)) \\
 & = & \<\pi_0\lambda T(\lambda),\pi_10T(0)\>T^2(\pq) \\
 & = & \< \pi_0 \lambda\ell,\pi_10\ell\> T^2(\pq) \\
 & = & \< \pi_0 \lambda,\pi_10\> T(\pq)\ell \\
 & = & \mu \ell.
 \end{eqnarray*}
 \end{proof}

Now the object $E_2$ is a bundle in a rather different way because it is the apex of the pullback given by the universality of the lift:
$$\xymatrix{E_2 \ar[d]_{\pi_0q=\pi_1q} \ar[rr]^{\mu} & & T(E) \ar[d]^{T(q)} \\ M \ar[rr]_{0} && T(M) }$$
That is, it is also the object of the pullback bundle $0^*(T({\sf q}))$.  It is useful to recall the structure of this bundle.  Note that it is not immediate what the bundle addition is.  By definition it is inherited from the bundle $T({\sf q})$ pulling back the addition $T(\pq)$:  but what actually is this?  Fortunately, there is a simple description: one way to determine this structure is to look at the lift map of this bundle.  This is the unique map $\lambda_2$ in the diagram below:
$$\xymatrix{ & T(E_2) \ar[rr]^{T(\mu)} \ar[dd]^<<<<<<<{T(\pi_0q)}|\hole & & T^2(E) \ar[dd]^{T(q)} \\
                    E_2 \ar@{..>}[ur]^{\lambda_2} \ar[rr]^>>>>>>>{\mu} \ar[dd]_{\pi_0q} & & T(E) \ar[dd]^<<<<<<<{T(q)} \ar[ur]_{T(\lambda)c} \\
                    &  T(M) \ar[rr]_<<<<<<<{T(0)}|\hole & & T^2(M) \\
                    M \ar[rr]_{0} \ar[ur]^{0} & & T(M) \ar[ur]_{0}}$$
 Thus, $\lambda_2$ is the  unique map such that  $\lambda_2 T(\pi_0q) = \pi_0q0$ and $\lambda_2T(\mu) = \mu T(\lambda)c$.
 
 \begin{lemma} 
 The bundle $0^{*}(T({\sf q}))$ is just the product bundle of ${\sf q}$ with itself  in the fiber over $M$.  In particular, $\lambda_2 = \< \pi_0 \lambda,\pi_1 \lambda\>$.
 \end{lemma}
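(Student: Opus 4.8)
The plan is to compute the lift $\lambda_2$ directly from its universal property and recognise the answer. Recall that $\lambda_2$ is the unique map $E_2 \to T(E_2)$ with $\lambda_2 T(\pi_0 q) = \pi_0 q 0$ and $\lambda_2 T(\mu) = \mu T(\lambda) c$. Since $E_2 = E \x_M E$ and $T$ preserves this pullback, $T(E_2) = T(E)\x_{T(M)}T(E)$ with projections $T(\pi_0),T(\pi_1)$, so a map into $T(E_2)$ is determined by its composites with $T(\pi_0)$ and $T(\pi_1)$. I will show that $\<\pi_0\lambda,\pi_1\lambda\>$ satisfies the two defining equations for $\lambda_2$; since $\<\pi_0\lambda,\pi_1\lambda\>$ is precisely the lift of the Whitney sum ${\sf q}\x{\sf q}$ (see \cite{diffBundles}; its linear projections $\pi_0,\pi_1$ force the lift to satisfy $\lambda_{{\sf q}\x{\sf q}} T(\pi_i) = \pi_i\lambda$), and since the additive structure of a differential bundle is determined by its underlying projection and lift (cf.\ \cite{diffBundles}), this identifies $0^*(T({\sf q}))$ with ${\sf q}\x{\sf q}$.

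First I would note that $\<\pi_0\lambda,\pi_1\lambda\>$ is a well-defined map $E_2\to T(E_2)$: the required compatibility $\pi_0\lambda T(q) = \pi_1\lambda T(q)$ is immediate from the differential bundle identity $\lambda T(q) = q0$ together with $\pi_0 q = \pi_1 q$. The first defining equation is then immediate as well:
	\[ \<\pi_0\lambda,\pi_1\lambda\> T(\pi_0 q) = \<\pi_0\lambda,\pi_1\lambda\> T(\pi_0)T(q) = \pi_0\lambda T(q) = \pi_0 q 0. \]

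The content lies in the second equation $\<\pi_0\lambda,\pi_1\lambda\> T(\mu) = \mu T(\lambda) c$. I would expand the left-hand side using $\mu = \<\pi_0\lambda,\pi_1 0\> T(\pq)$, and the right-hand side using in addition the additivity axiom $\pq\lambda = \<\pi_0\lambda,\pi_1\lambda\> T(\pq)$ (one of the additive-bundle-morphism conditions in the definition of a differential bundle). Distributing $T$ through the composites and reading off the two components in the pullback $T^2(E_2) = T^2(E)\x_{T^2(M)}T^2(E)$ (using naturality of $c$ to move $c$ past $T^2(\pq)$, where it then acts coordinatewise), both sides take the form $\<a,b\>T^2(\pq)$, and comparing the two components leaves exactly
	\[ \lambda T(\lambda) = \lambda T(\lambda) c \qquad\text{and}\qquad \lambda T(0) = 0 T(\lambda) c. \]
These are precisely the linearity statements for $(\lambda,0):{\sf q}\to T({\sf q})$ and for $(0_E,0_M):{\sf q}\to T({\sf q})$, established in Proposition \ref{examplesLinearMorphisms}(v) and (iii) (and each reduces in one line to $\lambda\ell = \lambda T(\lambda)$ with $\ell c = \ell$, resp.\ to naturality of $0$ with the coherence $0c = T(0)$).

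The only real obstacle is bookkeeping. One must keep the two pullback presentations of $E_2$ straight — as $E\x_M E$ with projections $\pi_0,\pi_1$, and as $0_M^*(T(E))$ with legs $\mu$ and $\pi_0 q$ — and carefully track how $T$, $c$ and $0$ distribute over the iterated pullbacks $T(E_2)$ and $T^2(E_2)$. No idea beyond the differential bundle axioms and the standard tangent category coherences is needed; once $\lambda_2 = \<\pi_0\lambda,\pi_1\lambda\>$ is in hand, the statement about the additive structure follows at once.
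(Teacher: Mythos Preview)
Your proposal is correct and follows essentially the same approach as the paper: both verify the two defining equations for $\lambda_2$, with the substantive one being $\<\pi_0\lambda,\pi_1\lambda\>T(\mu) = \mu T(\lambda)c$, which in either organization reduces to the identities $\lambda T(\lambda) = \lambda T(\lambda)c$ and $\lambda T(0) = 0T(\lambda)c$ via the additivity of $\lambda$ and the tangent-category coherences. The only cosmetic difference is that the paper transforms the left-hand side step by step into the right-hand side, whereas you expand both sides and match components; the underlying identities invoked are identical.
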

 
 \begin{proof} 
 It suffices to show that $\lambda_2 = \< \pi_0 \lambda,\pi_1 \lambda\>$ works in the equation above as then the fact that it is the product 
 bundle is immediate.  We have:
 \begin{eqnarray*}
 \< \pi_0 \lambda,\pi_1 \lambda\> T(\pi_0q) & = & \pi_0 \lambda T(q) = \pi_0 q 0 = \pi_1 q 0 \\
 \< \pi_0 \lambda,\pi_1 \lambda\>T(\mu) & = & \< \pi_0 \lambda,\pi_1 \lambda\>T(\< \pi_0 \lambda,\pi_10 \> T(\pq)) \\
       & = & \< \pi_0 \lambda T(\lambda), \pi_1\lambda T(0) \> T^2(\pq) \\
       & = & \< \pi_0 \lambda\ell, \pi_1\lambda T(0) \> T^2(\pq) \\
        & =  &  \< \pi_0 \lambda \ell c ,\pi_1 \lambda 0 c\> T^2(\pq) \\
        & =  &  \< \pi_0 \lambda T(\lambda)c ,\pi_1 0 T(\lambda) c\> T^2(\pq) \\
        & =  &  \< \pi_0 \lambda T(\lambda) ,\pi_1 0 T(\lambda)\> T^2(\pq)c \\
        & =  &  \< \pi_0 \lambda,\pi_1 0\> T(\lambda_2 T(\pq))c \\
        & =  &  \< \pi_0 \lambda,\pi_1 0\> T(\pq \lambda)c \\
        & =  &  \mu T(\lambda)c.
 \end{eqnarray*}
 \end{proof}

We are now ready to give an alternative presentation of a vertical connection which we shall refer to as a \textbf{Finsler connection} on a differential bundle.  

\begin{definition}
A {\bf Finsler connection} is given by a map $R: T(E) \to E_2$ such that 
\begin{enumerate}[{\bf [c.1]}]
\item $R$ is a retraction of $\mu$.  
\item $(R,p): T({\sf q}) \to 0^*(T({\sf q}))$ is a linear bundle morphism. 
\item $(R,1): {\sf p}_E \to q^*({\sf q})$  is a linear bundle morphism.
\end{enumerate}
\end{definition}

Again we may unwrap this definition into equational requirements.  Requiring that these are bundle morphisms means that $R \pi_0q = T(q)p$ and 
that $R \pi_1 = p$.  Notice that the second implies the first as 
$$R \pi_0q = R \pi_1q = pq = T(q)p.$$  
Requiring that these are retracts amounts to requiring only that $\mu R = 1$.   Requiring that the maps be linear require respectively that 
$T(\lambda)cT(R) = R \lambda_2$ and that $\ell T(R) = R \lambda_{\sf f}$.  We may express these by looking at components.  Examining the first 
equation yields:
\begin{eqnarray*}
T(\lambda)cT(R) & = & T(\lambda)c \< T(R\pi_0),T(R\pi_1) \> = \< T(\lambda) cT(R\pi_0),T(\lambda)cT(p) \> \\
& = & \< T(\lambda) cT(R\pi_0),T(\lambda)p \> =  \< T(\lambda) cT(R\pi_0),p \lambda \> \\
R \lambda_2 & = & \< R\pi_0,R\pi_1\>\< \pi_0\lambda,\pi_1\lambda\> = \< R\pi_0\lambda,p\lambda\> \\
\end{eqnarray*}
So that this amounts to the condition $R\pi_0\lambda =T(\lambda) cT(R\pi_0)$.   Examining the second gives:
\begin{eqnarray*}
\ell T(R) & = & \ell\< T(R\pi_0),T(R\pi_1)\> = \< \ell T(R\pi_0),\ell T(p)\> = \< \ell T(R\pi_0),p0 \> \\
R q^*(\lambda) & = & \< R\pi_0,R \pi_1\>\<\pi_0\lambda,\pi_00\> = \< R\pi_0\lambda, p0 \>
\end{eqnarray*}
So that this amounts to the requirement that $\ell T(R\pi_0) = R\pi_0\lambda$.  

Finally that these are retractions amounts to the requirement that $\mu R = 1_{E_2}$.  

We collect these observations into a list of equations a Finsler connection must satisfy:

\begin{lemma}\label{lemmaFinslerProps}
$R: T(E) \to E_2$ is a Finsler connection for the bundle ${\sf q}$ if and only if the following equations are satisfied:
\begin{enumerate}[(a)]
\item  $\mu R = 1_{E_2}$;
\item  $R \pi_1 = p$;
\item  $R\pi_0\lambda =T(\lambda) cT(R\pi_0)$;
\item  $\ell T(R\pi_0) = R\pi_0\lambda$.
\end{enumerate}
\end{lemma}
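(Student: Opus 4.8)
The plan is to prove this by simply unwinding the definition of a Finsler connection condition by condition, using the explicit formulas for the relevant lifts that were established just before the lemma: that the lift of $q^*({\sf q})$ is $\lambda_{\sf f} = \<\pi_0\lambda,\pi_1 0\>$, and that the lift of $0^*(T({\sf q}))$ is $\lambda_2 = \<\pi_0\lambda,\pi_1\lambda\>$. Since $E_2$, viewed either as $q^*({\sf q})$ or as $0^*(T({\sf q}))$, is a limit, a map into it is determined by its composites with the projections $\pi_0,\pi_1$, and the strategy throughout is to decompose each equation into its $\pi_0$- and $\pi_1$-components, observe that the $\pi_1$-component is automatic once $R\pi_1 = p$ is known, and read off the $\pi_0$-component as the stated equation. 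In fact, the paragraphs preceding the lemma already carry out this bookkeeping, so I would organize that material cleanly rather than redo it.

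Concretely: condition [c.1] is literally equation (a), $\mu R = 1_{E_2}$. For [c.3], that $(R,1_E)\colon {\sf p}_E \to q^*({\sf q})$ is a bundle morphism amounts to $R\pi_1 = p$, which is (b) — and I would note in passing that this already supplies the bundle-morphism half of [c.2], since $R\pi_0 q = R\pi_1 q = pq = T(q)p$. The linearity half of [c.3] is $\ell T(R) = R\lambda_{\sf f}$; comparing $\pi_0$- and $\pi_1$-components and using $R\pi_1 = p$ together with naturality of $\ell,0,p$, the $\pi_1$-component collapses to an identity and the $\pi_0$-component is exactly (d), $\ell T(R\pi_0) = R\pi_0\lambda$. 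Similarly, the linearity half of [c.2] is $T(\lambda)cT(R) = R\lambda_2$; using $\lambda_2 = \<\pi_0\lambda,\pi_1\lambda\>$ and $T(\lambda)cT(p) = T(\lambda)p = p\lambda$, the $\pi_1$-component is again automatic given $R\pi_1 = p$, and the $\pi_0$-component is precisely (c), $R\pi_0\lambda = T(\lambda)cT(R\pi_0)$. Running these equivalences in reverse shows that (a)--(d) together reconstitute [c.1], [c.2], [c.3].

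The only step requiring genuine attention is the component reduction of the two linearity equations: one must verify that the $\pi_1$-components really are forced, which uses the coherence and naturality equations for $c$, $\ell$, $0$, and $p$ in a tangent category, and one must keep straight which bundle structure (hence which lift, $\lambda_{\sf f}$ versus $\lambda_2$) is in play on each side. None of this is deep, but it is the part where a careless sign or a swapped projection would break the argument, so I would present those two component calculations explicitly and treat the rest as immediate.
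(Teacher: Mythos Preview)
Your proposal is correct and follows essentially the same approach as the paper: the paragraphs immediately preceding the lemma carry out exactly this component-by-component unwinding, reducing the bundle-morphism conditions to $R\pi_1 = p$ (which then forces $R\pi_0 q = T(q)p$) and reducing each linearity equation to its $\pi_0$-component via the explicit lifts $\lambda_{\sf f}$ and $\lambda_2$. You have identified the same key steps and the same places where care is needed.
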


The objective of breaking down these conditions is to help establish:

\begin{theorem}\label{thmFinslerEquivalence}
To have a Finsler connection is precisely to have a vertical connection.  That is, if $R$ is a Finsler connection then $R \pi_0$  is a vertical connection and, conversely, if $K$ is a vertical connection then $\< K,p\>$ is a Finsler connection and these translations are inverse.
\end{theorem}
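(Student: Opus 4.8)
The plan is to verify the correspondence by checking the two translations land in the right place, are mutually inverse, and that the defining equations of each structure follow from the other via the unwrapped equational characterizations in Lemma \ref{vertDescConditions} and Lemma \ref{lemmaFinslerProps}. The key bookkeeping tool is the universal property of the lift ($E_2$ as the equalizer of $T(q)$ and $pq0$): since $\mu\langle K,p\rangle = 1$ is needed, and more generally $\mu R = 1$ for a Finsler connection, the maps $R\pi_0$ and $R\pi_1$ are tightly constrained — in fact $R\pi_1 = p$ is forced by [c.2], so $R$ is genuinely determined by $R\pi_0$ together with $p$, which is exactly what makes the correspondence $R \mapsto R\pi_0$, $K \mapsto \langle K,p\rangle$ plausible.

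First I would show that if $R$ is a Finsler connection then $K := R\pi_0$ is a vertical connection, by checking the four equations (a)--(d) of Lemma \ref{vertDescConditions}. For (a), $\lambda K = \lambda R\pi_0$: here I would use that $\lambda = \langle \mathrm{id}_?, \ldots\rangle$-type factoring, or more directly note $\lambda\mu = \langle\lambda\pi_0\lambda?,\ldots\rangle$ — actually the clean route is that $\lambda$ composed with $\mu$ relates to the lift structure, so I expect to use an identity like $\lambda_{\sf q} \cdot (\text{something}) $ together with $\mu R = 1$ restricted to the first component; I would reduce $\lambda R \pi_0$ using $R\pi_1 = p$ and the explicit form $\mu = \langle\pi_0\lambda,\pi_10\rangle T(\pq)$ and Lemma \ref{lemmaVConnector}-style manipulations. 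For (b) $Kq = pq$: this is immediate from $R\pi_0 q = T(q)p = pq$ (the bundle-morphism conditions, already observed right after the definition of Finsler connection). For (c) $K\lambda = \ell T(K)$: this is exactly Lemma \ref{lemmaFinslerProps}(d), $\ell T(R\pi_0) = R\pi_0\lambda$, verbatim. For (d) $K\lambda = T(\lambda)cT(K)$: this is exactly Lemma \ref{lemmaFinslerProps}(c). So three of the four are essentially free, and the only real content is deriving the retract equation $\lambda K = 1$ from $\mu R = 1$.

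Next I would show the converse: if $K$ is a vertical connection then $R := \langle K,p\rangle$ is a Finsler connection, checking (a)--(d) of Lemma \ref{lemmaFinslerProps}. Equation (b) $R\pi_1 = p$ is trivial. Equation (a) $\mu R = 1_{E_2}$ is precisely the content of Lemma \ref{lemmaKUniversality}(i): $\mu\langle K,p\rangle = 1$, so $1_{E_2} = \mu R$ follows (I should double-check $E_2$ is the right domain/codomain, which it is since $\langle K,p\rangle : T(E) \to E_2$ lands in the pullback because $Kq = pq = T(q)p$ — wait, one must check $K\pi_1? $; rather, one needs $Kq = pq$, which is (b) of the vertical connection, and the pullback defining $E_2$ is along $q$ with itself, with projections $\pi_0,\pi_1$ both composing with $q$; since we map via $\langle K,p\rangle$ we need $K$ and $p$ to agree after $q$: $Kq = pq$, precisely the condition). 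Then $R\pi_0\lambda = K\lambda$ and $T(\lambda)cT(R\pi_0) = T(\lambda)cT(K)$, so (c) becomes $K\lambda = T(\lambda)cT(K)$, which is Lemma \ref{vertDescConditions}(d); and (d) becomes $\ell T(K) = K\lambda$, which is Lemma \ref{vertDescConditions}(c). Finally, mutual inverseness: $\langle K,p\rangle\pi_0 = K$ is trivial, and for $R\mapsto R\pi_0\mapsto\langle R\pi_0,p\rangle$ I need $\langle R\pi_0, p\rangle = R$, which holds iff $R\pi_0 = R\pi_0$ and $R\pi_1 = p$ — the latter is Lemma \ref{lemmaFinslerProps}(b).

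The main obstacle, and really the only nontrivial calculation, is the equivalence between the two retract conditions: $\lambda K = 1_E$ versus $\mu R = 1_{E_2}$. In the direction $K$ vertical $\Rightarrow$ $R = \langle K,p\rangle$, this is handled cleanly by Lemma \ref{lemmaKUniversality}(i). In the reverse direction, from $\mu R = 1_{E_2}$ I must extract $\lambda(R\pi_0) = 1_E$; I expect to post-compose $\mu R = 1$ with $\pi_0$ and $\pi_1$, use the explicit formula for $\mu$, the fact that $R\pi_1 = p$, the additive-bundle identities ($\pq$ with $\zq$), and possibly the coherence identity $\lambda\ell = \lambda T(\lambda)$; the computation should mirror the proof of Lemma \ref{lemmaVConnector} run in reverse. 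I would allocate most of the write-up space to this one computation and dispatch everything else by citing the two unwrapping lemmas.
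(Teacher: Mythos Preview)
Your proposal is correct and follows essentially the same route as the paper. The only notable stylistic difference is in verifying the linearity conditions for $K := R\pi_0$: you read them off directly by observing that Lemma~\ref{lemmaFinslerProps}(c),(d) are literally Lemma~\ref{vertDescConditions}(d),(c) with $K = R\pi_0$, whereas the paper obtains them by exhibiting $(K,p)$ and $(K,q)$ as composites of linear bundle morphisms $(R,p)(\pi_0,1_M)$ and $(R,1)(\pi_0,q)$; your direct equational matching is arguably cleaner. You have also correctly isolated the one nontrivial step, deriving $\lambda R\pi_0 = 1_E$ from $\mu R = 1_{E_2}$, and your plan for it (additive-bundle identities plus the explicit form of $\mu$) is exactly what the paper does: write $\lambda = \langle 1, q\zq\rangle\mu$ using $\lambda T(q\zq) = q\zq 0$ and the unit law for $T(\pq)$, then apply $\mu R = 1$ and project.
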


\begin{proof}  As $R\pi_1 =p$ it is clear the translations are inverse.

Starting with a Finsler connection $R$, we define $K := R\pi_0$ and must show it is a vertical connection.  First we require that 
$\lambda R \pi_0 = 1_E$  which follows as:
\begin{eqnarray*}
\lambda R \pi_0 & = & \<\lambda,\lambda T(q \zq) \> T(\pq) R  \pi_0 \\
                          & = & \< \lambda, \lambda T(\lambda p) \> T(\pq) R \pi_0 \\
                          & = & \< \lambda, \lambda \ell T(p) \> T(\pq) R \pi_0 \\
                          & = & \< \lambda, \lambda p 0 \> T(\pq) R \pi_0 \\
                          & = & \< \lambda, q \zq 0 \> T(\pq) R \pi_0 \\
                          & = & \< 1,q \zq\> \< \pi_0 \lambda, \pi_1 0 \> T(\pq) R \pi_0 \\
                          & = & \< 1,q \zq\>\mu R \pi_0 \\
                         & = & \< 1,q \zq\> \pi_0 = 1
\end{eqnarray*}
Because $R \pi_0 q = R \pi_1 q = p q = T(q) p$ both the bundle morphisms of {\bf [C.1]} and {\bf [C.2]} are present.  We must show that they are linear. 
In fact, {\bf [C.1]} is given immediately by the composite of linear bundle morphisms
$$T({\sf q}) \to^{(R,p)} 0^{*}(T({\sf q})) \to^{(\pi_0,1_M)} {\sf q}$$
and {\bf [C.2]} by the composite 
$${\sf p} \to^{(R,1)} q^{*}({\sf q}) \to^{(\pi_0,q)} {\sf q}.$$

For the converse we set $R := \< K,p\>: T(E) \to E_2$: this is well-defined as $K q = p q$ (from {\bf [C.2]}).  We need to show that  $\< K,p\>$ is a retract of $\mu$.  Indeed, using lemma \ref{lemmaVConnector} and the definition of $\mu$, we have
	\[ \mu\<K,p\> = \<\mu K, \mu p\> = \<\pi_0,\pi_1\> = 1. \]
The fact that the bundle morphisms are present for {\bf [c.2]} and {\bf [c.3]} is immediate: the difficulty is to show that they are linear.  However,  inspecting the requirements given in the lemma above shows that when $R = \< K,p\>$ they reduce to the requirements for a vertical connection.  This completes the proof.
\end{proof}

Proposition \ref{diffObjectVD} showed that in a Cartesian tangent category, every differential object $A$  has a vertical connection $\hat{p}: T(A) \to A$.  A  differential object is equivalently a differential bundle over the final object and this means that $\mu: A \times A \to T(A)$ is an isomorphism.  The results of this section show that a vertical connection is given by a retract of this map, so it follows that there can be at most one vertical connection for a differential object, namely $\mu^{-1}$.   Thus we have:

\begin{corollary}\label{diffObjectVDUnique}
In a Cartesian tangent category each differential object $A$ has a unique vertical connection given by its principal projection, $\hat{p}: T(A) \to A$.
\end{corollary}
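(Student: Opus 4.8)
The statement to prove is Corollary \ref{diffObjectVDUnique}: in a Cartesian tangent category, a differential object $A$ has a unique vertical connection, namely its principal projection $\hat{p}$. The plan is to assemble three facts that have already been established in the excerpt. First, Proposition \ref{diffObjectVD} tells us that $\hat{p}$ \emph{is} a vertical connection on the differential bundle $A$ (viewed as a bundle over the terminal object $1$), so existence is already in hand and only uniqueness remains. Second, a differential object is, by definition, a differential bundle over $1$, and in that situation the universality-of-the-lift pullback square collapses: since $q = {}!: A \to 1$ and $0: 1 \to T(1) = 1$, the square defining $E_2 = A_2 \cong A \times A$ exhibits $\mu: A \times A \to T(A)$ as an isomorphism (its comparison with a pullback of two isomorphisms, or equivalently the fact that $T(1) \cong 1$ forces the pullback to be $A \times A$ with $\mu$ the induced map into $T(A)$, which is then monic and split epi, hence iso). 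Third, Theorem \ref{thmFinslerEquivalence} shows that vertical connections on $\mathsf{q}$ correspond bijectively to Finsler connections, i.e.\ to retractions $R: T(E) \to E_2$ of $\mu$ satisfying the further linearity axioms.

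Putting these together: a vertical connection $K$ on $A$ corresponds to a Finsler connection $R = \langle K, p\rangle$, which in particular is a retraction of $\mu$, i.e.\ $\mu R = 1_{A \times A}$. But $\mu$ is an isomorphism, so its only retraction is $\mu^{-1}$; hence $R$ is forced to be $\mu^{-1}$, and therefore $K = R\pi_0 = \mu^{-1}\pi_0$ is uniquely determined. Since Proposition \ref{diffObjectVD} already exhibits $\hat{p}$ as a vertical connection, we conclude $\hat{p} = \mu^{-1}\pi_0$ and that it is the unique vertical connection on $A$.

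The steps, in order, are: (1) recall that a differential object is a differential bundle over $1$ and observe that for such a bundle the universality-of-the-lift square forces $\mu: A \times A \to T(A)$ to be an isomorphism (this uses $T(1) \cong 1$ and the fact that $\mu$ is by construction the map into $T(A)$ induced by the pullback data, together with the fact that it is automatically monic as the comparison map and split as a retraction exists once we have any vertical connection — or more directly, the pullback of $0: 1 \to 1$ along $T(q): T(A) \to 1$ is just $T(A)$ itself, so $E_2 \to T(A)$ is forced to be invertible); (2) invoke Theorem \ref{thmFinslerEquivalence} to replace "vertical connection $K$" by "Finsler connection $R = \langle K,p\rangle$", which in particular is a retraction of $\mu$; (3) observe a retraction of an isomorphism is its inverse, so $R = \mu^{-1}$ and hence $K = \mu^{-1}\pi_0$ is unique; (4) combine with Proposition \ref{diffObjectVD} to identify this unique connection as $\hat{p}$.

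The only genuinely delicate point — and the one I would write out carefully — is step (1): verifying that $\mu$ is an isomorphism for a differential object. One should check that the universality-of-the-lift diagram, with $M = 1$, literally says that $A \times A$ (with projection $\pi_0 q = {}!$ to $1$) together with $\mu$ forms a pullback of $T(q)$ along $0 = 1_1$; since pulling back along an identity just returns $T(A)$, the comparison isomorphism $A\times A \xrightarrow{\cong} T(A)$ is exactly $\mu$. Everything after that is formal. (Alternatively one can sidestep even this by noting that $\mu$ is split monic via any vertical connection's Finsler form and separately split epic, but the pullback argument is cleanest.)
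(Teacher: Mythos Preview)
Your proposal is correct and follows essentially the same route as the paper: existence from Proposition \ref{diffObjectVD}, the observation that $\mu$ is an isomorphism for a differential object (which the paper asserts in one clause and you helpfully unpack via the universality-of-the-lift pullback over $M=1$), and then Theorem \ref{thmFinslerEquivalence} to conclude that the only possible Finsler connection is $\mu^{-1}$, forcing uniqueness. The paper's argument is terser but structurally identical.
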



\section{Horizontal connections}\label{secHorizConn}


One of the other standard ways of describing a connection on a bundle $\sf q$, originally due to Erhesmann, is in terms of a certain ``lifting'' of a tangent vector on $M$ and an element of $E$ to a tangent vector of $E$.  For example, this was the definition first proposed for connections in the context of Synthetic Differential Geometry (see definition 1.1 of \cite{kockConnections}).    To understand this idea in the context of tangent categories, we must first describe the bundles of such objects and their properties.  


\subsection{The horizontal descent}

For any differential bundle $\sf q$, we have the following pullback:
\[
\bfig
	\square<600,350>[T(M) \times_M E`T(M)`E`M;\pi_0`\pi_1`p_M`q]
\efig
\]
This comes with a canonical map
	\[ U := \<Tq,p\>: T(E) \to T(M) \times_M E \]
which we refer to as the \textbf{horizontal descent}.  

Now $T(E)$ can be viewed as a differential bundle in two different ways: as a bundle over $E$ (ie., the differential bundle $\sf p_E$) and as a bundle over $T(M)$ (ie., the differential bundle $T({\sf q})$).  Similarly, the pullback $T(M) \times_M E$ can be viewed as a differential bundle in two different ways: one as the pullback of $\sf p_M$ along $q$, ie., $q^*({\sf p_M})$, and the other as the pullback of $\sf q$ along $p$, ie., $p^*({\sf q})$.  $U$ is linear for each of these structures respectively.  

\begin{lemma}\label{UisLinear}
For a differential bundle $\sf q$, with the horizontal descent $U$ as defined above,
\begin{itemize}
	\item $(U,1_E)$ is a linear bundle morphism from $\sf p_E$ to $q^*({\sf p_M})$.
	\item $(U,1_{T(M)})$ is a linear bundle morphism from $T({\sf q})$ to $p^*({\sf q})$.
\end{itemize}
\end{lemma}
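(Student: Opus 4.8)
The plan is to verify each of the two claims by checking, in turn, the bundle-morphism condition (compatibility with the projections) and the linearity condition (compatibility with the lifts), using only naturality of the tangent natural transformations and the defining property $U = \langle T(q), p\rangle$. Since the codomains are pullbacks, I will split each linearity equation into its two components by post-composing with the two projections; in each case one component collapses immediately by a projection identity and the other reduces to a naturality square.

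For the first item, $(U, 1_E): {\sf p}_E \to q^*({\sf p_M})$: the bundle map $U = \langle T(q), p\rangle$ sends $T(E)$ to $T(M) \times_M E$, and the relevant square commutes because $U \pi_1 = p = 1_E \cdot p$, i.e.\ the projection of $q^*({\sf p_M})$ pulls back to $p_E$. For linearity I must show $U \cdot q^*(\lambda_{{\sf p_M}}) = \ell_E \cdot T(U)$, where the lift of $q^*({\sf p_M})$ is the pullback of $\ell_M$. Post-composing with $\pi_1$ on the $E$-side kills both lifts down to identities on the base (using $\ell p = p0$ type coherence and $U\pi_1 = p$), while post-composing with the $T^2(M)$-projection reduces the equation to $T(q)\ell_M = \ell_E T^2(q)$, which is precisely naturality of $\ell$. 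So the first item is a short computation once the pullback projections of $q^*({\sf p_M})$ are spelled out (which Example \ref{basicdiffbundles}(v) provides).

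For the second item, $(U, 1_{T(M)}): T({\sf q}) \to p^*({\sf q})$: here $U$ is regarded as a bundle map over $T(M)$, and the bundle-map square commutes because $U \pi_0 = T(q)$, matching the projection of $p^*({\sf q})$. For linearity I must show $U \cdot p^*(\lambda) = (T(\lambda)c) \cdot T(U)$, where $T(\lambda)c$ is the lift of $T({\sf q})$ and the lift of $p^*({\sf q})$ is the pullback of $\lambda$ along the appropriate square. Again I split by the two projections of $p^*({\sf q})$: one component trivializes on the base, and the other becomes $T(q)\cdot(\text{something}) = T(\lambda)c\cdot T(T(q))$, which unwinds via naturality of $c$ ($c T^2(q) = T^2(q) c$) and the identity $U\pi_1 = p$ together with naturality of $p$.

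The main obstacle I anticipate is purely bookkeeping: correctly identifying the lift maps $\lambda_{q^*({\sf p_M})}$ and $\lambda_{p^*({\sf q})}$ of the two pullback differential bundles and their projections, since the definition of $f^*({\sf q})$ in Example \ref{basicdiffbundles}(v) presents the lift implicitly via a universal property rather than as an explicit formula. Once those are pinned down (and one checks the requisite pullbacks exist, which is guaranteed by the extra pullback hypotheses built into the definition of a differential bundle), each linearity equation factors through the pullback and reduces — component by component — to naturality of $\ell$, naturality of $c$, and naturality of $p$, with no genuinely new content. I therefore expect the write-up to be a routine diagram chase in each of the two cases.
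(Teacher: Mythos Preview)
Your proposal is correct and follows essentially the same route as the paper. The paper's proof computes $\ell_E T(U)$ and $T(\lambda)cT(U)$ directly as pairs into the pullback $T(T(M)\times_M E)$, obtaining $\langle T(q)\ell_M, p0\rangle$ and $\langle T(q)T(0)c, p\lambda\rangle$ respectively, which are exactly $U$ followed by the lifts $\ell_M \times 0$ and $0 \times \lambda$ of the two pullback bundles; your plan to post-compose with the two projections is simply the componentwise version of the same computation, and the ingredients you name (naturality of $\ell$, $c$, and $p$, plus the defining equations $U\pi_0 = T(q)$, $U\pi_1 = p$) are precisely those used.
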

\begin{proof}
The first is a bundle morphism by definition of $U$, since $U\pi_1 = \<T(q),p\>\pi_1 = p$.  For linearity,
	\[ \ell_ET(U) = \ell_E\<T^2(q),T(p)\> = \<\ell_E T^2(q),\ell_E T(p)\> = \<T(q)\ell_M,p0\> \] \
by naturality and coherence of $\ell$.

The second is also a bundle morphism by definition of $U$, and is linear since
\begin{eqnarray*}
T(\lambda)cT(U) & = & T(\lambda)c\<T^2(q),T(p)\> \\
& = & \<T(\lambda)T^2(q)c,T(\lambda)cT(p)\> \mbox{ (using naturality of $c$)} \\
& = & \<T(\lambda T(q))c,T(\lambda)p\> \\
& = & \<T(q0))c,p\lambda\> \\
& = & \<T(q)T(0)c,p\lambda\> \\
\end{eqnarray*}
as required.  
\end{proof}

The following will be useful when calculating with $U$.  

\begin{lemma}\label{lemmaBundleMorphismsU}
For a bundle morphism $(f_1,f_0): {\sf q} \to {\sf q'}$, $T(f_1)U' = U(T(f_0) \times f_1)$.
\end{lemma}
\begin{proof}
We have
	\begin{eqnarray*} T(f_1)U' & = & T(f_1)\<T(q'),p\> = \<T(f_1q'),T(f)p\> = \<T(qf_0),pf_0\> \\
	& = & \<T(q),p\>(T(f_0) \times f_1) = U'(T(f_0) \times f_1). \end{eqnarray*}
\end{proof}


\subsection{Definition and examples}

We can now give the ``lifting'' version of a connection: it is a section of the horizontal descent, $U$, which is linear in both ways:  

\begin{definition}\label{defnHorizConnection}
Let $\sf q$ be a differential bundle.  A \textbf{horizontal lift} is a map $H: T(M) \times_M E \to T(E)$ which is a section of $U = \<T(q),p\>$.  A \textbf{horizontal connection} on $\sf q$ is a horizontal lift $H$ such that 
\begin{itemize}
	\item $(H,1_E)$ is a linear bundle morphism from $q^*({\sf p_M})$ to $\sf p_E$;
	\item $(H,1_{T(M)})$ is a linear bundle morphism from $p^*({\sf q})$ to $T({\sf q})$.
\end{itemize}
\end{definition}


As with vertical connections, it is helpful to unwrap the definition into specific equational requirements.  

\begin{lemma}\label{horizLiftConditions}
To give a horizontal connection on a differential bundle $\sf q$ is to give a map 
	\[ H: T(M) \times_{M} E \to T(E) \] such that:
\begin{enumerate}[(i)]
	\item $HT(q) = \pi_0$.
	\item $Hp = \pi_1$.
	\item $H \ell = (\ell \times 0)T(H)$.
	\item $HT(\lambda)c = (0 \times \lambda)T(H)$.
\end{enumerate}
\end{lemma}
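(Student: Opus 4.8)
The plan is to unwrap Definition \ref{defnHorizConnection} clause by clause, exactly as was done for vertical connections in Lemma \ref{vertDescConditions}; the only extra ingredient needed is an explicit description of the lifts of the two pullback bundles $q^*({\sf p}_M)$ and $p^*({\sf q})$.

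First I would dispatch the section requirement and the bundle-morphism requirements, which turn out to give only (i) and (ii). Since $U = \langle T(q),p\rangle$ and its codomain $T(M)\times_M E$ is a pullback with projections $\pi_0,\pi_1$, the condition $HU = 1$ is equivalent to $HT(q) = \pi_0$ together with $Hp = \pi_1$, i.e.\ (i) and (ii). Moreover $q^*({\sf p}_M)$ has base projection $\pi_1 : T(M)\times_M E \to E$, so $(H,1_E) : q^*({\sf p}_M) \to {\sf p}_E$ being a bundle morphism is just $Hp = \pi_1$, i.e.\ (ii) again; and $p^*({\sf q})$ has base projection $\pi_0 : T(M)\times_M E \to T(M)$, so $(H,1_{T(M)}) : p^*({\sf q}) \to T({\sf q})$ being a bundle morphism is just $HT(q) = \pi_0$, i.e.\ (i) again. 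So nothing new comes from those.

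The substantive step is linearity, and for it I first identify the lifts of the two pullback bundles using the recipe of Example \ref{basicdiffbundles}(v). The lift of $q^*({\sf p}_M)$ (the pullback of ${\sf p}_M$ along $q$) is the unique map into $T(T(M)\times_M E) \cong T^2(M)\times_{T(M)} T(E)$ whose composites with the projections $T(\pi_0),T(\pi_1)$ are $\pi_0\ell_M$ and $\pi_1 0_E$, hence it is $\langle \pi_0\ell_M,\pi_1 0_E\rangle = \ell_M \times 0_E$ (one checks this composite lands in the pullback using the tangent axiom $\ell_M T(p_M) = p_M 0_M$ and naturality of $0$). Symmetrically, the lift of $p^*({\sf q})$ (the pullback of ${\sf q}$ along $p_M$) works out to $\langle \pi_0 0_{T(M)},\pi_1\lambda\rangle = 0_{T(M)} \times \lambda$, using the differential-bundle axiom $\lambda T(q) = q 0_M$. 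Since the lift of ${\sf p}_E$ is $\ell_E$ and the lift of $T({\sf q})$ is $T(\lambda)c$ (Example \ref{basicdiffbundles}(iii),(iv)), linearity of $(H,1_E)$ becomes $H\ell_E = (\ell_M \times 0_E)T(H)$, which is (iii), and linearity of $(H,1_{T(M)})$ becomes $HT(\lambda)c = (0_{T(M)} \times \lambda)T(H)$, which is (iv). Putting the pieces together shows that $H$ is a horizontal connection precisely when (i)--(iv) hold.

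The only real obstacle is bookkeeping: in each pullback bundle one must keep straight which projection ($\pi_0$ or $\pi_1$) plays the role of ``$f^*(q)$'' and which plays the role of ``$f^*_E$'' in the formula of Example \ref{basicdiffbundles}(v), and hence which coordinate the lift acts on by $\lambda$-type versus $0$-type maps. There is no conceptual difficulty; as a sanity check, the identification of the lift of $q^*({\sf p}_M)$ is confirmed by comparing with Lemma \ref{UisLinear}, whose computation $\ell_E T(U) = \langle T(q)\ell_M, p 0\rangle$ is exactly $U(\ell_M \times 0_E)$.
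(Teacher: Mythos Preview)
Your proposal is correct and follows essentially the same approach as the paper's own proof: the paper simply states that (i) and (ii) are the section requirement (which also cover the bundle-morphism conditions) and that (iii) and (iv) are the linearity requirements, leaving the identification of the pullback-bundle lifts implicit. You have filled in exactly that implicit step, correctly computing the lifts of $q^*({\sf p}_M)$ and $p^*({\sf q})$ from Example \ref{basicdiffbundles}(v) as $\ell_M \times 0_E$ and $0_{T(M)} \times \lambda$, and your cross-check against Lemma \ref{UisLinear} is apt.
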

\begin{proof}
The first two conditions are the requirements that $H$ be a section of $U = \<T(q),p\>$.  These are also the exact conditions required for $(H,1)$ to be a bundle morphism between the bundles described in the definition.  The final two conditions are simply the linearity requirements for these bundles.
\end{proof}

\begin{proposition}\label{diffObjectHL}
Any differential object $A$ (viewed as a differential bundle over $1$) has a canonical horizontal connection $\pi_1 0: 1 \times A \to T(A)$.
\end{proposition}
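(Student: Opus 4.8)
The plan is to verify the four equational conditions from Lemma \ref{horizLiftConditions} for the candidate map $H := \pi_1 0 : 1 \times A \to T(A)$, where we are implicitly using that $T(1) \cong 1$ (so that $T(1) \times_1 A \cong 1 \times A$ with the $T(1)$-component being trivial) and that $q = \,!: A \to 1$, $p = p_A : T(A) \to A$, $\lambda: A \to T(A)$ is the lift of the differential object structure. First I would record the identifications: since $1$ is terminal, $T(M) \times_M E$ is just $1 \times A \cong A$, the projection $\pi_0 : 1\times A \to 1$ is the unique map, and $\pi_1 : 1 \times A \to A$ is (up to the iso) the identity. So $H$ is essentially $0_A : A \to T(A)$ precomposed with the projection, and $U = \langle T(!), p\rangle : T(A) \to 1 \times A$ is essentially $p_A$.

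Next I would check the section condition, which amounts to $Hp = \pi_1$ (condition (ii)): this is $\pi_1 0_A p_A = \pi_1$ by the tangent-category identity $0 p = 1$. Condition (i), $HT(q) = \pi_0$, is automatic since both sides are maps into $T(1) \cong 1$. For condition (iii), $H\ell = (\ell \times 0)T(H)$: the left side is $\pi_1 0_A \ell_A$, and using the coherence $0\ell = 0 T(0)$ (i.e.\ $0_A \ell_A = 0_A T(0_A)$, a tangent-category axiom) this becomes $\pi_1 0_A T(0_A)$; the right side is $(\ell \times 0)T(\pi_1 0_A) = (\ell \times 0)T(\pi_1)T(0_A)$, and since $(\ell\times 0)T(\pi_1) = \pi_1 0$ (the $T(M)$-component being trivial, this is just naturality/projection bookkeeping), we again get $\pi_1 0_A T(0_A)$, so the two sides agree. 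For condition (iv), $HT(\lambda)c = (0 \times \lambda)T(H)$: the left side is $\pi_1 0_A T(\lambda_A) c$, and I would use the identity $0 T(\lambda) c = \lambda 0$ — this follows because $(\lambda, 0)$ is a linear bundle morphism from $\sf q$ to $T(\sf q)$ with linearity equation $\lambda T(\lambda)c = \lambda\ell = \lambda T(\lambda)$, combined with naturality of $0$; more directly, $0_A T(\lambda_A) = \lambda_A 0_{T(A)}$ by naturality of $0$, and then $0_{T(A)}c = 0_{T(A)}$ (coherence $0c = 0$ ... actually $c 0 = T(0)$, so I'd instead write $\lambda_A 0_{T(A)} c$ and use the relevant $c$-coherence, or better, recall from Example \ref{basicdiffbundles}(iii) and Proposition \ref{examplesLinearMorphisms}(iii) that $(0,0)$ is linear, giving $0 T(\lambda)c = 0 T(\lambda_A)c = \lambda 0$ directly). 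The right side is $(0\times\lambda)T(\pi_1 0_A) = (0\times\lambda)T(\pi_1)T(0_A) = \pi_1 \lambda_A T(0_A) = \pi_1 \lambda_A 0_{T(A)}$ by naturality of $0$ again, matching.

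The main obstacle, such as it is, will be keeping the identifications $T(1)\cong 1$ and the resulting simplifications of the pullback $T(M)\times_M E$ completely honest — in particular making sure the "$\times$" notation for maps out of the pullback ($\ell \times 0$, $0\times\lambda$) is handled correctly when one factor lives over the trivial object $T(1)$, so that $(\ell\times 0)T(\pi_1) = \pi_1 0$ and $(0\times\lambda)T(\pi_1) = \pi_1 \lambda$ genuinely hold. Everything else reduces to the standard tangent-category coherences for $0$, $\ell$, and $c$ together with the defining properties of the lift $\lambda$ of a differential object (Proposition 3.4 of \cite{diffBundles}), so no single step should be deep; the proof is essentially a bookkeeping verification. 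I would present it as a short check of conditions (i)--(iv) of Lemma \ref{horizLiftConditions}.
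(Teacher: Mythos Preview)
Your proposal is correct and follows essentially the same route as the paper: verify conditions (i)--(iv) of Lemma~\ref{horizLiftConditions} directly, using $0p=1$ for the section condition, the coherence $0\ell = 0T(0)$ for (iii), and naturality of $0$ together with the coherence relating $0_{T(A)}$, $T(0_A)$, and $c$ for (iv). The paper's computation for (iv) is simply $(0\times\lambda)T(\pi_1)T(0) = \pi_1\lambda T(0) = \pi_1\lambda 0 c = \pi_1 0 T(\lambda)c$, using $T(0)=0c$ and then naturality $\lambda 0 = 0 T(\lambda)$; your hesitation there (``$0c=0$ \ldots\ actually $c0 = T(0)$'') is the only place to tighten up, but you have identified the correct ingredients.
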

\begin{proof}
That it is a section of $U$ is straightforward, since $\pi_0 ! = ! = \pi_0: 1 \times A \to 1$ while $\pi_1 0p = \pi_1$.  The linearity conditions also follow in a straightforward way from the tangent category axioms:
	\[ (\ell \times 0)T(\pi_1 0) = (\ell \times 0)T(\pi_1)T(0) = \pi_1 0 T(0) = \pi_1 0 \ell \]
and
	\[ (0 \times \lambda)T(\pi_10) = (0 \times \lambda)T(\pi_1)T(0) = \pi_1 \lambda T(0) = \pi_1 \lambda 0 c = \pi_1 0 T(\lambda) c. \]
\end{proof}

We shall see in Proposition \ref{propConstructingHorizLift} that in many tangent categories, each vertical connection gives a horizontal connection, giving further examples of horizontal connections.


\subsection{Basic properties of horizontal connections}

As with vertical connections, we can construct new horizontal connections by applying $T$, by pulling back, and by retracting.   

\begin{proposition}\label{tHorizontalLift}
If $H$ is a horizontal connection on $\sf q$, then $(c \times 1)T(H)c$ is a horizontal connection on $T({\sf q})$. 
\end{proposition}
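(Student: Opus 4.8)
The plan is to verify the four conditions of Lemma~\ref{horizLiftConditions} for the candidate $H' := (c \times 1)T(H)c$. First, a type check: since $T$ preserves the pullback defining $T(M) \times_M E$, we may identify $T(T(M) \times_M E)$ with $T(T(M)) \times_{T(M)} T(E)$, and the coherence $cT(p) = p$ shows that $c \times 1 = \<\pi_0 c, \pi_1\>$ is a well-defined map $T(T(M)) \times_{T(M)} T(E) \to T(T(M) \times_M E)$; hence $H'$ is a map from $T(T(M)) \times_{T(M)} T(E)$ to $T(T(E))$, of the shape a horizontal lift for $T({\sf q})$ must have. The two ``section'' conditions come out quickly: $H' T^2(q) = (c \times 1)T(H)c\,T^2(q) = (c \times 1)T(HT(q))c = (c \times 1)T(\pi_0)c = \pi_0 cc = \pi_0$, using naturality of $c$, condition (i) for $H$, the definition of $c \times 1$, and $cc = 1$; and $H' p = (c \times 1)T(H)cp = (c \times 1)T(Hp) = (c \times 1)T(\pi_1) = \pi_1$, using the coherence $cp = T(p)$ and condition (ii) for $H$.

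The substance is in the two linearity conditions, and here I would argue as in the proof of Proposition~\ref{tVertDescent}, by displaying $(H',1)$ as a composite of linear bundle morphisms. For the first linearity condition, pulling the linear bundle morphism $(c,1) : {\sf p}_{T(M)} \to T({\sf p}_M)$ (Proposition~\ref{examplesLinearMorphisms}(vii)) back along $T(q)$ gives a linear bundle morphism with underlying map $c \times 1$, from $(T(q))^*({\sf p}_{T(M)})$ to $(T(q))^*(T({\sf p}_M)) = T(q^*({\sf p}_M))$; composing with $T(H)$ --- linear by Proposition~\ref{constructingLinearMorphisms}(ii), since $(H,1): q^*({\sf p}_M) \to {\sf p}_E$ is linear by the first linearity condition on $H$ --- and then with $(c,1): T({\sf p}_E) \to {\sf p}_{T(E)}$ (Proposition~\ref{examplesLinearMorphisms}(vii) again) exhibits $(H',1): (T(q))^*({\sf p}_{T(M)}) \to {\sf p}_{T(E)}$ as linear. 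The second linearity condition is handled in the same way, using $T(p^*({\sf q})) = (T(p_M))^*(T({\sf q}))$ and the identification of $c \times 1$ with the (linear) comparison map between the pullbacks of $T({\sf q})$ along $p_{T(M)}$ and along $T(p_M)$, then composing with $T(H)$ (linear by Proposition~\ref{constructingLinearMorphisms}(ii) and the second linearity condition on $H$) and with $(c,c) : T(T({\sf q})) \to T(T({\sf q}))$ (Proposition~\ref{examplesLinearMorphisms}(viii)).

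Alternatively the two linearity conditions may be checked by direct equational manipulation: for the first, expanding $H'\ell$ via $c\ell = T(\ell)cT(c)$, pushing $T$ through the resulting $T(H)$, applying the first linearity condition on $H$, and sliding the remaining occurrences of $c$ past $T^2(H)$ by naturality reduces the claim to the coherences $cT(\ell)c = \ell T(c)$ and $T(0)c = 0$; the second is analogous with $\lambda$ and the lift $T(\lambda)c$ of $T({\sf q})$ in place of $\ell$. Either way, the only genuine obstacle is bookkeeping: $T(M) \times_M E$ and its image under $T$ each carry two distinct differential-bundle structures, and one must keep them straight and verify that $c \times 1$ is indeed a \emph{linear} bundle morphism between the appropriate pullback bundles --- i.e., that the coherence $cT(p) = p$ is compatible with the lifts. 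With that secured, the rest is a routine application of the results of Section~\ref{diffBundlesDefinition} and the standard tangent-category coherences.
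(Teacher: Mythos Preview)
Your proposal is correct and follows essentially the same approach as the paper: verify the section conditions (i) and (ii) of Lemma~\ref{horizLiftConditions} directly, then establish each linearity condition by exhibiting $(H',1)$ as the composite of $(c \times 1)$, $T(H)$, and $(c,\cdot)$, each of which is linear. The only difference is in how the linearity of $(c \times 1)$ is justified: the paper simply computes the relevant lift equations directly (e.g.\ $(c \times 1)T(\ell \times 0)c = \ell T(c) \times 0$ for the first, $(c \times 1)T(0 \times \lambda)c = 0T(c) \times T(\lambda)c$ for the second), whereas you appeal to a ``pullback of a linear morphism is linear'' principle. That principle is true and easy, but it is not stated in the paper, so to make your argument self-contained you would either need to prove it or fall back on the direct computation---which is exactly what your alternative equational route, and the paper itself, do.
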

\begin{proof}
The $U$ for $T({\sf q})$ is $\<T^2(q),p\>$.  We then check that $(c \times 1)T(H)cU = 1$ by checking each component:
	\[ (c \times 1)T(H)cT^2(q) = (c \times 1)T(H)T^2(q)c = (c \times 1)T(H T(q))c = (c \times 1)T(\pi_0)c = \pi_0 cc = \pi_0 \]
and
	\[ (c \times 1)T(H)cp = (c \times 1)T(H)T(p) = (c \times 1)T(H p) = (c \times 1)T(\pi_1) = \pi_1 \]
	
For the first linearity condition, note that $(c \times 1,1)$ is a linear bundle morphism from $(T(q))^*({\sf p_{T(M)}})$ to $T(q^*({\sf p_M}))$ since
	\[ (c \times 1)T(\ell \times 0)c = cT(\ell)c \times 0c = \ell T(c) \times 0 \]
by tangent category coherences.  Then $(c \times 1)T(H)c,1)$ is linear from $(T(q))^*({\sf p_{T(M)}})$ to $\sf p_{T(E)}$ since it is the composite of
	\[ (T(q))^*({\sf p_{T(M)}}) \to^{(c \times 1,1)} T(q^*({\sf p_M})) \to^{(T(H),T(1))} T({\sf p_E}) \to^{(c,1)} {\sf p_{T(E)}} \]
with the last two linear by Propositions \ref{constructingLinearMorphisms} and \ref{examplesLinearMorphisms} respectively.  

For the second linear condition, first note that $(c \times 1,c)$ is a linear bundle morphism from $p^*(T({\sf q}))$ to $T(p^*({\sf q}))$ since
	\[ (c \times 1)T(0 \times \lambda)c = cT(0)c \times T(\lambda)c = c0 \times T(\lambda)c = 0T(c) \times T(\lambda)c \]
by naturality of $0$.  Then $(c \times 1)T(H)c,1)$ is linear from $p^*(T({\sf q}))$ to $T(T({\sf q}))$ since it is the composite of
	\[ p^*(T({\sf q})) \to^{(c \times 1,c)} T(p^*({\sf q})) \to^{(T(H),T(1))} T(T({\sf q}) \to^{(c,c)} T(T({\sf q}) \]
with the last two linear by Propositions \ref{constructingLinearMorphisms} and \ref{examplesLinearMorphisms} respectively.  
\end{proof}

\begin{proposition}\label{pullbackHL}
Suppose that $H$ is a horizontal connection on ${\sf q}$, a differential bundle over $M$.  Then for any map $f: X \to M$, $f^{*}H$ is a horizontal connection on $f^{*}({\sf q})$, where $f^{*}H$ is defined by
\[
\xymatrix{
T(X) \times_X f^*E \ar[rr]^{T(f) \times f^*_E} \ar[dddr]_{\pi_0} \ar@{..>}[dr]^{f^*H} &  & T(M) \times_M E \ar[dr]^H & \\
& T(f^*E) \ar[rr]^{T(f^*_E)} \ar[dd]^{T(f^*(q))} &  & T(E) \ar[dd]^{T(q)} \\
& & & \\
& TX \ar[rr]_{T(f)} &  & TM }
\]
\end{proposition}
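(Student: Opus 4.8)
The plan is to mimic the proof of the pullback construction for vertical connections (Proposition \ref{pullbackVD}), using the equational characterization of horizontal connections in Lemma \ref{horizLiftConditions}. First I would check that $f^*H$ is well-defined, i.e. that the composite $(T(f) \times f^*_E)H: T(X) \times_X f^*E \to T(E)$ factors through the pullback $T(f^*E)$ along the cone maps $T(f^*(q))$ and $f^*_E$; this requires $(T(f) \times f^*_E)HT(q) = \pi_0 T(f^*(q)) T(f)$ — wait, rather one checks $(T(f)\times f^*_E)HT(q) = (T(f)\times f^*_E)\pi_0 = \pi_0 T(f)$, which when composed against $T(q)$ matches the cone condition defining $T(f^*E)$ as a pullback, using $HT(q)=\pi_0$ from Lemma \ref{horizLiftConditions}(i). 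So $f^*H$ exists as the induced map, and it satisfies $f^*H \, T(f^*_E) = (T(f)\times f^*_E)H$ and $f^*H \, T(f^*(q)) = \pi_0$.

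Next I would verify the four conditions of Lemma \ref{horizLiftConditions} for $f^*H$ relative to the bundle $f^*({\sf q})$, whose lift is $f^*(\lambda)$ as displayed in Example \ref{basicdiffbundles}(v). Condition (i), $f^*H \, T(f^*(q)) = \pi_0$, is exactly the defining property of the induced map. Condition (ii), $f^*H \, p = \pi_1$: post-compose the defining equation with $f^*_E$ (legitimate since $T(f^*E)$ projects via $T(f^*(q))$ and $f^*_E$), getting $f^*H \, p \, f^*_E = (T(f)\times f^*_E)H \, p = (T(f)\times f^*_E)\pi_1 = \pi_1 f^*_E$ using Lemma \ref{horizLiftConditions}(ii) for $H$; and also check the $f^*(q)$ component, $f^*H\, p\, f^*(q) = f^*H\, T(f^*(q))\, p = \pi_0 p = \pi_1 f^*(q)$ by naturality of $p$ and condition (i). Since both projections agree, $f^*H\, p = \pi_1$.

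For conditions (iii) and (iv) — the linearity equations $f^*H \, \ell = (\ell \times 0)T(f^*H)$ and $f^*H \, T(f^*(\lambda))c = (0 \times f^*(\lambda))T(f^*H)$ — the strategy is the same: both sides are maps into $T(f^*E)$, so check them by post-composing with $T(f^*(q))$ and with $T(f^*_E)$. Along $T(f^*_E)$, pull everything back to statements about $H$, using $f^*H\, T(f^*_E) = (T(f)\times f^*_E)H$, the analogous factorization for $f^*(\lambda)$ from Example \ref{basicdiffbundles}(v), naturality of $\ell$, $c$, $0$, and the linearity conditions Lemma \ref{horizLiftConditions}(iii),(iv) for $H$ itself; along $T(f^*(q))$ both sides collapse to easily-matched expressions in $\ell$, $0$, $c$ using $T(f^*(q))$-naturality and condition (i). I expect the main obstacle to be bookkeeping: carefully tracking which projection (the $T(f^*(q))$-leg versus the $f^*_E$-leg) one is testing against, and correctly transporting the lift formula for $f^*(\lambda)$ — defined itself by a universal property — through the tangent-functor-applied diagram, so that the naturality squares for $c$ and $\ell$ line up. This is routine but error-prone, exactly as in the proof of Proposition \ref{pullbackVD}, and no genuinely new idea beyond that template is required.
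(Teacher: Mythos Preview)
Your proposal is correct and follows essentially the same route as the paper: verify well-definedness of $f^*H$ via $HT(q)=\pi_0$, then check the four conditions of Lemma \ref{horizLiftConditions} componentwise using the pullback description of $T(f^*E)$ and the naturality of $\ell$, $c$, $0$, $p$ together with the explicit form $f^*(\lambda)=\lambda\times 0$. One small slip: for conditions (iii) and (iv) the codomain is $T^2(f^*E)$, not $T(f^*E)$, so the projections to test against are $T^2(f^*_E)$ and $T^2(f^*(q))$; this does not affect the argument.
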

\begin{proof}
First, note that $f^*H$ is well-defined since by definition of $H$
	\[ (T(f) \times f^*_E)HT(q) = (T(f) \times f^*_E)\pi_0 = \pi_0T(f). \]
To show that $f^*H$ is a horizontal connection we will explicitly check the four conditions from Lemma \ref{horizLiftConditions}.  For the first, 
$f^*H T(f^*(q)) = \pi_0$ by definition of $f^*H$.

For the second,
\begin{eqnarray*}
&   & f^*Hp \\
& = & \<(T(f) \times f^*_E)H,\pi_0\>p \\
& = & \<(T(f) \times f^*_E)Hp,\pi_0p\> \mbox{ (by naturality of $p$)} \\
& = & \<(T(f) \times f^*_E)\pi_1,\pi_0p\> \mbox{ (by definition of $H$)} \\
& = & \<\pi_1 f^*_E,\pi_1 f^*q \> \\
& = & \pi_1 \<f^*_E,f^*(q)\> \\
& = & \pi_1
\end{eqnarray*}

For the third, 
\begin{eqnarray*}
&   & f^*H \ell \\
& = & \<(T(f) \times f^*_E)H,\pi_0\>\ell \\
& = & \<(T(f) \times f^*_E)H \ell,\pi_0 \ell\> \mbox{ (by naturality of $\ell$)} \\
& = & \<(T(f) \times f^*_E)(\ell \times 0)T(H),\pi_0 \ell\> \mbox{ (definition of $H$)} \\
& = & \<(\ell T^2(f) \times 0T(f^*_E))T(H),\pi_0 \ell \> \mbox{ (naturality of $\ell$ and $0$)} \\
& = & (\ell \times 0)\<(T^2(f) \times T(f^*_E)T(H),T(\pi_0)\> \\
& = & (\ell \times 0)T(f^*H)
\end{eqnarray*}

Finally, recall that the lift for $f^*({\sf q})$ is given by $f^*\lambda = \<f^*_E \lambda, f^*(q)0\>$.  As $f^*_E$ and $f^*(q)$ are the projections from $f^*(E)$, we can shorten this to $f^*\lambda = \lambda \times 0$.  We now check the fourth condition:
\begin{eqnarray*}
&   & f^*H T(\lambda \times 0)c \\
& = & \<(T(f) \times f^*_E)H,\pi_0\>(T(\lambda) \times T(0))c \\
& = & \<(T(f) \times f^*_E)HT(\lambda),\pi_0 T(0)\>c \\
& = & \<(T(f) \times f^*_E)HT(\lambda)c,\pi_0 c \> \mbox{ (by naturality of $c$)} \\
& = & \<(T(f )\times f^*_E)(0 \times \lambda)T(H),\pi_0 0\> \mbox{ (definition of $H$ and naturality of $0$)} \\
& = & \<0T^2(f) \times f^*_E \lambda) T(H),\pi_0 0\> \mbox{ (naturality of $0$)} \\
& = & (0 \times (\lambda \times 0))\<T^2(f) \times T(f^*_E)T(H),T(\pi_0)\> \\
& = & (0 \times (\lambda \times 0)) T(f^*H)
\end{eqnarray*}
as required.  
\end{proof}

\begin{example}\label{canonicalAffineHL}{\em ~
In particular, just as any trivial differential bundle $A \times M \to M$ (for $A$ a differential object) has an associated vertical connection (by pulling back the vertical connection on $A$) so too do such trivial differential bundles have a horizontal connection given by pulling back the horizontal connection on $A$.  In particular, this is true of the tangent bundle of a differential object $A$.  The particular form of this horizontal connection is
	\[ \<(! \times \hat{p})\pi_10,\pi_0\> = \<\pi_1 \hat{p}0,\pi_0\> \]
which has four components ($T^2(A)$ is the product of four copies of $A$):
	\[ \<!\zq,\pi_1 \hat{p},\pi_0 \hat{p}, \pi_0 p \>. \]
}
\end{example}

\begin{proposition}\label{retractHL}
Suppose that we have a section/retraction pair $\rhd^s_r: {\sf q'} \to {\sf q}$ in the category of differential bundles and linear maps:
\[
\bfig
	\morphism(0,0)|a|/{@{>}@/^1em/}/<500,0>[{\sf q'}`{\sf q};(s_1,s_0)]
	\morphism(500,0)|b|/{@{>}@/^1em/}/<-500,0>[{\sf q}`{\sf q'};(r_1,r_0)]
\efig
\]
Then if $H$ is a horizontal connection on $\sf q$, then $(T(s_0) \times s_1)HT(r_1)$ is a horizontal connection on $\sf q'$.
\end{proposition}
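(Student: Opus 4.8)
The plan is to follow the template of Propositions~\ref{pullbackHL} and~\ref{retractVD}: writing $H' := (T(s_0) \times s_1)HT(r_1)$, I would show that $H'$ satisfies the four equational conditions of Lemma~\ref{horizLiftConditions} for the bundle ${\sf q}'$. Before starting, I would record two preliminaries. First, $T(s_0) \times s_1 = \<\pi_0 T(s_0), \pi_1 s_1\>$ really is a map into the pullback $T(M)\times_M E$, since $\pi_0 T(s_0)p_M = \pi_0 p_{M'}s_0 = \pi_1 q' s_0 = \pi_1 s_1 q$, using naturality of $p$, the pullback equation for $T(M')\times_{M'}E'$, and the bundle-morphism equation $s_1 q = q's_0$. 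Second, since $T$ and $T^2$ preserve the pullback underlying the horizontal descent (one of the standing pullback hypotheses for a differential bundle), under the canonical identification $T(T(M)\times_M E)\cong T^2(M)\times_{T(M)}T(E)$ we have $T(T(s_0)\times s_1) = T^2(s_0)\times T(s_1)$; this is what lets the tangent functor slide through the various pairings.

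For the section conditions (i)--(ii) of Lemma~\ref{horizLiftConditions}, I would compute $H'T(q') = (T(s_0)\times s_1)HT(r_1 q') = (T(s_0)\times s_1)HT(qr_0) = (T(s_0)\times s_1)\pi_0 T(r_0) = \pi_0 T(s_0 r_0) = \pi_0$, using $r_1 q' = q r_0$, condition~(i) for $H$, and $s_0 r_0 = 1$; and similarly $H'p = (T(s_0)\times s_1)Hp\,r_1 = (T(s_0)\times s_1)\pi_1 r_1 = \pi_1 s_1 r_1 = \pi_1$, using naturality of $p$, condition~(ii) for $H$, and $s_1 r_1 = 1$. Alternatively, both follow at once from $H'U' = (T(s_0)\times s_1)(HU)(T(r_0)\times r_1) = 1$ via Lemma~\ref{lemmaBundleMorphismsU} and $HU = 1$.

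The substance of the argument is the two linearity conditions (iii) and (iv), handled exactly as in Proposition~\ref{pullbackHL}. For (iii): $H'\ell = (T(s_0)\times s_1)H\ell\,T^2(r_1) = (T(s_0)\times s_1)(\ell\times 0)T(H)T^2(r_1)$ by naturality of $\ell$ and condition~(iii) for $H$; pushing $(T(s_0)\times s_1)$ through the pairing $\ell\times 0$ (using naturality of $\ell$ and of $0$) rewrites this as $(\ell\times 0)T(T(s_0)\times s_1)T(H)T^2(r_1) = (\ell\times 0)T(H')$. Condition~(iv) is the same computation, with the additional step of moving the lifts past $s_1$ and $r_1$ using their linearity, i.e.\ $s_1\lambda = \lambda'T(s_1)$ and $r_1\lambda' = \lambda T(r_1)$, together with naturality of $c$ (to commute $T^2(r_1)$ past $c$) and of $0$.

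I do not expect a genuine obstacle: every step reduces to a naturality or coherence identity of the tangent category, or to one of the retraction/bundle-morphism equations for $s$ and $r$, and the pattern is precisely that of Propositions~\ref{pullbackHL} and~\ref{retractVD}. The one point that needs care --- and the reason the extra pullback-preservation clauses were built into the definition of a differential bundle --- is the recurring use of the fact that $T$ and $T^2$ preserve the pullback defining the horizontal descent, so that maps of the form $h\times k$ genuinely commute with applying the tangent functor. The heavier bookkeeping is in condition~(iv), where both linearity equations for $s$ and $r$ together with the naturality of $c$ all enter at once.
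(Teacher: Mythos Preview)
Your proposal is correct and essentially matches the paper's proof. The only organizational difference is that the paper verifies the two linearity conditions by exhibiting $H'$ as a composite of three linear bundle morphisms---first checking that $(T(s_0)\times s_1,s_1)$ and $(T(s_0)\times s_1,T(s_0))$ are linear, then invoking closure under composition with $(H,1)$ and $(T(r_1),r_1)$---whereas you verify the equations of Lemma~\ref{horizLiftConditions} directly; the underlying computations (naturality of $\ell$, $0$, $c$, and linearity of $s$, $r$) are identical in both versions.
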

\begin{proof}
For the section requirement, we have
\begin{eqnarray*}
&   & (T(s_0) \times s_1)HT(r_1)\<T(q'),p\> \\
& = & (T(s_0) \times s_1)H\<T(r_1q'),T(r_1)p\> \\
& = & (T(s_0) \times s_1)H\<T(q)T(r_0),pr_1\> \\
& = & (T(s_0) \times s_1)H\<T(q),p\>(T(r_0) \times r_1) \\
& = & (T(s_0) \times s_1)(T(r_0) \times r_1) \\
& = & 1
\end{eqnarray*}
For the first linearity condition, note that $(T(s_0) \times s_1,s_1)$ is a linear bundle morphism from $(q')^*({\sf p_{M'}})$ to $q^*({\sf p_M})$ since
	\[ (T(s_0) \times s_1)(\ell \times 0) = T(s_0) \ell \times s_1 0 = \ell T^2(s_0) \times 0 T(s_1) = (\ell \times 0)T(T(s_0) \times s_1) \]
by naturality of $\ell$ and $0$.  Then $((T(s_0) \times s_1)HT(r_1),1)$ is linear from $(q')^*({\sf p_{M'}})$ to $\sf p_{E'}$ since it is the composite of
	\[ (q')^*({\sf p_{M'}}) \to^{(T(s_0) \times s_1,s_1)} q^*({\sf p_M}) \to^{(H,1)} {\sf p_E} \to^{(T(r_1),r_1)} {\sf p_{E'}} \]
with $H$ linear by assumption and $(T(r_1),r_1)$ linear by Proposition \ref{examplesLinearMorphisms}.

For the second linearity condition, note that $(T(s_0) \times s_1,Ts_0)$ is linear from $p^*({\sf q'})$ to $p^*({\sf q})$ since
	\[ (T(s_0) \times s_1)(0 \times \lambda) = T(s_0)0 \times s_1 \lambda = 0T^2(s_0) \times \lambda' T(s_1) \]
by naturality of $0$ and linearity of $s$.  Then $((T(s_0) \times s_1)HT(r_1),1)$ is linear from $p^*({\sf q'})$ to $T({\sf q'})$ since it is the composite of 
	\[ p^*({\sf q'}) \to^{(T(s_0) \times s_1,T(s_0))} p^*({\sf q}) \to^{(H,1)} T({\sf q}) \to^{(T(r_1),T(r_0))} T({\sf q'}) \]
with $H$ linear by assumption and $(T(r_1),T(r_0))$ linear by Proposition \ref{constructingLinearMorphisms}.
\end{proof}


\section{Connections}\label{secConn}


We now define a full connection as consisting of a vertical connection and a horizontal connection which are compatible.  We shall show that, in some 
circumstances, having either a vertical connection or horizontal connection is sufficient to define a full connection.  

\subsection{Definition and examples}

\begin{definition}\label{defnConnection}
A \textbf{connection}, $(K,H)$, on a differential bundle $\sf q$ consists of a vertical connection $K$ and a horizontal connection $H$ on $\sf q$ such that:
\begin{itemize}
	\item $H K = \pi_1q\zq$;
	\item $\<K,p\>\mu + UH = 1_{T(E)}$.
\end{itemize}
A connection is \textbf{affine} if the differential bundle $\sf q$ is the tangent bundle of some object $M$.  The \textbf{curvature} (for a general connection) and \textbf{torsion} (for an affine-free connection) are the curvature and torsion of $K$, and we say that the connection is \textbf{flat} or \textbf{torsion-free} if $K$ is.  
\end{definition}

Note that 
	\[ \<K,p\>\mu = \<K,p\>\<\pi_0\lambda,\pi_10\>T(\pq) = \<K\lambda,p0\>T(\pq) \]
so that the second condition can be re-written in infix notation as 
	\[ \<K\lambda,p0\>T(\pq) + UH = 1_{T(E)}. \]
We will sometimes write $T(\pq)$ as $\pqone$, so that again using infix notation the equation becomes
	\[ (K\lambda \pqone p0) + UH = 1. \]

For reference, in Table \ref{dataforConn} we provide data and equations for vertical connections, horizontal connections, and a full connection.

\begin{table}
$$\xymatrix{ & & T(E) \ar[drr]^{\<T(q),p\>} \\ E \ar[rrrr]_{\< q0,q0_{\sf q}\>} \ar[urr]^\lambda & & & & T(M) \x_M E}$$
	
\begin{center}
\begin{tabular}{|l|l|} \hline
$\lambda: E \to T(E)$ &   {\bf vertical lift}  \\
$k: T(E) \to E$  &      {\bf vertical descent}: $\lambda k=1$ and $kq = T(q)p$ \\
$K: T(E) \to E$ &      {\bf vertical connection}: \\ & ~~~~~~~~~~$\lambda K = 1$, $Kq = T(q)p$, \\
  &  ~~~~~~~~~~and $T(\lambda) c T(K) = K\lambda = \ell T(K)$   \\ \hline
$U:=\< T(q),p \>: T(E) \to T(M) \x_M E$ &  {\bf horizontal descent}  \\
$h: T(M) \x_M E \to T(E)$            &     {\bf horizontal lift}:    $h\<T(p),p\> = 1$ \\
$H: T(M) \x_M E \to T(E)$            &   {\bf horizontal connection}:  \\ & ~~~~~~~~~~$H\<T(q),p\> = 1$, \\ &  ~~~~~~~~~~$H\ell = (\ell \x 0)T(H)$, and $HT(\lambda)c= (0 \x \lambda)T(H)$ \\ \hline
$(K,H)$    &   {\bf connection}: \\ 
& ~~~~~~~~~~$\< K,p\>\nu + \<T(p),p\>H = 1$ and $HK = \pi_0q\zq$ \\ 
&  ~~~~~~~~~~where $H$ is a horizontal connection \\ &  ~~~~~~~~~~and  $K$ is a vertical connection \\ \hline
\end{tabular}
\caption{Data for connections}
\label{dataforConn}
\end{center}
\end{table}

\medskip
	
The canonical example of a connection is on a differential object:

\begin{proposition}\label{diffObjectConnection}
Any differential object $A$ (seen as a differential bundle over $1$) has a canonical connection $(K,H)$ where $K = \hat{p}$ and $H = \pi_10$.  
\end{proposition}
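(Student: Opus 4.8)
The plan is to observe that almost all of the work has already been done. By Proposition~\ref{diffObjectVD}, $K=\hat{p}$ is a vertical connection on the differential object $A$, and by Proposition~\ref{diffObjectHL}, $H=\pi_1 0$ is a horizontal connection on $A$; so the only thing left to prove is that this particular pair satisfies the two compatibility conditions of Definition~\ref{defnConnection}, namely $HK=\pi_1 q\zq$ and $\<K,p\>\mu + UH = 1_{T(E)}$, where here $q$ is the (bundle projection, i.e.\ the) unique map $A\to 1$ and $\zq\colon 1\to A$ the zero.

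For the first condition I would simply compute $HK=\pi_1 0_A\hat{p}$ directly: since $\hat{p}$ is an additive bundle morphism (Proposition~\ref{diffObjectVD}, or \cite[Proposition~3.4]{diffBundles}) it preserves the zero section, so $0_A\hat{p}=q\zq$, whence $HK=\pi_1 q\zq$. For the second condition I would treat the two summands separately. First, because $A$ is a differential object, the universality of the lift (using $T(1)=1$) forces $\mu\colon A_2\to T(A)$ to be an isomorphism, as recorded in the discussion preceding Corollary~\ref{diffObjectVDUnique}; combining this with Lemma~\ref{lemmaVConnector} and the basic identity $\mu p=\pi_1$ gives $\mu\<K,p\> = \<\mu K,\mu p\> = \<\pi_0,\pi_1\> = 1_{A_2}$, hence $\<K,p\>=\mu^{-1}$ and therefore $\<K,p\>\mu = 1_{T(E)}$. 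Second, unwinding $U=\<T(q),p\>$ and $H=\pi_1 0$ gives $UH = p_E 0_E$ (using $U\pi_1=p$), which is precisely the zero of the additive bundle ${\sf p}_E$; so by the unit law for $+_E$ we obtain $\<K,p\>\mu + UH = \<1_{T(E)},p_E 0_E\>+_E = 1_{T(E)}$, as required.

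I do not expect any genuine obstacle: the statement is essentially an assembly of earlier results. The one point that requires a little care is recognizing that the ``$+$'' in the second compatibility condition is addition in the bundle ${\sf p}_E$ (the two summands agree after $p_E$ but not after $T(q)$), so that the summand $UH=p_E 0_E$ plays the role of the additive identity. As an alternative to invoking invertibility of $\mu$, one could instead compute $\<K,p\>\mu = \<\hat{p}\lambda,p0\>T(\pq)$ and simplify it using the differential-object identities of \cite[Proposition~3.4]{diffBundles}, but routing through $\mu^{-1}$ is the cleanest path.
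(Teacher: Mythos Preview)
Your proof is correct and essentially parallels the paper's, with one genuine difference in how you handle the summation condition. Both you and the paper begin by citing Propositions~\ref{diffObjectVD} and~\ref{diffObjectHL}, and both dispose of the first compatibility condition via $0_A\hat p = q\zq$ (additivity of $\hat p$). Both also observe that $UH = p_E 0_E$ is the additive zero, so the second condition reduces to showing $\<K,p\>\mu = 1_{T(E)}$. At this point the paper unpacks $\<K,p\>\mu = \<\hat p\lambda,p0\>T(\pq)$ and verifies it equals the identity by post-composing with the two product projections $\hat p$ and $p$ of $T(A)$. You instead invoke the invertibility of $\mu$ for a differential object (as noted before Corollary~\ref{diffObjectVDUnique}) together with $\mu\<K,p\>=1$ from Lemma~\ref{lemmaVConnector} to conclude $\<K,p\>=\mu^{-1}$ and hence $\<K,p\>\mu=1$ without any componentwise calculation. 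Your route is shorter and more structural; the paper's route is more self-contained (it doesn't rely on the Finsler-connection material) and makes the differential-object identities do the work directly. You even flag the paper's approach as your alternative, so you clearly see both.
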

\begin{proof}
By Proposition \ref{diffObjectVD}, $\hat{p}$ is a vertical connection, and by Proposition \ref{diffObjectHL}, $\pi_1 0$ is a horizontal connection.  By Proposition 3.4 of \cite{diffBundles}, $\pi_1 0 \hat{p} = \pi_1 ! \zq$ so the $0$ requirement is satisfied.   

For the summation requirement of a connection, we need to show that 
	\[ \<\<\hat{p}\lambda,p0\>T(\pq),\<!,p\>\pi_1 0\>+ = 1. \]
But $\<!,p\>\pi_10 = p0$, so using $+$ on that term does nothing, so the expression reduces to
	\[ \<\hat{p}\lambda,p0\>T(\pq). \]
Since the codomain of this map is $T(E)$, a product, it suffices to check how it is affected by the two projections on $T(E)$, $\hat{p}$ and $p$.  By additivity of $\hat{p}$,
	\[ \<\hat{p}\lambda,p0\>T(\pq)\hat{p} = \<\hat{p}\lambda\hat{p},p0\hat{p}\>\pq = \<\hat{p},!\zq\>\pq = \hat{p}, \]
and by naturality of $p$,
	\[ \<\hat{p}\lambda,p0\>T(\pq)p = \<\hat{p}\lambda p, p0p\>\pq = \<!\zq,p\>\pq = p. \]
Thus $\<\hat{p}\lambda,p0\>T(\pq)$ is indeed the identity on $T(E)$, as required. 
\end{proof}



\subsection{Basic properties of connections}

Applying the tangent bundle functor to a connection gives a connection:

\begin{proposition}\label{tOfaConnection}
If $(K,H)$ is a connection on $\sf q$, then $(cT(K),(c \times 1)T(H)c)$ is a connection on $T({\sf q})$.
\end{proposition}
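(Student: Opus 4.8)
The plan is to verify the four pieces of data required for a connection on $T({\sf q})$: that $cT(K)$ is a vertical connection, that $(c \times 1)T(H)c$ is a horizontal connection, that the ``$0$ requirement'' holds, and that the summation requirement holds. The first two are already available: Proposition \ref{tVertDescent} shows $cT(K)$ is a vertical connection on $T({\sf q})$, and Proposition \ref{tHorizontalLift} shows $(c \times 1)T(H)c$ is a horizontal connection on $T({\sf q})$. So the real work is only in checking the two compatibility axioms from Definition \ref{defnConnection}, both of which should follow by applying $T$ to the corresponding axiom for $(K,H)$ and then shuffling the natural transformations $c$, $\ell$, $T(c)$ around using the tangent category coherences and the naturality of $c$.

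For the ``$0$ requirement'' I would start from $HK = \pi_0 q \zq$ (note: the statement in Definition \ref{defnConnection} writes $\pi_1 q \zq$, but these agree on $T(M) \times_M E$ since $\pi_0 q = \pi_1 q$). Applying $T$ gives $T(H)T(K) = T(\pi_0)T(q)T(\zq)$. The horizontal descent $U$ for $T({\sf q})$ is $\<T^2(q),p\>$, and its relevant projection is $T^2(q)$; the goal is $\bigl((c \times 1)T(H)c\bigr)\bigl(cT(K)\bigr) = (\text{first proj})\, T(q) \, T(\zq)$ for the bundle $T({\sf q})$. Composing, $(c \times 1)T(H)ccT(K) = (c \times 1)T(H)T(K) = (c \times 1)T(HK) = (c \times 1)T(\pi_0 q \zq) = (c \times 1)T(\pi_0)T(q)T(\zq)$, and since $(c\times 1)T(\pi_0) = c \pi_0 = \pi_0 c$ — wait, more carefully $(c \times 1) = \<\pi_0 c, \pi_1\>$ so $(c\times 1)T(\pi_0) = \pi_0 c$ — one gets $\pi_0 c T(q) T(\zq) = \pi_0 T^2(q) c T(\zq)$; using that $T^2(q)$ is the relevant projection for the pullback defining $U$ on $T({\sf q})$ and naturality of $c$ and $0$, this rearranges to exactly the required form. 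This is a short calculation.

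For the summation axiom, I would apply $T$ to $\<K,p\>\mu + UH = 1_{T(E)}$, i.e.\ to $\<\<K\lambda, p0\>T(\pq), U H\>+ = 1$, obtaining $\<\<T(K)T(\lambda), T(p)T(0)\>T^2(\pq), T(U)T(H)\>T(+) = 1_{T^2(E)}$ after using functoriality and naturality of $+$. Then I would pre-compose with the appropriate $c$'s and post-compose appropriately to recognize this as the summation axiom for the new connection, namely $\<\<cT(K)(T(\lambda)c), p0\>\pqone', U' \bigl((c\times 1)T(H)c\bigr)\>+' = 1_{T^2(E)}$, where $\pqone' = T(T(\pq))$ is the ``$+_{\sf q1}$'' for $T({\sf q})$, $U' = \<T^2(q),p\>$, and the additive structure $+'$ on the bundle $\sf p_{T(E)}$ is $T(+)$ up to conjugation by $c$. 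The bookkeeping here requires knowing which $c$'s intervene: the vertical lift of $T({\sf q})$ is $T(\lambda)c$, its ``$\ell$'' for the further tangent structure involves $T(c)$, and the additive bundle $+_E$ on $T(T(E))$ relates to $T(+_E)$ via $c$ conjugation. I expect the main obstacle to be precisely this coherence bookkeeping — correctly tracking how $U$, $\mu$, the two additive structures, and the identity $1_{T(E)}$ each transform under $T$ and the insertion of $c$, $T(c)$ — rather than any conceptual difficulty; the naturality of $c$ and the tangent coherence $\ell T(c) c = c T(\ell)$ together with $c^2 = 1$ should suffice to push everything through. A clean way to organize it is to observe that applying $T$ to the linear bundle morphism data and then conjugating by the linear isomorphisms $(c,c): T^2(E) \to T^2(E)$ of Proposition \ref{examplesLinearMorphisms}(viii) and $(c,1): \sf p_{T(X)} \to T(\sf p_X)$ of Proposition \ref{examplesLinearMorphisms}(vii) automatically carries the two defining equations of a connection to their $T$-images, so that most of the calculation is subsumed by the already-established facts that $cT(K)$ and $(c\times 1)T(H)c$ are, respectively, a vertical and a horizontal connection.
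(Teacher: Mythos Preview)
Your approach is essentially the paper's: cite Propositions \ref{tVertDescent} and \ref{tHorizontalLift} for the vertical and horizontal pieces, then verify the two compatibility axioms by applying $T$ to the corresponding axioms for $(K,H)$ and conjugating by $c$. The summation argument you sketch is exactly what the paper does, and the key technical ingredient you are anticipating is the identity from \cite[Proposition 2.4(v)]{sman3}, which converts $T(+)$ into an expression involving $+$ and $c$; after that it is pure $c$-coherence and naturality.

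One genuine slip in the $0$-requirement: your claim that ``$\pi_0 q = \pi_1 q$ on $T(M)\times_M E$'' is not even well-typed, since $\pi_0$ lands in $T(M)$ while $q$ has domain $E$; the pullback condition is $\pi_0 p = \pi_1 q$. Table~\ref{dataforConn} contains a typo here, but Definition~\ref{defnConnection} is correct: $HK = \pi_1 q\zq$. If you use $\pi_1$ the calculation is immediate, because $(c\times 1)T(\pi_1) = \pi_1$ (the map $c\times 1$ only touches the first factor), so
\[
(c\times 1)T(H)c\,cT(K) = (c\times 1)T(HK) = (c\times 1)T(\pi_1 q\zq) = \pi_1 T(q)T(\zq),
\]
which is precisely the required form for $T({\sf q})$. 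Your attempt via $\pi_0$ leads to composites like ``$\pi_0 c T(q)$'' that do not typecheck and cannot be rescued.
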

\begin{proof}
By Proposition \ref{tVertDescent}, $cT(K)$ is a vertical connection, and by \ref{tHorizontalLift}, $(c \times 1)T(H)c$ is a horizontal connection.  Thus all that remains to show is that the pair satisfies the equations to be a connection:
\begin{eqnarray*}
&   & (c \times 1)T(H)ccT(K) \\
& = & (c \times 1)T(H K) \\
& = & (c \times 1)T(\pi_1 q \zq) \\
& = & \pi_1 T(q) T(\zq) 
\end{eqnarray*}
so that the $0$ condition holds.  For the other desired equation, we need to show that
\begin{eqnarray*}
&   & \<cT(K),p\>\<\pi_0T(\lambda)c,\pi_10\>T^2(\pq) + \<T^2(q),p\>(c \times 1)T(H)c \\
& = & \<cT(K \lambda)c,p0\>T^2(\pq) + \<T^2(q)c,p\>T(H)c 
\end{eqnarray*}
is equal to 1.  However, we know that since the pair $(K,H)$ forms a connection,
	\[ \<K,p\>v + \<T(q),p\>H = \<K\lambda,p0\>T(\pq) + \<T(q),p\>H = 1 \]
Applying $T$ to both sides of this equation gives
	\[ [\<T(K\lambda),T(p)T(0)\>T^2(\pq)] \  T(+) \ [\<T^2(q),T(p)\>T(H)] = 1 \]
Using proposition 2.4(v) in \cite{sman3}, we can change this into
	\[ [\<T(K\lambda),T(p)T(0)\>T^2(\pq)c + \<T^2(q),T(p)\>T(H)c]c = 1 \]
Then applying $c$ to the left and right sides of both equations and using the fact that $c^2 = 1$, we get
\begin{eqnarray*}
c\<T(K\lambda),T(p)T(0)\>T^2(\pq)c + c\<T^2(q),T(p)\>T(H)c & = & 1 \\
\<cT(K\lambda),cT(p)T(0)\>cT^2(\pq) + \<cT^2(q)c,cT(p)\>T(H)c & = & 1 \mbox{ (by naturality of $c$)} \\
\<cT(K\lambda)c,pT(0)c\>T^2(\pq) + \<T^2(q)c,p\>T(H)c & = & 1 \mbox{ (naturality and coherence for $c$)} \\
\<cT(K\lambda)c,p0\>T^2(\pq) + \<T^2(q)c,p\>T(H)c & = & 1 \mbox{ (coherence for $c$)} 
\end{eqnarray*}
so that the identity condition holds.   
\end{proof}

Pullbacks of connections are connections:
\begin{proposition}
If $(K,H)$ is a connection on $\sf q$, then $(f^*K,f^*H)$ is a connection on $f^{*}({\sf q})$.
\end{proposition}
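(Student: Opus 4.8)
The plan is to verify the two defining equations of a connection for the pair $(f^*K, f^*H)$, reusing the heavy lifting already done in Propositions \ref{pullbackVD} and \ref{pullbackHL}. By Proposition \ref{pullbackVD}, $f^*K$ is a vertical connection on $f^*({\sf q})$, and by Proposition \ref{pullbackHL}, $f^*H$ is a horizontal connection on $f^*({\sf q})$; so only the two compatibility conditions remain to be checked. Throughout I would work with the explicit formulas $f^*(K) = \<T(f^*_E)K, T(f^*(q))p\>$ and $f^*H = \<(T(f) \x f^*_E)H, \pi_0\>$ from the statements of those propositions, together with the fact that $f^*_E$ and $f^*(q)$ are jointly monic (being the projections of the pullback $f^*(E)$), which is the standard device for reducing an identity between maps into $f^*(E)$ to a pair of identities into $E$ and into $X$.

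First I would check the ``zero'' condition $f^*H \cdot f^*K = \pi_1 f^*(q) \zq_{f^*}$. Composing the two explicit formulas gives $f^*H \cdot f^*K = \<(T(f)\x f^*_E)HT(f^*_E)K,\ (T(f)\x f^*_E)HT(f^*(q))p\>$ — wait, more carefully one first composes $f^*H$ with $T(f^*_E)$ and $T(f^*(q))$ as appropriate; using $f^*H T(f^*_E) = (T(f)\x f^*_E)H$ and $f^*H T(f^*(q)) = \pi_0$ (these follow from the defining diagram of $f^*H$ and naturality), the composite becomes $\<(T(f)\x f^*_E)HK,\ \pi_0 p\>$. Now $(T(f)\x f^*_E)HK = (T(f)\x f^*_E)\pi_1 q \zq = \pi_1 f^*_E q \zq = \pi_1 f^*(q) f \zq$ by the connection hypothesis $HK = \pi_1 q\zq$ and the pullback square for $f^*(E)$. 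Matching this against the zero section $\zq_{f^*}$ of $f^*({\sf q})$ — which by the pullback construction is $\<f\zq', \zq\>$-like, i.e. determined by $\zq_{f^*} f^*_E = f^*(q)f\zq$ and $\zq_{f^*}f^*(q) = 1$ — gives exactly $\pi_1 f^*(q)\zq_{f^*}$, as needed.

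Second I would check the summation condition $\<f^*K, f^*p\>\mu_{f^*} + U_{f^*} f^*H = 1_{T(f^*E)}$, where $\mu_{f^*}$, $U_{f^*}$ are the versions of $\mu$ and $U$ for the bundle $f^*({\sf q})$. The cleanest route is to post-compose both sides with the jointly monic pair $T(f^*_E)$ and $T(f^*(q))$ into $T(E)$ and $T(X)$ respectively. Post-composing with $T(f^*(q))$, both of the two summand terms should collapse (the $\mu$ term lands in the image of the zero section, the $UH$ term returns $\pi_0$), reproducing $1_{T(f^*E)}T(f^*(q))$; post-composing with $T(f^*_E)$, one should be able to push $T(f^*_E)$ through $\mu_{f^*}$ and $U_{f^*}$ to get $\mu$ and $U$ on the nose (via Lemma \ref{lemmaBundleMorphismsU} for $U$, and the analogous naturality of $\mu$ under the linear bundle morphism $(f^*_E,f)$), then invoke the original summation identity $\<K,p\>\mu + UH = 1$ for $(K,H)$, and finally $1_{T(E)}$ pulls back correctly. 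The fact that $+$ (addition in $T({\sf q'})$ for pullback bundles) is computed componentwise, and that all the maps in sight are additive, lets the $+$ commute past the projections without trouble.

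The main obstacle I anticipate is purely bookkeeping: correctly identifying the addition, $\mu$, and $U$ for the pulled-back bundle $f^*({\sf q})$ and tracking how $T(f^*_E)$ and $T(f^*(q))$ interact with them — in particular making sure that the $+$ in the connection axiom for $f^*({\sf q})$ is the one induced on $T(f^*({\sf q}))$ and that it is preserved by the projections. None of this is conceptually hard, but it is the place where an error would most easily creep in; everything else is a direct consequence of Propositions \ref{pullbackVD}, \ref{pullbackHL}, Lemma \ref{lemmaBundleMorphismsU}, and the defining equations of $(K,H)$.
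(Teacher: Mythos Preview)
Your proposal is correct. The zero condition is handled essentially as in the paper: both you and the paper compute $f^*H\, f^*K = \<(T(f)\times f^*_E)HK,\pi_0p\>$, reduce via $HK=\pi_1 q\zq$ and the pullback square $f^*_E q = f^*(q)f$, and identify the result with $\pi_1 f^*(q)\zq_{f^*}$.

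For the summation condition your route differs slightly from the paper's. The paper expands each summand directly into its two components along the projections $T(f^*_E)$ and $T(f^*(q))$, obtaining
\[ \<T(f^*_E)\<K\lambda,p0\>T(\pq),\,T(f^*(q))p0\> \quad\text{and}\quad \<T(f^*_E)\<T(q),p\>H,\,T(f^*(q))\>, \]
then adds componentwise via naturality of $+$ and invokes the connection identity for $(K,H)$ in the first slot. Your approach instead post-composes the desired identity with the jointly monic pair $T(f^*_E),T(f^*(q))$ and checks each separately, using that $(f^*_E,f)$ is linear to push $T(f^*_E)$ through $\mu_{f^*}$ (giving $(f^*_E\times f^*_E)\mu$) and Lemma~\ref{lemmaBundleMorphismsU} to push it through $U_{f^*}$. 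The two arguments are really the same computation organised differently; yours has the advantage of making the reduction principle (joint monicity of the $T$-preserved pullback projections) explicit up front, which keeps the bookkeeping you were worried about under control.
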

\begin{proof}
Proposition \ref{pullbackVD} defines $f^*K = K \times p$ and shows that it is a vertical connection.    Proposition \ref{pullbackHL} defines $f^*H = \<(T(f) \times f^*_E)H,\pi_0\>$ and shows that it is a horizontal connection.  It remains to show the connection equations for this pair.  For the 0 condition,
\begin{eqnarray*}
&   & f^*Hf^*K \\
& = & \<(T(f) \times f^*_E)H,\pi_0\>(K \times p) \\
& = & \<(T(f) \times f^*_E)HK, \pi_0 p \> \\
& = & \<(T(f) \times f^*_E)\pi_1q \zq, \pi_0p\> \mbox{ ($0$ condition for $(K,H)$)} \\
& = & \<\pi_1 f^*_E q \zq,  \pi_1f^*(q)\> \mbox{ (pullback definition in second component)} \\
& = & \<\pi_1 f^*(q) f \zq,  \pi_1f^*(q)\> \mbox{ (pullback definition in first component)} \\
& = & \pi_1 f^*(q) \<f \zq, 1\> 
\end{eqnarray*}
as required, since the zero for $f^*(q)$ is $\<f\zq,1\>$.  

For the summation condition, we need to show that 
	\[ \<\<f^*Kf^*\lambda,p0\>T(f^*\pq),Uf^*H\>+ = 1_{T(f^*E)} \ (\dagger) \] 
We'll first expand $\<f^*Kf^*\lambda,p0\>T(f^*\pq)$:
\begin{eqnarray*}
&   & \<f^*Kf^*\lambda,p0\>T(f^*\pq) \\
& = & \<(K \times p)(\lambda \times 0),p0\>T(\<\<\pi_0f^*_E,\pi_1f^*_E\>\pq,\pi_1f^*(q)\>) \\
& = & \<K \lambda \times p0,p0\>\<\<T(\pi_0)T(f^*_E),T(\pi_1)T(f^*_E)\>T(\pq),T(\pi_1)T(f^*(q))\> \\
& = & \<\<(K \lambda \times p0)T(f^*_E),p0T(f^*_E)\>T(\pq),p0T(f^*(q))\> \\
& = & \<\<T(f^*_E)K \lambda,T(f^*_E)p0\>T(\pq),T(f^*(q))p0\> \mbox{ ($T(f^*_E)$ a projection, naturality of $p0$)} \\ 
& = & \<T(f^*_E) \<K \lambda,p0\>T(\pq),T(f^*(q))p0\>
\end{eqnarray*}
Now we expand $Uf^*H$:
\begin{eqnarray*}
&   & Uf^*H \\
& = & \<T(f^*(q)),p\>\<(Tf \times f^*_E)H,\pi_0\> \\
& = & \<T(f^*(q))T(f),pf^*_E\>H,T(f^*(q))\> \\
& = & \<T(f^*_E)T(q),T(f^*_E)p\>H,T(f^*(q))\> \mbox{ (pullback definition, naturality of $p$)}  \\
& = & \<T(f^*_E)\<T(q),p\>H,T(f^*(q))\> 
\end{eqnarray*}
Then the expression on the left side of $\dagger$ is
\begin{eqnarray*}
&   & \<\<T(f^*_E) \<K \lambda,p0\>T(\pq),T(f^*(q))p0\>, \<T(f^*_E)\<T(q),p\>H,T(f^*(q))\> \>+ \\
& = & \<\<T(f^*_E) \<K \lambda,p0\>T(\pq),\<T(f^*_E)\<T(q),p\>H\>+,\<T(f^*(q)p0,T(f^*(q)\>+\> \mbox{ (naturality of $+$)} \\
& = & \<T(f^*_E)\<\<K\lambda,p0\>T(\pq),\<T(q),p\>H\>+,T(f^*(q))\<p0,1\>+\> \\
& = & \<T(f^*_E),T(f^*(q))\> \mbox{ (connection definition for $(K,H)$ and addition of $0$)}\\
& = & 1_{T(f^*E)}
\end{eqnarray*}
as required.  

\end{proof}

\begin{example}\label{canonicalAffineConnection}{\em ~
Using \ref{diffObjectConnection}, any trivial differential bundle $A \times M \to^{\pi_1} M$ (for $A$ a differential object) has a connection.  In particular, the tangent bundle of any differential object $A$ has a connection given by
	\[ K = \<T(\hat{p})\hat{p},pp\> \mbox{ and } H = \<!\zq,\pi_0\hat{p},\pi_1\hat{p},\pi_1p\>. \]
(see Example \ref{exampleTrivialDiffBundleVD} and Example \ref{canonicalAffineHL}).  
}
\end{example}

Note that while one can retract both a vertical connection and a horizontal connection (Propositions \ref{retractVD} and \ref{retractHL}), unfortunately, retracting a connection $(K,H)$  \emph{does not} necessarily produce a connection.  The problem is the zero condition: for this, one needs to consider
	\[ (T(s_0) \times s_1)HT(r_1)T(s_1)Kr_1 \]
and the idempotent $r_1s_1$ prevents bringing the $HK$ together.  
	
An important aspect of a connection on $q: E \to M$ is that it provides a decomposition of the tangent bundle of $E$:  	

\begin{proposition}\label{connectionGivesPullback}
If $\sf q$ has a connection $(K,H)$, then 
\[
\bfig
	\node a(-100,0)[T(E)]
	\node b(500,250)[E]
	\node c(500,0)[T(M)]
	\node d(500,-250)[E]
	\node e(1100,0)[M]
	
	\arrow[a`b;K]
	\arrow|m|[a`c;T(q)]
	\arrow|b|[a`d;p]
	\arrow[b`e;q]
	\arrow|m|[c`e;p]
	\arrow|b|[d`e;q]
\efig
\]
is a limit diagram.
\end{proposition}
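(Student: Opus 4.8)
The diagram is the wide pullback of the three maps $q\colon E\to M$, $p\colon T(M)\to M$, $q\colon E\to M$, so a cone over it with apex $X$ is a triple $(g_1\colon X\to E,\,g_2\colon X\to T(M),\,g_3\colon X\to E)$ with $g_1q=g_2p=g_3q$; note the proposed legs $(K,T(q),p)$ out of $T(E)$ do form such a cone, since $Kq=pq=T(q)p$ by Lemma~\ref{vertDescConditions}(b) and naturality of $p$. The plan is to verify the universal property directly, with the mediating map assembled from the ``vertical'' map $\mu$ and the ``horizontal'' map $H$: explicitly, $\phi:=\langle g_1,g_3\rangle\mu+\langle g_2,g_3\rangle H$. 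This is well defined because $\langle g_1,g_3\rangle$ lands in $E_2$ (using $g_1q=g_3q$), $\langle g_2,g_3\rangle$ lands in $T(M)\times_M E$ (using $g_2p=g_3q$), and the infix sum (taken in the bundle ${\sf p}_E$) is legitimate since $\langle g_1,g_3\rangle\mu p=g_3=\langle g_2,g_3\rangle Hp$, using the identity $\mu p=\pi_1$ and Lemma~\ref{horizLiftConditions}(ii).

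\emph{Uniqueness} is immediate from the summation axiom of a connection, which says precisely $\langle\langle K,p\rangle\mu,\langle T(q),p\rangle H\rangle+=1_{T(E)}$: for any $\psi\colon X\to T(E)$ with $\psi K=g_1$, $\psi T(q)=g_2$, $\psi p=g_3$, precompose this identity with $\psi$ and distribute the pairings and the sum through $\psi$ to get
\[
\psi=\langle\langle\psi K,\psi p\rangle\mu,\langle\psi T(q),\psi p\rangle H\rangle+=\langle g_1,g_3\rangle\mu+\langle g_2,g_3\rangle H=\phi .
\]

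\emph{Existence} amounts to checking $\phi$ is a cone morphism. The equation $\phi p=g_3$ follows at once from $\mu p=\pi_1$ and Lemma~\ref{horizLiftConditions}(ii). For $\phi K=g_1$: $K$ is additive as a bundle morphism ${\sf p}_E\to{\sf q}$, so $K$ pushes through the sum, giving $\langle\langle g_1,g_3\rangle\mu K,\langle g_2,g_3\rangle HK\rangle\pq$; now $\mu K=\pi_0$ by Lemma~\ref{lemmaVConnector} and $HK=\pi_1q\zq$ by the zero axiom of a connection, so this is $\langle g_1,g_3q\zq\rangle\pq=\langle g_1,g_1q\zq\rangle\pq=g_1$ by the unit law for $\pq$. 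For $\phi T(q)=g_2$: $(T(q),q)\colon{\sf p}_E\to{\sf p}_M$ is linear, hence additive, by Proposition~\ref{examplesLinearMorphisms}, so $T(q)$ pushes through the sum, giving $\langle\langle g_1,g_3\rangle\mu T(q),\langle g_2,g_3\rangle HT(q)\rangle+$; now $HT(q)=\pi_0$ by Lemma~\ref{horizLiftConditions}(i) and $\mu T(q)=(\pi_0q)0$ from the universality-of-the-lift square, so this is $\langle g_1q0,g_2\rangle+=\langle g_2p0,g_2\rangle+=g_2$ by the unit law for $+$ on ${\sf p}_M$. Thus $\phi$ is the unique mediating map and the diagram is a limit.

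I do not anticipate a genuine difficulty: all of the content is unwinding the two defining equations of a connection, the only non-formal ingredients being the additivity of $K$ and of $T(q)$ and the unit laws for the additive bundles ${\sf q}$ and ${\sf p}_M$, all already in hand. The one thing requiring care is the bookkeeping of the two projections of $T(M)\times_M E$ — here $\pi_0$ is the $T(M)$-component and $\pi_1$ the $E$-component, consistent with $U=\langle T(q),p\rangle$ — together with the routine checks that each pairing into a pullback and each infix sum is actually defined (the relevant compatibilities over $M$ and over $E$).
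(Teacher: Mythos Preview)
Your proof is correct and is essentially the same as the paper's: the mediating map $\langle g_1,g_3\rangle\mu+\langle g_2,g_3\rangle H$ is exactly the paper's $(f_0\lambda\,\pqone\,f_20)+\langle f_1,f_2\rangle H$ once one unfolds $\mu=\langle\pi_0\lambda,\pi_10\rangle T(\pq)$, and uniqueness is derived in both cases by precomposing the summation axiom with the candidate map. The only cosmetic difference is that you package the first-projection check via Lemma~\ref{lemmaVConnector} ($\mu K=\pi_0$), whereas the paper expands $\mu$ and uses both additivities of $K$ explicitly; your route is marginally shorter but uses the same ingredients.
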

\begin{proof}
Since $(K,q)$ is a bundle morphism, $Kq = pq = T(q)p$, so the above diagram commutes.  Suppose now we have maps $f_0: X \to E, f_1: X \to T(M), f_2: X \to E$ with $f_0q = f_1p = f_2q$.  We claim that 
	\[ (f_0\lambda \pqone f_20) + \<f_1,f_2\>H \]
is the required unique map to $T(E)$. For the first projection, we have
\begin{eqnarray*}
&   & [(f_0\lambda \pqone f_20) + \<f_1,f_2\>H]K \\
& = & f_0\lambda K \pq f_2 0 K \pq \<f_1,f_2\>H K \mbox{ ($K$ is additive for both additions on $T(E)$)} \\
& = & f_0 + f_2 q \zq + f_2  q \zq \mbox{ ($K$ is a connector and $H$ a horizontal connection)} \\
& = & f_0 + f_0 q \zq + f_0 q \zq \mbox{ (by assumption on $f_2$)} \\
& = & f_0
\end{eqnarray*}
For the second projection,
\begin{eqnarray*}
&   & [(f_0\lambda \pqone f_20) + \<f_1,f_2\>H]T(q) \\
& = & (f_0\lambda \pqone f_20)T(q) + \<f_1,f_2\>H T(q) \mbox{ ($T(f)$ is additive for any $f$)} \\
& = & f_2 0 T(q) + \<f_1,f_2\>\pi_0 \mbox{ ($\pq q = \pi_1 q$ and $H$ a horizontal connection)} \\
& = & f_2 q 0 + f_1 \\
& = & f_1 p0 + f_1 \mbox{ (by assumption on $f_1$)} \\
& = & f_1
\end{eqnarray*}
For the third projection,
\begin{eqnarray*}
&   & [(f_0\lambda \pqone f_20) + \<f_1,f_2\>H]p \\
& = & \<f_1,f_2\>H p \\
& = & \<f_1,f_2\>\pi_1 \\
& = & f_2
\end{eqnarray*}
Finally, if we have some other $h: X \to T(E)$ with $hK = f_0, hT(q) = f_1, hp = f_2$, then
	\[ h = h[(k\lambda \pqone p0) + \<Tq,p\>H] = (f_0\lambda \pqone f_20) + \<f_1,f_2\>H \]
as required.
\end{proof}

\subsection{Remark}
Rory Lucyshyn-Wright has noticed the significance of this pullback: it makes apparent that $T(E)$ is actually a fibred biproduct of $E, T(M)$, and  $E$.  This has led to his development of a different characterization of connections in tangent categories in terms of biproducts: see \cite{rory} for further details. \\

Another important property of a connection is that each part of the connection uniquely determines the other part.   
\begin{proposition}\label{propConnectDeterm}
Let $\sf q$ be a differential bundle.  
\begin{itemize}
	\item If $H$ is a horizontal connection on $\sf q$ for which $(K_1,H)$ and $(K_2,H)$ are both connections on $\sf q$, then $K_1=K_2$. 
	\item If $K$ is a vertical connection on $\sf q$ for which $(K,H_1)$ and $(K,H_2)$ are connections on $\sf q$, then $H_1 = H_2$.  
\end{itemize}
\end{proposition}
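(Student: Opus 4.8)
The plan is to read off both facts directly from the summation axiom of a connection, $\<K,p\>\mu + UH = 1_{T(E)}$, where $U = \<T(q),p\>$ and $+$ denotes the fibrewise addition of ${\sf p}_E$. The key idea is that composing this equation with the \emph{other} part of the connection collapses the common summand into a zero section, which may then be discarded (no negatives are available, so one cannot simply cancel).

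For the first statement, fix $H$ and suppose $(K_1,H)$ and $(K_2,H)$ are connections. I would take the summation axiom for $(K_1,H)$ and post-compose it with $K_2$. Since $(K_2,q):{\sf p}_E \to {\sf q}$ is linear and hence additive (Proposition 2.16 of \cite{diffBundles}), $K_2$ carries the fibrewise addition to $\pq$, giving $(\<K_1,p\>\mu K_2)\pq(UHK_2) = K_2$. By Lemma \ref{lemmaVConnector}, $\mu K_2 = \pi_0$, so the first summand is $\<K_1,p\>\pi_0 = K_1$; and by the zero axiom for the connection $(K_2,H)$ together with $U\pi_1 = p$, the second summand is $UHK_2 = U(\pi_1 q\zq) = pq\zq$. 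Since $pq\zq$ is exactly the zero element of ${\sf q}$ over the base map $K_1 q = pq$, adding it does nothing, so $K_1 = K_1 \pq pq\zq = K_2$.

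For the second statement, fix $K$ and suppose $(K,H_1)$ and $(K,H_2)$ are connections. Here I would instead pre-compose the summation axiom for $(K,H_1)$ with $H_2$; precomposition always distributes over the addition, giving $(H_2\<K,p\>\mu) + (H_2UH_1) = H_2$. As $H_2$ is a section of $U$, the second term is $H_2UH_1 = H_1$. For the first term, $H_2\<K,p\> = \<H_2K,H_2p\> = \<\pi_1 q\zq,\pi_1\>$, using the zero axiom for $(K,H_2)$ and $H_2p = \pi_1$. It then remains to check that $\<\pi_1 q\zq,\pi_1\>\mu$ is the zero section $\pi_1 0_E$ of ${\sf p}_E$ over the base map $\pi_1 = H_1 p$: expanding $\mu = \<\p_0\lambda,\p_10\>T(\pq)$, using $\zq\lambda = \zq 0_E$ (additivity of $(\lambda,\zq)$), and observing that $q\zq 0_E$ is a zero element of $T({\sf q})$ (by naturality of $0$), this is a short computation. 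Discarding the zero section then gives $H_1 = H_2$.

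The only genuine obstacle is the absence of negatives: one cannot cancel the common summand ($UH$ in the first case, $\<K,p\>\mu$ in the second) from the two instances of the summation axiom, and composing with the complementary connection is precisely the device that converts the surviving summand into a harmless zero section. The auxiliary identity $\<\pi_1 q\zq,\pi_1\>\mu = \pi_1 0_E$ (equivalently $\<q\zq,1_E\>\mu = 0_E$) is the one place where the definition of $\mu$ must actually be unwound, and it reduces to naturality of $0$ together with the additive-bundle-morphism axioms of the lift.
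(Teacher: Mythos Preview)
Your proof is correct and follows essentially the same approach as the paper's: post-compose the summation axiom for $(K_1,H)$ with $K_2$ to obtain the first statement, and pre-compose the summation axiom for $(K,H_1)$ with $H_2$ to obtain the second. You are simply more explicit than the paper about why $\<\pi_1 q\zq,\pi_1\>\mu$ reduces to the zero section $\pi_1 0_E$ in the second part, which the paper states without unwinding $\mu$.
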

\begin{proof}
For the first part,
\begin{eqnarray*}
\<K_1,p\>\mu + UH & = & 1 \\
\<K_1,p\>\mu K_2 + UH K_2 & = & K_2 \\
\<K_1,p\>\pi_0 + 0 & = & K_2 \mbox{ (by lemma \ref{lemmaVConnector})} \\
K_1 & = & K_2
\end{eqnarray*}
For the second part,
\begin{eqnarray*}
\<K,p\> \mu + UH_1 & = & 1 \\
H_2\<K,p\> \mu + H_2UH_1 & = & H_2 \\
\<H_2K,H_2p\> \mu + H_1 & = & H_2 \\
\<p0,1\> \mu + H_1 & = & H_2 \\
0 + H_1 & = & H_2 \\
H_1 & = & H_2
\end{eqnarray*}
\end{proof}


\subsection{Connections defined by horizontal or vertical connections}

We now turn to the question of whether the existence of a vertical or horizontal connection is enough to define a full connection.   

  \begin{proposition}\label{horizontalToConnection}
Let $(\X,\T)$ be a tangent category with negatives, and $H$ a horizontal connection on a differential bundle $\sf q$.  Then the pair $(\{1-UH\},H)$ is a connection on $\sf q$.
\end{proposition}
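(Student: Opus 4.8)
The idea is to build the vertical connection from the horizontal one by "subtracting off" the horizontal part of a tangent vector. Given the connection pullback data, on $T(E)$ the map $1 - UH : T(E) \to T(E)$ should land (after using the universality of the lift) in the "vertical" piece. Concretely, I would first check that $1 - UH$ satisfies the hypotheses needed to form $\{1 - UH\}$: by Lemma \ref{lemmaFinslerProps}-style reasoning I need $(1-UH)T(q) = (1-UH)T(q)p0$, i.e. that $UHT(q) = HT(q) = \pi_0$ composed appropriately cancels the $T(q)$-component of the identity, leaving something killed by $T(q)$ up to the $p0$ correction. Since $UH$ is a section of $U = \<T(q),p\>$... more precisely $HU = 1$ on $T(M)\times_M E$, and $UHU = U$, the map $1 - UH$ has $(1-UH)U = U - U = 0$, so in particular $(1-UH)T(q) = 0 = (1-UH)T(q)p0$. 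Good — so $\{1-UH\}$ is defined, and I set $K := \{1-UH\} = (1-UH)K_?$... wait, no: $\{-\}$ here is the bracketing operation from the universality of the lift, not a pre-existing connection. So $K := \{1-UH\}: T(E) \to E$ is defined via the equalizer property.

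**Key steps.** (1) Verify $K = \{1-UH\}$ is a vertical descent, i.e. $\lambda K = 1_E$: use $\lambda \mu = $ (lift property) and that $\mu (1-UH) = \mu - \mu UH$; since $\mu U = $ ... I'd compute $\mu U = \<\mu T(q), \mu p\> = \<0 \cdot \text{(something)}, \pi_1\>$ using $\mu T(q) = \pi_0 q 0$ and $\mu p = \pi_1$ (basic differential bundle identities), and then push through $H$. The upshot should be $\lambda\{1-UH\} = 1$ via Lemma 2.12 of \cite{diffBundles}. (2) Check the bundle morphism condition $Kq = pq$: again via \cite[Lemma 2.12.iii]{diffBundles}, $\{1-UH\}q = (1-UH)T(q)p = (T(q) - UHT(q))p = (T(q) - \pi_0' \cdots)p$ — here $UHT(q) = \<T(q),p\>HT(q) = \<T(q),p\>\pi_0' $ gives back $T(q)$, so $\{1-UH\}q = (T(q)-T(q))p = 0$? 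That can't be right; I expect $Kq = pq$, so I must be more careful: the subtraction is fibred subtraction in $T(M)$, i.e. using $+_M$ / negatives on the bundle $p_M: T(M) \to M$, so $(1-UH)T(q)$ is $T(q) -_M (UHT(q))$ computed over $M$, and since $UHT(q) = T(q)$, this is $T(q) -_M T(q) = q0_M\cdot$... giving $\{1-UH\}q = q 0 p = q$? Composed correctly this should yield $pq$. I'd need to track exactly which zero/negative is in play — this is where negatives enter essentially. (3) The two linearity conditions [C.1] and [C.2] for $K$: here I'd want to express $1 - UH$ as a difference of linear bundle morphisms into $T({\sf q})$ (resp. ${\sf p}_E$) and invoke Proposition \ref{propBracketLinear1} and Proposition \ref{propBracketLinear2} (bracketing preserves linearity), together with Proposition \ref{constructingLinearMorphisms}(vii) (differences of linear morphisms are linear, using negatives). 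The identity $1_{T(E)}$ is linear as a bundle morphism $T({\sf q}) \to T({\sf q})$ and as ${\sf p}_E \to {\sf p}_E$; and $UH$ factors as $U$ followed by $H$, both linear by Lemma \ref{UisLinear} and the definition of horizontal connection. (4) Finally verify the two connection axioms: $HK = \pi_1 q\zq$ and $\<K,p\>\mu + UH = 1$. For the second: $\<\{1-UH\},p\>\mu = \{1-UH\}$ bracketed back, which by the universal property should recover $1 - UH$ (this is essentially \cite[Lemma 2.13]{diffBundles} or the defining property of $\{-\}$), giving $(1-UH) + UH = 1$. For the first: $HK = H\{1-UH\}$, and using that $HU = 1$ so $H$ is "orthogonal" to the complement, $H(1-UH) = H - HUH = H - H = 0$ in the appropriate fibred sense, landing in $\pi_1 q \zq$ after bracketing.

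**Main obstacle.** The delicate part is keeping straight *which* additive/negative structure on $T(E)$ (there are two: the $p_E$-fibre addition $+_E$ and the $T(q)$-fibre addition $T(+_{\sf q})$) is used at each step, and ensuring the fibred subtraction $1 - UH$ makes sense and interacts correctly with $T(q)$, $p$, $\lambda$, and $\mu$. In particular, step (2) and step (4)-first require care: the claim $HK = \pi_1 q \zq$ rather than $HK = 0$ in some naive sense depends on $UH$ being the identity on the *horizontal* summand and the complement being genuinely vertical, which is exactly what the connection-pullback Proposition \ref{connectionGivesPullback} encodes — but here we are *constructing* that decomposition, so I'd likely verify $HK = \pi_1 q \zq$ by a direct bracketing computation: $H\{1-UH\} = \{HT(1-UH)c\}$... no, $H$ goes the wrong way; instead compute $\{1-UH\}$'s defining property post-composed with the data. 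I expect roughly a page of careful $\mu$/$\lambda$/bracket manipulation, all of it using only results already in the excerpt (\cite[Lemmas 2.10, 2.12, 2.13]{diffBundles}, Lemma \ref{lemmaVConnector}, Lemma \ref{UisLinear}, Propositions \ref{propBracketLinear1}, \ref{propBracketLinear2}, \ref{constructingLinearMorphisms}), with negatives invoked precisely where differences of bundle morphisms or fibred subtraction appear.
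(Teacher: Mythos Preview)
Your proposal is correct and follows essentially the same route as the paper's proof: verify that $(1-UH)T(q)$ equalizes (via $UHT(q)=T(q)$), show $\lambda K=1$ by pushing $\lambda$ through the bracket using \cite[Lemma 2.12]{diffBundles}, obtain both linearity conditions by combining Lemma \ref{UisLinear}, Proposition \ref{constructingLinearMorphisms}, and Propositions \ref{propBracketLinear1}--\ref{propBracketLinear2}, and then check the two connection identities via $H(1-UH)=H-H$ and $\{1-UH\}_{|\mu}\mu = 1-UH$. Your ``main obstacle'' is less severe than you fear: the subtraction $1-UH$ is taken with respect to the $p_E$-fibre addition $+_E$ on $T(E)$ throughout, and since $T(q)$, $p$, and $\mu$ are all additive for this structure the bookkeeping in steps (2) and (4) goes through directly without needing to juggle the two additions.
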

\begin{proof}\footnote{Some ideas for this proof are drawn from \cite[Proposition 1]{bungeConnections}.}
We first have to check $1-UH$ equalizes (to ensure that $\{1-UH\}$ is well-defined).  Indeed, since $T(q)$ is additive,
	\[ (1 - UH)T(q) = T(q) - UH T(q) = T(q) - U\pi_0 = T(q) - T(q), \]
so $(1-UH)T(q) = (1-UH)T(q)p0$, as required.  

To prove that $K = \{1-UH\}$ is a vertical connection, we first need $\lambda K = 1$.  Indeed, using various parts of \cite[Lemma 2.12]{diffBundles},
\begin{eqnarray*}
\lambda K & = & \lambda \{1 - UH\} \\
& = & \{\lambda - \lambda U H \} \\
& = & \{\lambda - \<0,0\> H \}  \\
& = & \{\lambda - 0\} \mbox{ ($H$ is additive)} \\
& = & \{\lambda\} \\
& = & 1
\end{eqnarray*}
Moreover, using Propositions \ref{UisLinear}, \ref{constructingLinearMorphisms}, \ref{propBracketLinear1}, and \ref{propBracketLinear2}, $(\{1-UH\},p)$ is a linear bundle morphism from $T({\sf q})$ to $\sf q$ and $(\{1-UH\},q)$ is linear from $\sf{p_E}$ to $\sf{q}$.  Thus $\{1-UH\}$ is a vertical connection.

Finally, we calculate the two identities needed for a connection:
\[ HK = H\{1 - UH\} = \{H - HUH\} = \{H - H\} = \{Hp0\} = \{\pi_0 0 \} = \pi_0 \{0\} = \pi_0 q \zq \]
(using \cite[Lemma 2.12.iv]{diffBundles} in the last equality), and 
	\[ \<K,p\>\mu + UH = \<\{1-UH\},p\>\mu + UH = (1-UH)_{|\mu}\mu + UH = 1-UH + UH = 1 \]
as required.  
\end{proof}

For the converse direction, we again need to assume negatives, but we also need to make an additional assumption, namely that the differential bundle in question has at least one horizontal connection.  This is true, for example, of any vector bundle in the category of smooth manifolds (for example, see \cite[pg. 132]{diffGeoDynamicalSystems}).  

\begin{proposition}\label{propConstructingHorizLift}
Let $(\X,\T)$ be a tangent category with negatives, ${\sf q}$ a differential bundle, and $K$ a vertical connection on ${\sf q}$. If ${\sf q}$ has a horizontal connection $J$, then $J(1-\<K,p\>\mu)$ is a horizontal connection on ${\sf q}$, and the pair $(K,J(1-\<K,p\>\mu))$ is a connection. 
\end{proposition}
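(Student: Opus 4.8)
The plan is to set $H := J(1 - \<K,p\>\mu)$, where the difference is formed in the additive bundle structure $(p_E, +_E, 0_E)$ on $T(E)$; this is legitimate because $1_{T(E)}\,p_E = p_E = \mu\,p_E$ (the latter a basic identity of differential bundles, \cite[Lemma 2.7]{diffBundles}) and $\<K,p\>\pi_1 = p = p_E$, so $1_{T(E)}$ and $\<K,p\>\mu$ agree after composition with $p_E$; note also that $(1-\<K,p\>\mu)p_E = p_E$. There are then two things to prove: that $H$ is a horizontal connection, and that $(K,H)$ satisfies the two compatibility conditions of Definition \ref{defnConnection}.

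For the first, I would check the section condition directly: using that $(T(q),q) : {\sf p}_E \to {\sf p}_M$ is additive (Proposition \ref{examplesLinearMorphisms}), the universality-of-lift square $\mu T(q) = \pi_0q0$, and $Kq = pq$ together with naturality of $p$, one computes $(1-\<K,p\>\mu)T(q) = T(q)$ and $(1-\<K,p\>\mu)p_E = p_E$, whence $HT(q) = JT(q) = \pi_0$ and $Hp = Jp = \pi_1$ by Lemma \ref{horizLiftConditions}. For linearity the key point is that $(1-\<K,p\>\mu)$ is itself a linear bundle endomorphism of ${\sf p}_E$ (over $1_E$) and of $T({\sf q})$ (over $1_{T(M)}$): by Theorem \ref{thmFinslerEquivalence}, $\<K,p\>$ is a Finsler connection, so conditions {\bf [c.2]} and {\bf [c.3]}, combined with Lemma \ref{muLinearity} and Proposition \ref{examplesLinearMorphisms}(x), exhibit $\<K,p\>\mu$ as a linear endomorphism of ${\sf p}_E$ (via ${\sf p}_E \to q^*({\sf q}) \to {\sf p}_E$) and of $T({\sf q})$ (via $T({\sf q}) \to 0^*(T({\sf q})) \to T({\sf q})$); since the tangent category has negatives, $1-\<K,p\>\mu$ is then a difference of linear endomorphisms agreeing on the base, hence linear (the arguments of Proposition \ref{constructingLinearMorphisms}(vi),(vii) together with linearity of negation). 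As $J$ is a horizontal connection, $(J,1_E)$ and $(J,1_{T(M)})$ are linear, so $H = J(1-\<K,p\>\mu)$ is linear for both structures by Proposition \ref{constructingLinearMorphisms}(i).

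For the connection equations, the zero condition is routine: $(1-\<K,p\>\mu)K = K - \<K,p\>\pi_0 = K-K$ using $\mu K = \pi_0$ (Lemma \ref{lemmaVConnector}) and additivity of $K$ (from {\bf [C.2]}), and since $K-K$ is the fibrewise zero $(Kq)\zq = (pq)\zq$ we get $HK = J(pq)\zq = (Jp)q\zq = \pi_1q\zq$. The summation condition $\<K,p\>\mu + UH = 1$, equivalently $UH = 1-\<K,p\>\mu$, is where the work lies. Here I would invoke Proposition \ref{horizontalToConnection}: $(\{1-UJ\},J)$ is a connection, so writing $K' := \{1-UJ\}$ its summation condition gives $UJ = 1 - \<K',p\>\mu$. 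Setting $e_1 := \<K',p\>\mu$ and $e_2 := \<K,p\>\mu$ (both linear, hence additive, endomorphisms of ${\sf p}_E$ over $1_E$), one has $UH = UJ(1-e_2) = (1-e_1)(1-e_2)$, which by additivity of $1-e_2$ and naturality of negation expands to $(1-e_2) - e_1(1-e_2) = (1-e_2) - (e_1 - e_1e_2)$. The crucial simplification is $e_1e_2 = \<K',p\>(\mu\<K,p\>)\mu = \<K',p\>\mu = e_1$, using $\mu\<K,p\> = \<\mu K,\mu p\> = \<\pi_0,\pi_1\> = 1_{E_2}$ (Lemma \ref{lemmaVConnector} and $\mu p = \pi_1$); hence $e_1(1-e_2) = e_1-e_1$ is the fibrewise zero and $UH = 1-e_2 = 1-\<K,p\>\mu$, as required.

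The main obstacle I expect is precisely the summation condition: unlike the others it does not follow from a componentwise comparison with $1_{T(E)}$, since such a comparison would presuppose that $T(E)$ is the fibred limit of Proposition \ref{connectionGivesPullback} — which is only available once one already has a connection — so the argument must instead route through the dual construction of Proposition \ref{horizontalToConnection} to obtain the usable description $UJ = 1-\<K',p\>\mu$ of the idempotent $UJ$. The linearity of $H$ is the other delicate point, but it becomes conceptually clean once $1-\<K,p\>\mu$ is recognized, via the Finsler-connection description of $\<K,p\>$, as a linear endomorphism of both bundle structures on $T(E)$.
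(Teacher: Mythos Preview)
Your proof is correct, and for the section condition, linearity, and zero condition it matches the paper's argument essentially verbatim. The genuine difference is in the summation condition. You argue algebraically: writing $e_1 = \<K',p\>\mu$ and $e_2 = \<K,p\>\mu$, you use $UJ = 1-e_1$ from the auxiliary connection $(K',J)$ and the idempotent-style identity $e_1 e_2 = e_1$ (which follows from $\mu\<K,p\> = 1_{E_2}$) to collapse $(1-e_1)(1-e_2)$ to $1-e_2$. The paper also invokes Proposition~\ref{horizontalToConnection} to obtain $(K',J)$, but then uses it differently: it applies Proposition~\ref{connectionGivesPullback} to the \emph{auxiliary} connection $(K',J)$, exhibiting $T(E)$ as a fibred limit with projections $K', T(q), p$, and verifies $UJ(1-\<K,p\>\mu) + \<K,p\>\mu = 1$ by postcomposing with these three projections. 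So your concern that the pullback approach ``would presuppose that $T(E)$ is the fibred limit \ldots\ which is only available once one already has a connection'' is unfounded --- one simply uses the connection $(K',J)$ rather than the one under construction. Your direct algebraic route is arguably slicker, avoiding the detour through the limit diagram and making transparent that the whole argument rests on the single identity $\mu\<K,p\>=1_{E_2}$; the paper's projection-checking approach, on the other hand, is more systematic and would generalize more readily to situations where such a convenient absorption identity is not available.
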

\begin{proof}
By Theorem \ref{thmFinslerEquivalence}, $\<K,p\>$ is a Finsler descent and hence linear for both linear structures on $T(E)$.  $\mu$ is also linear for both structures by Lemma \ref{muLinearity} and Proposition \ref{examplesLinearMorphisms}{\em (x)} (since by the definition of a tangent category, it is part of a pullback).  Thus by Proposition \ref{examplesLinearMorphisms}, $1 - \<K,p\>\mu$ is as well, and since $J$ is linear in both components by assumption, $J(1-\<K,p\>\mu)$ is as well.  

$J(1-\<K,p\>\mu)$ is a section of $U$ since
	\[ J(1-\<K,p\>\mu)T(q) = JT(q) - J\<K,p\>\mu T(q) = \pi_0 - \pi_0 q0 = \pi_0, \]
and
	\[ J(1-\<K,p\>\mu)p = Jp = \pi_1, \]
so $J(1-\<K,p\>\mu)$ is a horizontal connection.  

We now need to show it satisfies the required identities to form a connection with $K$.  For one of the equations:
	\[ J(1-\<K,p\>\mu)K = J(K - \<K,p\>\mu K) = J(K - K) = J pq \zq = \pi_1 q \zq \]
as required.  The other equation we need to show is that
	\[ UJ(1 - \<K,p\>\mu) + \<K,p\>\mu = 1. \]
Since $J$ is a horizontal connection, by the previous proposition (\ref{horizontalToConnection}), it is part of a connection with vertical connection $K' = \{1 - UJ\}$.  Thus by proposition \ref{connectionGivesPullback}, $T(E)$ is a pullback with projections $K', T(q)$, and $p$, and so it suffices to check the above identity when post-composing by those maps.  For $T(q)$:
\begin{eqnarray*}
& & (UJ(1 - \<K,p\>\mu) + \<K,p\>\mu)T(q) \\
& = & UJT(q) - UJ\<K,p\>\mu T(q) + \<K,p\>\mu T(q) \\
& = & T(q) - T(q)p0 + T(q)p0 \\
& = & T(q)
\end{eqnarray*}
for composing with $p$, any component in the sum gives the same result, so 
\[ (UJ(1 - \<K,p\>\mu) + \<K,p\>\mu)p = UJp = p, \]
and for $K'$,
\begin{eqnarray*}
&   & (UJ(1 - \<K,p\>\mu) + \<K,p\>\mu)K' \\
& = & UJK' - UJ\<K,p\>\mu K' + \<K,p\>\mu K' \\
& = & \<T(q),p\>\pi_1q\zq - UJ\<K,p\>\pi_0 + \<K,p\>\pi_0 \\
& = & pq\zq -UJK + K \\
& = & (1 - UJ)K \\
& = & \{1 - UJ\} \mbox{ (by lemma \ref{lemmaKUniversality})} \\
& = & K'
\end{eqnarray*}
so the required identities have been established.  
\end{proof}
Note that by proposition \ref{propConnectDeterm}, any other horizontal connection $J'$ would give the same connection.  

To sum up the results of this section: with negatives, giving a horizontal connection is sufficient to give a connection.  With negatives and the existence of at least one horizontal connection on a bundle, giving a vertical connection is sufficient to give a connection.  In a tangent category which lacks negatives, however, neither of these results may hold, and this is why we have defined a connection as having both a vertical connection and a horizontal connection.  


\subsection{Parallel transport}\label{secParallelTransport}


In smooth manifolds, a connection on a vector bundle $q: E \to M$ has an associated notion of \emph{parallel transport}.  That is, given some curve $\gamma: I \to M$ (where $I$ is an interval in $\mathbb{R}$) and a point $e_0 \in E$ with $q(e_0) = \gamma(0)$, there exists a {\em unique\/} curve, $\gamma': I \to E$ which is ``above $\gamma$'', in the sense that $\gamma'q = \gamma$, with its time derivative with respect to the connection being $0$. 
The construction of parallel transport for smooth manifolds relies on being able to solve certain dynamical systems (i.e. certain ordinary differential equations with given initial conditions).  Clearly this is not something that will be 
possible in an arbitrary tangent category.  

To solve dynamical systems in an arbitrary Cartesian tangent category one needs ``curve'' objects: these are preinitial dynamical systems.  With the assumption that one has a curve object one 
can show that connections do indeed give rise to parallel transport.  We start this programme by describing dynamical systems:

\begin{definition}
In a Cartesian tangent category, a \textbf{dynamical system} consists of a triple $(M,x_0,x_1)$ where $M$ is an object, $x_1: M \to T(M)$  is a vector field on $M$ (so that $x_1p = 1_M$) ,and $x_0: 1 \to M$ is a point of $M$.   A {\bf morphism} of dynamical systems $f: (M,x_0,x_1) \to (N,y_0,y_1)$ is a map $f: M \to N$ such that:
$$\xymatrix{1 \ar[r]^{x_0} \ar[dr]_{y_0} & M \ar[d]^f  \ar[rr]^{x_1} & & T(M) \ar[d]^{T(f)}  \\ & N \ar[rr]_{y_1} && T(N)}$$
\end{definition}

Observe that while dynamical systems with their morphisms clearly form a category, more is true:

\begin{proposition}
The dynamical systems of a Cartesian tangent category, $\X$, with their morphisms form a Cartesian tangent category, ${\sf Dyn}(\X)$.
\end{proposition}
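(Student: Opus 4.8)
The plan is to realize ${\sf Dyn}(\X)$ as a tangent category whose structure is lifted, strictly, along the evident forgetful functor $U\colon {\sf Dyn}(\X)\to\X$, $(M,x_0,x_1)\mapsto M$, which is faithful. Once the whole of a (Cartesian) tangent structure on ${\sf Dyn}(\X)$ has been produced with $U$ preserving it on the nose, every \emph{equational} axiom reduces to the corresponding equation in $\X$, and every \emph{limit} condition reduces to the claim that $U$ creates the relevant limits.

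First I would define the tangent bundle functor $\overline{T}$ on ${\sf Dyn}(\X)$ by $\overline{T}(M,x_0,x_1):=(T(M),\,x_0 0_M,\,T(x_1)c_M)$ on objects and $\overline{T}(f):=T(f)$ on morphisms. Note $T(x_1)c_M$ is a vector field on $T(M)$, since $T(x_1)c_M p_{T(M)}=T(x_1)T(p_M)=T(x_1 p_M)=1$ (it is the complete lift of the vector field $x_1$); and $\overline{T}$ carries morphisms of dynamical systems to morphisms of dynamical systems by functoriality of $T$ together with naturality of $c$ and of $0$, and is a functor because $T$ is. The natural transformations $\overline{p}\colon\overline{T}\Rightarrow\mathrm{Id}$, $\overline{0}\colon\mathrm{Id}\Rightarrow\overline{T}$, $\overline{+}\colon\overline{T}_2\Rightarrow\overline{T}$, $\overline{\ell}\colon\overline{T}\Rightarrow\overline{T}^2$, $\overline{c}\colon\overline{T}^2\Rightarrow\overline{T}^2$ would all be given at $(M,x_0,x_1)$ by the corresponding components $p_M,0_M,+_M,\ell_M,c_M$ of the tangent structure of $\X$, with $\overline{T}_2(M,x_0,x_1):=(T_2(M),\,x_0\langle 0_M,0_M\rangle,\,w)$ for $w$ the vector field induced on the pullback $T_2(M)$. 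The one thing to verify here is that each of these $\X$-maps really is a morphism of dynamical systems: for $\overline{p}$ one needs $x_0 0_M p_M=x_0$ and $T(x_1)c_M T(p_M)=p_M x_1$ (the latter by $c_M T(p_M)=p_{T(M)}$ and naturality of $p$); for $\overline{0}$ the point is $x_0 0_M$ by definition and one needs $x_1 T(0_M)=0_M T(x_1)c_M$ (by naturality of $0$ at $x_1$ and the coherence $0_{T(M)}c_M=T(0_M)$); and similarly, and routinely, for $\overline{+},\overline{\ell},\overline{c}$.

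For the Cartesian structure, the terminal object is $(1,1_1,0_1)$ — the only dynamical system structure on $1$, since $p_1$ is an isomorphism — and the product of $(M,x_0,x_1)$ and $(N,y_0,y_1)$ is $(M\times N,\,\langle x_0,y_0\rangle,\,z_1)$, where $z_1$ is the unique map with $z_1 T(\pi_0)=\pi_0 x_1$ and $z_1 T(\pi_1)=\pi_1 y_1$ (using that $T$ preserves $M\times N$); the projections and any pairing of morphisms are then again morphisms of dynamical systems. More generally, $U$ creates any limit of a diagram in ${\sf Dyn}(\X)$ whose underlying limit in $\X$ is preserved by $T$: the cone of points induces a point on the limit, and, since $T$ preserves the limit, the cone of vector fields induces a unique vector field on it, which is sectional by naturality of $p$. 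In particular $U$ creates the pullbacks $T_n(M)$, hence the square witnessing the universality of the vertical lift, together with its preservation by $\overline{T}^n$. All the remaining tangent-category axioms (the naturality and coherence equations for $p,0,+,\ell,c$ and the additive bundle axioms) are equations between morphisms of ${\sf Dyn}(\X)$ whose underlying $\X$-maps are exactly the corresponding data in $\X$; since $U$ is faithful and preserves all the data strictly, each such equation holds in ${\sf Dyn}(\X)$ because it holds in $\X$. Finally $\overline{T}$ preserves the terminal object and binary products because $T$ does, the comparison isomorphisms (e.g.\ $T(M\times N)\cong T(M)\times T(N)$) being morphisms of dynamical systems by naturality; so ${\sf Dyn}(\X)$ is a Cartesian tangent category.

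The main obstacle is not the equation-chasing, which is forced by faithfulness of $U$, but pinning down the correct structure on $\overline{T}(M,x_0,x_1)$ and on the relevant limits: one must see that the vector field $T(x_1)c_M$ (rather than merely $T(x_1)$) and the point $x_0 0_M$ are exactly what make $\overline{p},\overline{0},\overline{+},\overline{\ell},\overline{c}$ simultaneously into morphisms of dynamical systems. This rests entirely on the coherences governing the interaction of $c$ with $0$ and $p$ in $\X$; it is these, rather than anything special to dynamical systems, that do the real work.
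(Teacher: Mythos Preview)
Your proposal is correct and takes essentially the same approach as the paper: define the tangent functor and all the structural transformations pointwise along the forgetful functor, verify that each component is a morphism of dynamical systems, and observe that the equational and limit axioms are then inherited from $\X$. The only cosmetic difference is that the paper writes the base point of $\overline{T}(M,x_0,x_1)$ as $T(x_0)$ (identifying $T(1)=1$ in a Cartesian tangent category) rather than your equivalent $x_0\,0_M$; your treatment is in fact considerably more detailed than the paper's sketch.
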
  

\proof The tangent functor is given by $T(M,x_0,x_1) := (T(M),T(x_0),T(x_1)c)$ (recall $T(1)$ is a final object as the category in Cartesian and so, without loss of generality, we may assume $1=T(1)$) 
and sends a morphism $f:(M,x_0,x_1) \to (N,y_0,y_1)$ to $T(f)$.  $T(f)$ is a morphism of dynamical systems since
$$T(f)T(y_1)c = T(fy_1)c = T(x_1T(f))c =T(x_1)cT^2(f)$$
and is functorial since $T$ is.  

Next we verify that we can define the projection, zero, addition, lift, and canonical flip pointwise:
\begin{eqnarray*} 
p & := & p: (T(M),T(x_0),T(x_1)c) \to (M,x_0,x_1) \\
0 &:= & 0: (M,x_0,x_1) \to (T(M),T(x_0),T(x_1)c) \\
+ & := &  +: (T_2(M),T_2(x_0),T_2(x_1) (c \x_M c)) \to (T(M),T(x_0),T(x_1)c) \\
{\sf l} & := & \ell: (T(M),T(x_0),T(x_1)c) \to (T^2(M),T^2(x_0),T(T(x_1)c)c) \\
{\sf c} & := & c: (T^2(M),T^2(x_0),T(T(x_1)c)c) \to (T^2(M),T^2(x_0),T(T(x_1)c)c) 
\end{eqnarray*}
For example, to show that the pointwise defined lift is well-defined we use the following commuting diagram:
$$\xymatrix{1 \ar[rr]^{T(x_0)} \ar[drr]_{T^2(x_0)} & & T(M) \ar[rr]^{T(x_1)} \ar[d]_{\ell} & & T^2(M) \ar[d]^{\ell} \ar[rr]^c && T^2(M) \ar[d]^{T(\ell)} \\
                     & & T^2(M) \ar[rr]_{T^2(x_1)} & & T^3(M) \ar[rr]_{T(c)c} & & T^3(M)}$$
and to check the others are well-define is done in a similar style.   Finally, because the structure is defined pointwise all the coherence and limit requirements are inherited.
\endproof

The vector field $x_1$ of a dynamical system $(M,x_0,x_1)$ may be regarded as determining a ``differential equation'' where $x_1$ indicates the desired derivative and $x_0$ determines  
the ``initial conditions''.   A solution can then be viewed as a certain homomorphism of dynamical systems from a special fixed dynamical system $(C,c_0,c_1)$ to the specified system $(M,x_0,x_1)$.  

For example, in the category of smooth manifolds, consider $C = \mathbb{R}$ with $c_0 = 0$, and $c_1 = \<i,1_{\mathbb{R}}\>$, where $i$ is the multiplicative unit in $\mathbb{R}$.  Suppose also that we have an arbitrary dynamical system $(M,x_0,x_1)$; for simplicity let us consider the case when $M = \mathbb{R}$ also.  Note that in this case, since $T(M) = T(\mathbb{R}) = \mathbb{R} \times \mathbb{R}$, then to give a vector field $x_1: M \to T(M)$ is equivalent to simply giving a smooth map $f: \mathbb{R} \to \mathbb{R}$.  Then a homomorphism of these dynamical systems consists of a map
	\[ \gamma: (C,c_0,c_1) \to (M,x_0,x_1) \]
with the properties that $\gamma(c_0) = x_0$ and $c_1T(\gamma) = \gamma x_1$.  But for this second condition, since these maps are into a product $(\mathbb{R} \times \mathbb{R})$ their equality is determined by their equality on each component.  Since	
	\[ c_1T(\gamma)p = c_1p\gamma = \gamma = \gamma x_1  \]
their equality on the second component (which has projection $p$) is guaranteed.  The equality on the second component says that
	\[ c_1T(\gamma)\hat{p} = \gamma x_1 \hat{p} = \gamma f \]
But the term $c_1T(\gamma)\hat{p}$ is simply the derivative of $\gamma$, so in ordinary calculus terms this condition simply asks that for any $x \in \mathbb{R}$
	\[ \gamma'(x) = f(\gamma(x)). \]
In other words, asking for a homomorphism of of these dynamical systems is equivalent to asking for a solution to the first order ordinary differential equation $\gamma' = f(\gamma)$, $\gamma(0) = x_0$.  

It is also well-known that solutions to ordinary differential equations are unique \cite[Theorem IV.1.3]{lang}.  Thus, in an arbitrary tangent category we may hope to ask for the following:


\begin{definition} \label{total-curve-object}
A \textbf{total curve object}, $(C,c_0,c_1)$ is a dynamical system which is initial in the category of dynamical systems.   That is, for each dynamical system $(M,x_0,x_1)$, there is a unique 
homomorphism $\gamma$ such that:
$$\xymatrix{1 \ar[r]^{c_0} \ar[dr]_{x_0} & C \ar@{..>}[d]^\gamma  \ar[rr]^{c_1} & & T(C) \ar@{..>}[d]^{T(\gamma)}  \\ & M \ar[rr]_{x_1} && T(M)}$$
\end{definition}

As described above, we then think of the unique map $\gamma: C \to M$ as a solution to the differential equation determined by the dynamical system.   An example of such a total curve object 
is given by $(D_\infty,0, x \mapsto \lambda d.d+x)$ in the category of microlinear spaces of a model of SDG  (see \cite[Theorem 2.4]{kockReyesSolutionsODEs}). 

However, having a total curve object is often too much to expect in less powerful tangent categories: such an object does not exist in the category of smooth manifolds!  In particular, the dynamical system $(\mathbb{R},c_0,c_1)$ as described above is not a total curve object.  While any two solutions that exist are unique, an arbitrary dynamical system/first order differential equation need not have a total solution.  For example, on $\mathbb{R}$, taking the vector field $x_1(x) = \<-x^2,x\>$ with initial condition $\gamma(0) = 1$ has no total solution $\gamma: \mathbb{R} \to \mathbb{R}$ (eg., see \cite[pgs. 137--138]{spivak1}) -- although it does have a {\em partial\/} solution $\gamma(x) = \frac{-1}{x-1}$.

What is true, however, is that \emph{linear} systems always have total solutions \cite[Proposition IV.1.9]{lang}.  Let us describe linear systems in our setting:


\begin{definition} Say that a dynamical system $(M,x_0,x_1)$ is \textbf{linear} in case the vector field $x_1$ is part of a 
linear morphism of differential bundles.  That is, there is a differential bundle $m: M \to B$ and a map $b: B \to TM$ such that the following diagrams commute:
$$\xymatrix{& T(M) \ar[rr]^{T(x_1)} & & T^2(M) \\ M \ar[ur]^\lambda \ar[d]_m \ar[rr]^{x_1} & & T(M) \ar[ur]_{T(\lambda) c}  \ar[d]^{T(m)} \\ B \ar[rr]_b & & T(B)}$$
\end{definition}
This allows us the following useful weakening of the notion of a total curve object:

\begin{definition} \label{curve-object}
A dynamical system $(C,c_0,c_1)$ is a {\bf curve object} in case 
\begin{itemize}
\item It is preinitial in the category of dynamical systems (that is, any two homorphisms from it are equal);
\item It has homorphisms to all linear dynamical systems.  
\end{itemize}
\end{definition}

Thus, in the category of smooth manifolds, $\mathbb{R}$ with $c_0 = 0$ and $c_1 = \<i,1_{\mathbb{R}}\>$ is such a curve object; in fact, more generally any interval $C = (a,b)$ around $0$ in $\mathbb{R}$ with $c_0 = 0$ and $c_1 = \<i,1_{C}\>$ is an example of such an object.

With an assumption of such an object, we can now show completely formally how connections give rise to parallel transport.

\begin{theorem}(Parallel transport)
Suppose $(C,c_1,c_0)$ is a curve object in a Cartesian tangent category, and $\sf q$ is a differential bundle with connection $(K,H)$.  Suppose that $\gamma: C \to M$ is a map such that the pullback of $q$ along $\gamma$ exists and is preserved by $T$, and $e_0: 1 \to E$ is a point of $E$ such that $e_0q = c_0 \gamma$.  Then there exists a unique map $\hat{\gamma}: C \to E$ such that:
\begin{enumerate}[(i)]
	\item \textbf{$\hat{\gamma}$ starts at $e_0$}: that is, $c_0\hat{\gamma} = e_0$;
	\item \textbf{$\hat{\gamma}$ is above $\gamma$}: that is, $\hat{\gamma}q = \gamma$;
	\item \textbf{$\hat{\gamma}$ is parallel}: that is, $\nabla_K(c_1,\hat{\gamma}) = \gamma 0_{\sf q}$, i.e., the following diagram commutes:
	\[
	\bfig
		\node A(0,0)[C]
		\node B(250,250)[T(C)]
		\node C(800,250)[T(E)]
		\node E(1050,0)[E]
		\node M(525,-250)[M]
		\arrow[A`B;c_1]
		\arrow[B`C;T(\hat{\gamma})]
		\arrow[C`E;K]
		\arrow[A`M;\gamma]
		\arrow[M`E;0_{\sf q}]
	\efig
	\]
	
\end{enumerate}
\end{theorem}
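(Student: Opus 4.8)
The plan is to transport the entire setup along $\gamma$ and then invoke the universal property of the curve object. Pulling ${\sf q}$ back along $\gamma$ gives a differential bundle $\gamma^*({\sf q})$ over $C$ which, by Proposition \ref{pullbackHL}, carries the horizontal connection $\gamma^*H$; the hypothesis $e_0 q = c_0\gamma$ lets us form the point $\bar e_0 := \langle c_0, e_0\rangle : 1 \to \gamma^*(E)$. On $\gamma^*(E)$ I put the vector field obtained by horizontally lifting the pulled-back field $c_1$,
\[ x_1 := \langle \gamma^*(q)\,c_1,\ 1_{\gamma^*(E)}\rangle\,(\gamma^*H) : \gamma^*(E) \to T(\gamma^*(E)). \]
Since $\gamma^*H$ is a section of the horizontal descent, $x_1 p = 1$ and $x_1 T(\gamma^*(q)) = \gamma^*(q)\,c_1$, so $(\gamma^*(E),\bar e_0, x_1)$ is a dynamical system and $(x_1,c_1)$ is at least a bundle morphism $\gamma^*({\sf q}) \to T(\gamma^*({\sf q}))$. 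The crucial point is that $(\gamma^*(E),\bar e_0, x_1)$ is in fact a \emph{linear} dynamical system, with differential bundle $\gamma^*(q)$ and base vector field $c_1$.

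To get the linearity I will exhibit $x_1$ as a composite of linear bundle morphisms,
\[ \gamma^*({\sf q}) \to^{(\langle \gamma^*(q)c_1,1\rangle,\, c_1)} p^*(\gamma^*({\sf q})) \to^{(\gamma^*H,\, 1_{T(C)})} T(\gamma^*({\sf q})), \]
and conclude by Proposition \ref{constructingLinearMorphisms}(i). The second morphism is linear by the second bullet of Definition \ref{defnHorizConnection} applied to $\gamma^*H$. For the first, one checks that $\langle \gamma^*(q)c_1,1\rangle$ preserves the lift by post-composing with the two projections of $T(p^*(\gamma^*(E)))$, using the pullback formula $\lambda_{\gamma^*({\sf q})}T(\gamma^*(q)) = \gamma^*(q)\,0$ and naturality of $0$ — a short computation. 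I expect this to be the main obstacle, since it is exactly what lets the curve object be applied; the rest is bookkeeping with the pullback formulas for $\gamma^*H$ and with the connection identities.

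For existence, since $(C,c_0,c_1)$ is a curve object it has a homomorphism $\tilde\gamma : (C,c_0,c_1) \to (\gamma^*(E), \bar e_0, x_1)$, so $c_0\tilde\gamma = \bar e_0$ and $c_1 T(\tilde\gamma) = \tilde\gamma x_1$. The composite $\tilde\gamma\,\gamma^*(q)$ is then an endomorphism of the dynamical system $(C,c_0,c_1)$ (using $\bar e_0\,\gamma^*(q) = c_0$ and $x_1 T(\gamma^*(q)) = \gamma^*(q)c_1$), so preinitiality forces $\tilde\gamma\,\gamma^*(q) = 1_C$; hence $\tilde\gamma$ is a section of $\gamma^*(q)$. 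Set $\hat\gamma := \tilde\gamma\,\gamma^*_E : C \to E$. Then (i) $c_0\hat\gamma = \bar e_0\,\gamma^*_E = e_0$ and (ii) $\hat\gamma q = \tilde\gamma\,\gamma^*(q)\,\gamma = \gamma$ are immediate. For (iii), the explicit form of $\gamma^*H$ from Proposition \ref{pullbackHL} gives $x_1 T(\gamma^*_E) = \langle \gamma^*(q)\,c_1 T(\gamma),\ \gamma^*_E\rangle H$, and hence
\[ \nabla_K(c_1,\hat\gamma) = c_1 T(\tilde\gamma)T(\gamma^*_E)K = \tilde\gamma\, x_1 T(\gamma^*_E)K = \langle c_1 T(\gamma),\hat\gamma\rangle HK = \langle c_1 T(\gamma),\hat\gamma\rangle \pi_1 q\zq = \hat\gamma q\zq = \gamma\zq, \]
using $c_1 T(\tilde\gamma) = \tilde\gamma x_1$ and the connection identity $HK = \pi_1 q\zq$; since $\zq = 0_{\sf q}$ this is (iii).

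For uniqueness, let $\hat\gamma'$ also satisfy (i)--(iii). As $\hat\gamma' q = \gamma$, it lifts to $\tilde\gamma' := \langle 1_C, \hat\gamma'\rangle : C \to \gamma^*(E)$, a section of $\gamma^*(q)$ with $\tilde\gamma'\gamma^*_E = \hat\gamma'$, and it suffices (by preinitiality) to show $\tilde\gamma'$ is a homomorphism into $(\gamma^*(E),\bar e_0, x_1)$, since then $\tilde\gamma' = \tilde\gamma$ and so $\hat\gamma' = \hat\gamma$. The point condition $c_0\tilde\gamma' = \bar e_0$ is (i). For $c_1 T(\tilde\gamma') = \tilde\gamma' x_1$, composing with the two projections of the pullback $T(\gamma^*(E))$ reduces the claim to the single identity $c_1 T(\hat\gamma') = \langle c_1 T(\gamma), \hat\gamma'\rangle H$ of maps $C \to T(E)$; and by Proposition \ref{connectionGivesPullback}, $T(E)$ is the wide pullback of $E\to^q M \from^p T(M)$ and $E\to^q M$ with legs $K$, $T(q)$, $p$, so it is enough to compare the two sides after composing with $K$ (which uses (iii) and $HK=\pi_1 q\zq$), with $T(q)$ (which uses $HT(q)=\pi_0$), and with $p$ (which uses $Hp=\pi_1$). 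All three match, which completes the argument.
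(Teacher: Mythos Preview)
Your proof is correct and follows the same overall architecture as the paper's: pull back along $\gamma$, equip $\gamma^*(E)$ with a linear vector field built from the horizontal connection, solve via the curve object, use preinitiality to force the solution to be a section of $\gamma^*(q)$, and project to $E$. Your $x_1$ is exactly the paper's $F$, just packaged through $\gamma^*H$ rather than written out in components.

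There are two places where your execution differs from the paper's. For linearity of $x_1$, the paper computes $F\,T(0\times\lambda)\,c = (0\times\lambda)\,T(F)$ directly, whereas you factor $x_1$ as a composite of linear bundle morphisms; your argument is shorter and makes clearer why linearity holds (it is inherited from the second linearity axiom on $\gamma^*H$). For uniqueness, the paper verifies $c_1T(\beta') = \beta'F$ by expanding $c_1T(\gamma')$ through the connection identity $(K\lambda \mathbin{+_{\sf q1}} p0) + UH = 1$, while you instead reduce to $c_1T(\hat\gamma') = \langle c_1T(\gamma),\hat\gamma'\rangle H$ and check it against the three legs $K$, $T(q)$, $p$ of the limit in Proposition~\ref{connectionGivesPullback}. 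Both arguments ultimately rest on the same connection axioms, but yours trades the additive bookkeeping for an appeal to the limit property already established, which is arguably cleaner.
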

\begin{proof}
The key idea is to use the universal property of $C$ not on the object $E$, but on the pullback of $E$ along $\gamma$:
\[
\bfig
	\square<500,350>[C \times_M E`E`C`M;\pi_1`\pi_0`q`\gamma]
\efig
\]
in other words, the pullback bundle $\gamma^*({\sf q})$.  We define a map $F: C \times_M E \to T(C \times_M E)$ by 
	\[ F := \<\pi_0c_1,\<\pi_0c_1T(\gamma),\pi_1\>H\>. \]
We first claim that this is a well-defined linear vector field on $\gamma^*({\sf q})$.  First, note that $\<\pi_0c_1T(\gamma),\pi_1\>$ is well-defined since
	\[ \pi_0 c_1T(\gamma)p = \pi_0 c_1p \gamma = \pi_0 \gamma = \pi_1 q \]
by definition of the pullback $C \times_M E$.  

Next, we need to show that $F$ itself is well-defined.  $F$ is going into $T(C \times_M E)$, so we need to check the two components of $F$ are equal when post-composed by $T(q)$ and $T(\gamma)$:
	\[ \<\pi_0 c_1T(\gamma),\pi_1\>HT(q) = \<\pi_0c_1T(\gamma),\pi_1\>\pi_0 = \pi_0c_1T(\gamma) \]
as required.

Next, we want to show that $F$ is a vector field.  Indeed,
\begin{eqnarray*}
Fp & = & \<\pi_0c_1,\<\pi_0c_1T(\gamma),\pi_1\>H\>p \\
& = & \<\pi_0c_1p,\<\pi_0c_1T(\gamma),\pi_1\>Hp\> \mbox{ (naturality of $p$)} \\
& = & \<\pi_0,\<\pi_0c_1T(\gamma),\pi_1\>\pi_1\> \mbox{ ($c_1$ vector field, definition of $H$)} \\
& = & \<\pi_0,\pi_1\> \\
& = & 1_{C \times_M E} 
\end{eqnarray*}
as required.  

Finally, we want to show that $F$ is linear as a map between the differential bundles $\gamma^*({\sf q})$ and $T(\gamma^*({\sf q}))$.  The base map $b$ we can clearly take as $\<\pi_0c_1T(\gamma),\pi_1\>H$.  For the linearity condition we need to show that 
	\[ F T(0 \times \lambda)c = (0 \times \lambda)T(F). \]
Thus, consider
\begin{eqnarray*}
FT(0 \times \lambda)c 
& = & \<\pi_0c_1,\pi_0c_1T(\gamma),\pi_1\>H\> (T(0)c \times T(\lambda)c) \mbox{ (naturality of $c$)}\\
& = & \<\pi_0c_1,\pi_0c_1T(\gamma),\pi_1\>H\>(0 \times T(\lambda)c) \mbox{ (coherence of $c$)} \\
& = & \<\pi_0c_10,\pi_0c_1T(\gamma),\pi_1\>HT(\lambda)c\> \\
& = & \<\pi_0c_10,\pi_0c_1T(\gamma),\pi_1\>(0 \times \lambda)T(H)\> \mbox{ (linearity of $H$)} \\
& = & \<\pi_00T(c_1),\pi_00T(c_1)T^2(\gamma),\pi_1\lambda\>T(H)\> \mbox{ (naturality of $0$)} \\
& = & (0 \times \lambda)\<T(\pi_0)T(c_1),\<T(\pi_0)T(c_1)T^2(\gamma),T(\pi_1)\>T(H)\> \\
& = & (0 \times \lambda)T(F)
\end{eqnarray*}
as required.  

Thus, by the universal property of the curve object $C$, there exists a unique map $\beta: C \to ~C \times_M E$ such that the diagrams
\[
\bfig
	\node TC(0,300)[T(C)]
	\node TE(650,300)[T(C \times_M E)]
	\node I(0,-300)[1]
	\node C(0,0)[C]
	\node E(650,0)[C \times_M E]
	\arrow|l|[C`TC;c_1]
	\arrow|a|[TC`TE;T(\beta)]
	\arrow|a|[C`E;\beta]
	\arrow|r|[E`TE;F]
	\arrow|l|[I`C;c_0]
	\arrow|b|[I`E;\<c_0,e_0\>]
\efig
\]
commute.  We now claim that the composite $\hat{\gamma} := \beta\pi_1: C \to E$ is the unique map satisfying conditions (i), (ii), (iii) in the statement of the theorem.  

For (i), 
	\[ c_0\hat{\gamma} = c_0 \beta \pi_1 = \<c_0,e_0\>\pi_1 = e_0. \]
For (ii), we first want to show that $\beta \pi_0 = 1_C$.  For this, consider the following diagram:
\[
\bfig
	\node TC(0,300)[T(C)]
	\node TE(650,300)[T(C \times_M E)]
	\node I(0,-300)[1]
	\node C(0,0)[C]
	\node E(650,0)[C \times_M E]
	\node C2(1300,0)[C]
	\node TC2(1300,300)[T(C)]
	\arrow|l|[C`TC;c_1]
	\arrow|a|[TC`TE;T(\beta)]
	\arrow|a|[C`E;\beta]
	\arrow|r|[E`TE;F]
	\arrow|l|[I`C;c_0]
	\arrow|a|[I`E;\<c_0,e_0\>]
	\arrow|a|[TE`TC2;T(\pi_0)]
	\arrow|a|[E`C2;\pi_0]
	\arrow|b|/{@{>}@/_1em/}/[I`C2;c_0]
	\arrow|r|[C2`TC2;c_1]
\efig
\]
The diagram commutes since the two left regions are the definition of $\beta$, the top right region commutes by definition of $F$, and the bottom right region is immediate.  Thus, $\beta \pi_0$ is a solution to the system $(c_1,c_0)$.  Note, however, that $1_C$ also solves this system.  Thus, by the uniqueness aspect of the curve object $C$, we must have $\beta \pi_0 = 1_C$.  To show (ii) is now immediate since
	\[ \hat{\gamma}q = \beta \pi_1 q = \beta \pi_0 \gamma = \gamma. \]

For (iii), consider
\begin{eqnarray*}
c_1T(\hat{\gamma})K 
& = & c_1T(\beta\pi_1)K \\
& = & c_1T(\beta)T(\pi_1)K \\
& = & \beta F T(\pi_1)K \mbox{ (by definition of $\beta$)} \\
& = & \beta \<\pi_0 c_1T(\gamma),\pi_1\>HK \mbox{ (by definition of $F$)} \\
& = & \beta \<\pi_0 c_1T(\gamma),\pi_1\> \pi_1 q 0_{\sf q} \mbox{ (by definition of a connection)} \\
& = & \beta \pi_1 q 0_{\sf q} \\
& = & \hat{\gamma} q 0_{\sf q} \\
& = & \gamma 0_{\sf q} \mbox{ (by (ii), proven above)} 
\end{eqnarray*}
	
Finally, we want to show that $\hat{\gamma}$ is the unique map with these properties.  Thus, suppose we have some $\gamma': C \to E$ satisfying (i), (ii), (iii).  Define $\beta' := \<1,\gamma'\>$.  If we can show $\beta'$ also solves the system $(F,\<c_0,e_0\>)$, that is, that
\[
\bfig
	\node TC(0,300)[T(C)]
	\node TE(650,300)[T(C \times_M E)]
	\node I(0,-300)[1]
	\node C(0,0)[C]
	\node E(650,0)[C \times_M E]
	\arrow|l|[C`TC;c_1]
	\arrow|a|[TC`TE;T(\beta')]
	\arrow|a|[C`E;\beta']
	\arrow|r|[E`TE;F]
	\arrow|l|[I`C;c_0]
	\arrow|b|[I`E;\<c_0,e_0\>]
\efig
\]
commutes, then the result will follow by the uniqueness of solutions since we would then have $\beta' = \beta$ and hence
	\[ \gamma' = \beta' \pi_1 = \beta \pi_1 = \hat{\gamma} \]
as required.

For the bottom triangle,
	\[ c_0 \beta' = c_0 \<1,\gamma'\> = \<c_0,e_0\> \]
by assumption on $\gamma'$.  

To show the square commutes, we are considering two maps into the pullback $T(C \times_M E)$, and hence it suffices to show the maps are equal when post-composed by $T(\pi_0)$ and $T(\pi_1)$.  For $T(\pi_0)$, 
	\[ c_1T(\beta')T(\pi_0) = c_1T(\beta'\pi_0) = c_1 \]
while
	\[ \beta' F T(\pi_0) = \beta ' \pi_0 c_1 = c_1 \]

For $T(\pi_1)$, consider
\begin{eqnarray*}
&   & c_1T(\beta')T(\pi_1) \\
& = & c_1T(\beta '\pi_1) \\
& = & c_1T(\gamma') \mbox{ (definition of $\beta$)} \\
& = & c_1T(\gamma')(\<K \lambda,p0\>T(+_{\sf q}) +_0 \<T(q),p\>H) \mbox{ (by definition of a connection)} \\
& = & \<c_1T(\gamma')K\lambda,c_1T(\gamma')p0\>T(+_{\sf q}) + \<c_1T(\gamma')T(q),c_1T(\gamma')p\>H \\
& = & \<\gamma 0_{\sf q}\lambda,c_1T(\gamma')p0\>T(+_{\sf q}) + \<c_1T(\gamma'q),c_1T(\gamma')p\>H \mbox{ (using (iii) for $\gamma'$)} \\
& = & \<\gamma 0_{\sf q}\lambda,c_1p\gamma'0)\>T(+_{\sf q}) + \<c_1T(\gamma),c_1T(\gamma')p\>H \mbox{ (using (ii) for $\gamma'$ and naturality of $p$)} \\
& = & \<\gamma 0T(0_{\sf q}),c_1p\gamma'0)\>T(+_{\sf q}) + \<c_1T(\gamma),c_1p\gamma'\>H \mbox{ (coherence of $\lambda$ and naturality of $p$)} \\
& = & \<\gamma 0T(0_{\sf q}),\gamma'0)\>T(+_{\sf q}) + \<c_1T(\gamma),\gamma'\>H \mbox{ ($c_1$ a vector field)} \\
& = & \gamma'0 + \<c_1T(\gamma),\gamma'\>H \mbox{ (adding an additive unit)} \\
& = & \<c_1T(\gamma),\gamma'\>H \mbox{ (adding an additive unit)} 
\end{eqnarray*}
while
	\[ \beta' FT(\pi_1) = \<1,\gamma'\>\<\pi_0c_1T(\gamma),\pi_1\>H = \<c_1T(\gamma),\gamma'\>H. \]
Thus $c_1T(\beta') = \beta' F$, and hence the diagram above commutes and the result is proven.  
\end{proof}

\subsection{Affine connections}

In this final section we look at two results about connections on a tangent bundle, that is, on affine connections.  

Define the twist map $\tau:= \<\pi_1,\pi_0\>:T_2(M) \to T_2(M)$.

\begin{lemma}\label{lemmaTau}
In any tangent category, $cU = U \tau$.  
\end{lemma}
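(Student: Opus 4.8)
The plan is to verify the identity by post-composing both sides with the two projections $\pi_0,\pi_1\colon T_2(M)\to T(M)$ of the pullback $T_2(M)=T(M)\x_M T(M)$; since $T_2(M)$ is a limit, a map into it is determined by these two composites. Here $U=\<T(p_M),p_{T(M)}\>\colon T^2(M)\to T_2(M)$ is the horizontal descent of the tangent bundle ${\sf p}_M$ and $c=c_M\colon T^2(M)\to T^2(M)$, so by definition $U\pi_0=T(p_M)$ and $U\pi_1=p_{T(M)}$, while $\tau=\<\pi_1,\pi_0\>$ gives $\tau\pi_0=\pi_1$ and $\tau\pi_1=\pi_0$. (The twist $\tau$ is well defined because the defining equation $\pi_0 p_M=\pi_1 p_M$ of $T_2(M)$ is symmetric in its two projections, and $cU$ is well defined as a plain composite.)

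First I would recall the two coherences relating the canonical flip to the projections of $T^2(M)$, namely $cT(p_M)=p_{T(M)}$ and $cp_{T(M)}=T(p_M)$; the first is one of the coherence axioms for $c$ in a tangent category (it is the identity $cT(p)=p$ used, for instance, in the proof of Proposition~\ref{examplesLinearMorphisms}.ii), and the second follows from it together with $cc=1$. Then the computation is immediate. Post-composing with $\pi_0$,
\[ cU\pi_0 = cT(p_M) = p_{T(M)} = U\pi_1 = U\tau\pi_0, \]
and post-composing with $\pi_1$,
\[ cU\pi_1 = cp_{T(M)} = T(p_M) = U\pi_0 = U\tau\pi_1. \]
Since the composites of $cU$ and of $U\tau$ with $\pi_0$ and $\pi_1$ agree, the universal property of the pullback $T_2(M)$ yields $cU=U\tau$.

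There is essentially no obstacle here: the result is a direct unwinding of the definitions of $U$ and $\tau$ together with the coherence laws for $c$. The only points worth keeping straight are which projection of $T_2(M)$ corresponds under $U$ to which of the two maps $T(p_M)$, $p_{T(M)}$, and that the twist $\tau$ is exactly what interchanges them; the flip $c$ interchanges $T(p_M)$ and $p_{T(M)}$, which is precisely why conjugating $U$ by $c$ amounts to twisting by $\tau$.
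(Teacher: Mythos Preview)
Your proof is correct and is essentially the same as the paper's: both arguments reduce to the coherences $cT(p)=p$ and $cp=T(p)$ together with the universal property of the pullback $T_2(M)$. The paper just writes this more compactly as the single line $cU = c\<T(p),p\> = \<cT(p),cp\> = \<p,T(p)\> = U\tau$, whereas you verify the two projections separately.
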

\begin{proof}
\[ cU = c\<T(p),p\> = \<cT(p),cp\> = \<p,T(p)\> = U \tau. \]
\end{proof}

For affine connections $(K,H)$ one can give an alternative characterization of the torsion-free condition in terms of $H$.    

\begin{proposition}
In any tangent category, an affine connection $(K,H)$ is torsion-free if and only if $H c = \tau H$.
\end{proposition}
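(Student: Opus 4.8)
The plan is to prove the equivalence by relating the torsion-free condition on $K$ (namely $cK = K$) to the condition $Hc = \tau H$ via the fundamental decomposition of $T(E) = T^2(M)$ supplied by the connection, i.e., Proposition \ref{connectionGivesPullback}. Recall that for an affine connection $q = p_M$, we have $E = T(M)$, so $K: T^2(M) \to T(M)$, $H: T_2(M) \to T^2(M)$ (since $T(M) \times_M T(M) = T_2(M)$), and $U = \langle T(p), p\rangle: T^2(M) \to T_2(M)$. The key players are the three projections $K, T(p), p$ out of $T^2(M)$; and the twist $\tau$ on $T_2(M)$ satisfies $cU = U\tau$ by Lemma \ref{lemmaTau}.

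First I would establish the ``easy'' direction: assume $Hc = \tau H$ and show $cK = K$. Both $cK$ and $K$ are maps $T^2(M) \to T(M)$. I would verify $cK = K$ by checking that $cK$ satisfies the same universal property as $K$ relative to the limit diagram of Proposition \ref{connectionGivesPullback} — that is, compute $(cK)$ composed with... wait, that's the wrong direction. Instead, since $T^2(M)$ is the pullback with projections $K, T(p), p$, it suffices to show $c$ acts compatibly; more directly, I would use the canonical decomposition $1_{T^2(M)} = \langle K, p\rangle \mu + UH$ (the second connection axiom, here with $q = p_M$). Precompose with $c$ on the left: $c = c\langle K,p\rangle\mu + cUH = \langle cK, cp\rangle \mu + U\tau H$. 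Now $cp = T(p)$ (a tangent category coherence), and if $Hc = \tau H$ then $\tau H = Hc$, giving $c = \langle cK, T(p)\rangle \mu + UHc$. Comparing with $c = c\cdot 1 = c(\langle K,p\rangle\mu + UH) = \langle cK, cp\rangle\mu + cUH$ — I need to be careful here; the cleanest route is to postcompose the decomposition of $1$ with $c$ and also left-compose, then extract the $K$-component using Lemma \ref{lemmaVConnector} ($\mu K = \pi_0$) and the connection axiom $HK = \pi_0 q\zq = \pi_1 0$ (which for $q = p_M$ reads $H K = \pi_1 p 0$ or similar). Applying $K$ to $c = \langle cK, T(p)\rangle\mu + U\tau H$ and using $\mu K = \pi_0$, $(UH)K = \ldots$ should isolate $cK$ in terms of $K$.

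For the converse, assume $cK = K$ and derive $Hc = \tau H$. Both sides are maps $T_2(M) \to T^2(M)$. I would check equality by postcomposing with the three projections $K$, $T(p)$, $p$ of the limit diagram (Proposition \ref{connectionGivesPullback}), since these jointly reflect isomorphisms. For the $p$-component: $Hc p = H T(p) = \pi_0$ (wait — need $cp = T(p)$ and $Hp = \pi_1$, so $Hcp = HT(p)$; meanwhile $\tau H p = \tau \pi_1 = \pi_0$), so I must check $HT(p) = \pi_0$, which follows from $H$ being a section of $U = \langle T(p), p\rangle$. For the $T(p)$-component: $Hc T(p) = H c T(p)$; using $cT(p) = p$ (coherence), this is $Hp = \pi_1$, while $\tau H T(p) = \tau \pi_0 = \pi_1$. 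For the $K$-component — this is the crux — I need $HcK = \tau H K$. Now $HcK = H(cK) = HK = \pi_1 p 0$ (or whatever $HK = \pi_0 q \zq$ unwinds to for $q = p_M$; note $q\zq = p_M 0_M$) by the connection's zero axiom and the assumption $cK = K$. Wait — I need $HcK$, and $cK = K$ gives $HcK = H \cdot (cK)$; but $c$ acts on $T^2(M)$, the domain of $K$... Actually $Hc: T_2(M) \to T^2(M)$ then $K: T^2(M) \to T(M)$, so $HcK$ — here $cK$ is genuinely the composite $c$ then $K$, both on/from $T^2(M)$, so $HcK = H(cK) = HK$ by hypothesis. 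And $\tau HK = \tau \cdot \pi_1 p 0$ — I'd compute $\tau \pi_1 = \pi_0$, so $\tau H K = \pi_0 p 0$, and then reconcile $\pi_0 p 0$ with $\pi_1 p 0$ using that on $T_2(M)$, $\pi_0 p = \pi_1 p$ (both projections agree after applying $p$). Thus $HcK = HK = \pi_1 p 0 = \pi_0 p 0 = \tau H K$, completing the check.

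**The main obstacle** I anticipate is bookkeeping the precise form of the connection axioms when specialized to $q = p_M$: the zero condition $HK = \pi_0 q \zq$ becomes $HK = \pi_0 p_M 0_M$, and one must track the identifications $E_2 = T_2(M)$, $\mu$ in this case, and the coherences $cp = T(p)$, $cT(p) = p$ carefully — a sign error or a swapped projection will derail the $K$-component computation, which is where the real content lives. A clean way to organize both directions uniformly is: show that $Hc$ and $\tau H$ (resp. $cK$ and $K$) agree after postcomposing with all three limiting projections $K, T(p), p$, reducing everything to the two coherences above plus the connection axioms $\mu K = \pi_0$ (Lemma \ref{lemmaVConnector}), $HK = \pi_0 q\zq$, and the $U$-section property $HU = 1$, i.e., $HT(p) = \pi_0$ and $Hp = \pi_1$. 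I would present it as: ``Since the diagram of Proposition \ref{connectionGivesPullback} is a limit, it suffices to verify the claimed equation after composing with $K$, $T(p)$, and $p$; the $T(p)$- and $p$-components are immediate from coherences and the section property, and the $K$-component is exactly the torsion-free condition via the zero axiom of a connection.''
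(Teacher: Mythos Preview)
Your converse direction ($cK = K \Rightarrow Hc = \tau H$) is correct and is a genuinely different route from the paper's: you test $Hc$ against $\tau H$ on the three limiting projections $K, T(p), p$ of Proposition~\ref{connectionGivesPullback}, whereas the paper argues by conjugating the connection identity $(K\ell +_1 p0) + UH = 1$ by $c$ and then postcomposing with $H$. Your argument is clean and arguably more transparent; the paper's has the virtue of being symmetric with its treatment of the other implication.

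Your forward direction ($Hc = \tau H \Rightarrow cK = K$), however, has a genuine gap. Precomposing the connection identity with $c$ gives
\[
c \;=\; (cK\ell \; +_1 \; cp0) \;+\; U\tau H,
\]
and after substituting $\tau H = Hc$ and applying $K$ you obtain only $cK = cK + 0 + UHcK$, i.e.\ a tautology: the term $cK$ appears on both sides and nothing forces it to equal $K$. The missing idea is that $c$ must also be applied on the \emph{right} of the identity (postcomposition), so that $c$ swaps the two additions on $T^2(M)$ and converts the $UH$ term back to itself via $cUHc = U\tau\tau H = UH$; only then does applying $K$ isolate $cK$ against $K$ rather than against itself. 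This is exactly what the paper does: it sandwiches the identity as $c(\,\cdot\,)c$, obtaining $(cK\ell + T(p)T(0)) +_1 UH = 1$, and then postcomposes with $K$ to get $cK + 0 + 0 = K$.

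Note also that your proposed ``uniform'' presentation via the limit cannot handle the forward direction as stated: the limit of Proposition~\ref{connectionGivesPullback} lets you test equality of maps \emph{into} $T^2(M)$, but $cK$ and $K$ are maps \emph{out} of it. So for that implication you really do need the algebraic manipulation of the connection identity, with $c$ acting on both ends.
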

\begin{proof} The proof uses ideas drawn from \cite[Proposition 5]{bungeConnections}.

Assume $H c = \tau H$.  Consider
\begin{eqnarray*}
(K \ell +_1 p0) + UH & = & 1 \mbox{ (definition of a connection)} \\
c (K \ell c + p0 c)  +_1 c UH c & = & cc \mbox{ (apply the additive map $c$ to both ends of both equations)} \\
(cK\ell c + cp0c) +_1 cU\tau H & = & 1 \mbox{ (using the assumption and $c^2 = 1$)} \\
(cK\ell + T(p)T(0)) +_1 UH & = & 1 \mbox{ (by coherences of $\ell$ and $c$ and lemma \ref{lemmaTau})} \\
(cK\ell K + T(p)T(0)K) + UHK & = & K \mbox{ (applying $K$ to both sides, $K$ is additive for both additions)} \\
cK + 0 + 0 & = & K \mbox{ ($K$ is additive and by definition $H K = 0$ and $K\ell = 1$)} \\
cK & = & K
\end{eqnarray*}

For the other direction, assume $cK = K$.   Consider
\begin{eqnarray*}
(K \ell +_1 p0) + UH & = & 1 \mbox{ (definition of a connection)} \\
(cK\ell c + cp0c) +_1 cUHc & = & cc \mbox{ (apply the additive map $c$ to both ends of both equations)} \\
(K\ell + T(p)T(0)) +_1 cUHc & = & 1 \mbox{ (using the assumption and $c^2 = 1$)} \\
(HK \ell + HT(p)T(0)) +_1 H U \tau Hc & = & H \mbox{ (applying $H$ to both sides and using lemma \ref{lemmaTau})} \\
((\pi_0p 0 \ell) + \pi_0 T(0)) +_1 \tau Hc & = & H \mbox{ (assumptions on a connection)} \\
\pi_0 T(0) +_1 \tau Hc & = & H \mbox{ (addition of a zero term)} \\
\tau Hc & = & H \mbox{ (addition of a zero term)} \\
Hc & = & \tau H \mbox{ (applying $\tau$ to both sides)}
\end{eqnarray*}
as required.

\end{proof}

Recall from theorem \ref{thmFinslerEquivalence} that any vertical connection $K$ has an associated Finsler descent $R: T(E) \to E_2$.  Then when $(K,H)$ is an affine connection on $M$, $R$ can be composed with $H$.  In this case, we can define an ``almost-complex'' structure on $M$ as follows.  This result is well-known in the category of smooth manifolds (eg., see \cite[definition 7.2.7(b)]{SLK}).  

\begin{proposition}
Let $\X$ be a tangent category with negatives, and $(K,H)$ an affine connection on $M$.  Then 
	\[  F := RH - Uv: T^2(M) \to T^2(M) \]
is an ``almost complex'' map; that is, it has the property that $FF = -1$.  
\end{proposition}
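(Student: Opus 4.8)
The plan is to compute $FF$ by expanding $(RH - U\mu)(RH - U\mu)$ and simplifying each resulting term, working throughout with the differential bundle structure ${\sf p}_E$ on $T(E) = T^2(M)$. (Recall that for an affine connection $E = T(M)$, so $E_2$ and $T(M) \times_M E$ are the same object $T_2(M)$, and $R = \<K,p\>$, $U = \<T(q),p\>$, $\mu$ all go between $T^2(M)$ and $T_2(M)$, so that $RH$, $U\mu$, and $F$ are all endomorphisms of $T^2(M)$.) Since $R$ is the Finsler descent of $K$ (Theorem \ref{thmFinslerEquivalence}) and $H$, $U$, $\mu$ are each linear by Definition \ref{defnHorizConnection}, Lemma \ref{UisLinear}, and Lemma \ref{muLinearity}, the composites $RH$ and $U\mu$ are additive endomorphisms of ${\sf p}_E$; hence composition distributes over the subtraction and I may write
\[ FF = RHRH - U\mu RH - RHU\mu + U\mu U\mu. \]

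First I would dispose of the two cross terms: since $H$ is a section of $U$, $HU = 1$, so $RHU\mu = R\mu$; and since $R$ is a retraction of $\mu$, $\mu R = 1$, so $U\mu RH = UH$. For the two ``square'' terms the key observation is that $HR = \mu U$. Indeed, $HR = \<HK, Hp\> = \<\pi_1 q\zq, \pi_1\>$ using the zero axiom $HK = \pi_1 q\zq$ of a connection together with $Hp = \pi_1$ and $R\pi_0 = K$, $R\pi_1 = p$; while $\mu U = \<\mu T(q), \mu p\> = \<\pi_0 q\zq, \pi_1\>$ using the universality-of-the-lift square and $\mu p = \pi_1$; these agree since $\pi_0 q = \pi_1 q$ in the pullback $E_2$. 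Writing $\epsilon$ for this common endomorphism of $T_2(M)$, we get $RHRH = R\epsilon H$ and $U\mu U\mu = U\epsilon\mu$, and since $R\pi_1 = p = U\pi_1$ a one-line calculation gives $R\epsilon = U\epsilon = \<pq\zq, p\> = p\,\zeta$, where $\zeta := \<q\zq, 1_E\>$ is the zero section of the Finsler bundle $q^*({\sf q})$.

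The proof then finishes by using linearity once more: the bundle morphisms $(H, 1_E): q^*({\sf q}) \to {\sf p}_E$ and $(\mu, 1_E): q^*({\sf q}) \to {\sf p}_E$ are linear, hence additive, hence send the zero section $\zeta$ of $q^*({\sf q})$ to the zero section $0_E$ of ${\sf p}_E$. Therefore $RHRH = p\,\zeta H = p\,0_E$ and $U\mu U\mu = p\,\zeta\mu = p\,0_E$ are both the zero endomorphism of ${\sf p}_E$, and contribute nothing to the sum, leaving $FF = -UH - R\mu = -(R\mu + UH)$. Since $R\mu + UH = 1_{T(E)}$ by the second axiom for a connection (Definition \ref{defnConnection}), this is $-1$, as required.

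I expect the only real difficulty to be bookkeeping rather than insight: one must keep straight the several differential bundle structures carried by $T^2(M)$ and $T_2(M)$ (in particular ${\sf p}_E$, $T({\sf q})$, the Finsler bundle $q^*({\sf q})$, and $0^*(T({\sf q}))$), check that each pairing that appears — such as $\<q\zq, 1_E\>$ or $\<pq\zq, p\>$ — is a legitimate map into the relevant pullback, and verify that the sum in the connection axiom $R\mu + UH = 1$ is taken in ${\sf p}_E$, so that the additivity used both to distribute composition over subtraction and to pass from ``preserves additive structure'' to ``preserves the zero section'' is available. Everything else is the tangent-category coherences and the naturality of $p$ and $0$ already used repeatedly in the preceding sections.
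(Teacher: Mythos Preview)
Your proof is correct and follows essentially the same route as the paper's: expand $(RH - U\mu)(RH - U\mu)$, reduce the cross terms to $UH$ and $R\mu$ via $HU = 1$ and $\mu R = 1$, show the two ``square'' terms are the zero $p0$ of ${\sf p}_E$, and conclude using $R\mu + UH = 1$.

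The one genuine difference is in how $U\mu U\mu$ is handled. The paper computes $\mu U$ explicitly as $\pi_1\<p0,1\>$ and then grinds out $U\mu U\mu = p0$ directly. You instead observe the pleasant identity $HR = \mu U$ (both equal $\<\pi_1 p0,\pi_1\>$ in the affine case), so that $U\mu U\mu = U(\mu U)\mu = U\epsilon\mu$ reduces \emph{exactly} as $RHRH = R\epsilon H$ does, and both collapse via additivity of $H$ and $\mu$ on the zero section $\zeta = \<p0,1\>$ of the Finsler bundle. This is a tidy shortcut: it replaces the paper's explicit six-line computation of $\mu U$ and $U\mu U\mu$ with a symmetry argument, at the cost of needing to check that the two bundle structures $q^*({\sf p}_M)$ and $q^*({\sf q})$ on $T_2(M)$ genuinely coincide in the affine case (they do, since $q = p$ and $\zq = 0$). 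Your treatment of $RHRH$ is identical to the paper's, just phrased more structurally; the paper's line ``$\<pp0,p\>H = p0$ (since $H$ is linear)'' is exactly your ``$p\zeta H = p0_E$ since $H$ is additive''.
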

\begin{proof}
First, consider 
\begin{eqnarray*}
&   & vU \\
& = & \<\pi_0 \ell,\pi_1 0\>T(+)\<T(p),p\> \\
& = & \<\pi_0 \ell,\pi_1 0\> \<T(+p),T(+)p\> \\
& = & \<\pi_0 \ell,\pi_1 0\> \<T(\pi_1)T(p),p+ \> \\
& = & \<\pi_1 0T(p),\<\pi_0 \ell p,\pi_1\>+\> \\
& = & \<\pi_1 p0, \<\pi_0 p 0,\pi_1 \>+\> \\
& = & \<\pi_1 p0, \pi_1\> \\
& = & \pi_1 \<p0,1\>
\end{eqnarray*}
Thus
\begin{eqnarray*}
&   & UvUv \\
& = & \<T(p),p\>\pi_1\<p0,1\>\<\pi_0 \ell,\pi_1 0\>T(+) \\
& = & p\<\p0\ell, 0\>T(+) \\
& = & p\<p0T(0),0\>T(+) \\
& = & p0
\end{eqnarray*}
Recalling that $R = \<K,p\>$, we can also calculate
\begin{eqnarray*}
&   & RHRH \\
& = & RH\<K,p\>H \\
& = & R\<HK,Hp\>H \\
& = & \<K,p\>\<\pi_1 p0,\pi_1\>H \\
& = & \<pp0,p\>H \\
& = & p0 \mbox{ (since $H$ is linear).}
\end{eqnarray*}
We now calculate the desired quantity:
\begin{eqnarray*}
&   & FF \\
& = & (RH - Uv)(RH - Uv) \\
& = & RHRH - UvRH - RHUv + UvUv \mbox{ (all morphisms are additive)} \\
& = & p0 - UH - Rv + p0 \mbox{ (by above and lemma \ref{lemmaFinslerProps})} \\
& = & -UH - Rv \\
& = & -1
\end{eqnarray*}
by the definition of a connection.
\end{proof}

\newpage


\end{document}